\algnewcommand{\Inputs}[1]{%
  \State \textbf{Inputs:}
  \Statex \hspace*{\algorithmicindent}\parbox[t]{.8\linewidth}{\raggedright #1}
}
\algnewcommand{\Initialize}[1]{%
  \State \textbf{Initialization:}
  \Statex \hspace*{\algorithmicindent}\parbox[t]{.8\linewidth}{\raggedright #1}
}
\DeclareMathOperator{\argmin}{argmin}
\DeclareMathOperator{\st}{s.t.}
\DeclareMathOperator{\tr}{tr}
\algnewcommand{\algorithmicand}{\textbf{ and }}
\algnewcommand{\algorithmicor}{\textbf{ or }}
\algnewcommand{\OR}{\algorithmicor}
\algnewcommand{\AND}{\algorithmicand}
\algnewcommand{\var}{\texttt}
\newcommand\scalemath[2]{\scalebox{#1}{\mbox{\ensuremath{\displaystyle #2}}}}
\begin{document}

\title{The exact worst-case convergence rate of the alternating direction method of multipliers\thanks{This work was supported by the
 Dutch Scientific Council (NWO)  grant OCENW.GROOT.2019.015, \emph{Optimization for and with Machine Learning (OPTIMAL)}.}}

\titlerunning{Convergence rate of ADMM}        

\author{ Moslem Zamani
   \and Hadi Abbaszadehpeivasti
     \and
   Etienne de Klerk}


\institute{
M. Zamani\at Tilburg University, Department of Econometrics and Operations Research, Tilburg, The Netherlands\\
\email{m.zamani\_1@tilburguniversity.edu}
\and
H. Abbaszadehpeivasti\at Tilburg University, Department of Econometrics and Operations Research, Tilburg, The Netherlands\\
\email{h.abbaszadehpeivasti@tilburguniversity.edu}
\and
E. de Klerk\at Tilburg University, Department of Econometrics and Operations Research, Tilburg, The Netherlands\\
\email{e.deklerk@tilburguniversity.edu}
}

\date{Received: date / Accepted: date}

\maketitle

\begin{abstract}
Recently, semidefinite programming performance estimation has been employed as a strong tool for the worst-case performance analysis of first order methods. In this paper, we derive new non-ergodic convergence rates for the alternating direction method of multipliers (ADMM) by using performance estimation. We give some examples which show the exactness of the given bounds. We also study the linear and R-linear convergence of ADMM. We establish that ADMM enjoys a global linear convergence rate if and only if the dual objective satisfies the  Polyak-\L ojasiewicz (P\L) inequality in the presence of strong convexity. In addition, we give an explicit formula for the linear convergence rate factor. Moreover, we study the R-linear convergence of ADMM under two new scenarios.
\keywords{Alternating direction method of multipliers (ADMM) \and Performance estimation  \and Convergence rate \and P\L\ inequality}
\subclass{90C22 \and 90C25 \and 65K15}
\end{abstract}

\section{Introduction}\label{intro}

We consider the optimization problem
\begin{align}\label{P}
\min_{(x, z)\in \mathbb{R}^n\times\mathbb{R}^m} f(x)+g(z),\\
\nonumber \st \ Ax+Bz=b,
\end{align}
where $f: \mathbb{R}^n\to \mathbb{R}\cup \{\infty\}$ and $g: \mathbb{R}^m\to \mathbb{R}\cup \{\infty\}$ are closed proper convex functions, $0\neq A\in\mathbb{R}^{r\times n}$, $0\neq B\in\mathbb{R}^{r\times m}$ and $b\in\mathbb{R}^{r}$. Moreover, we assume that $(x^\star, z^\star)$ is an optimal solution of problem \eqref{P} and $\lambda^\star$ is its corresponding Lagrange multipliers. Moreover, we denote the value of $f$ and $g$ at $x^\star$ and $z^\star$ with $f^\star$ and $g^\star$, respectively.

Problem \eqref{P} appears naturally (or after variable splitting) in many applications in statistics, machine learning and image processing to name but a few \cite{boyd2011, rudin1992nonlinear, hastie2015statistical, linalternating}. The most common method for solving problem \eqref{P} is the alternating direction method of multipliers (ADMM).  ADMM is dual based approach that exploits separable structure and it may be described as follows.
\begin{algorithm}
\caption{ADMM}
\begin{algorithmic}
\State Set $N$ and $t>0$ (step length), pick $\lambda^0, z^0$.
\State For $k=1, 2, \ldots, N$ perform the following step:\\
\begin{enumerate}
\item
$x^{k}\in\argmin f(x)+\langle\lambda^{k-1},Ax\rangle+\tfrac{t}{2}\|Ax+Bz^{k-1}-b\|^2$
\item
$z^{k}\in\argmin g(z)+\langle\lambda^{k-1},Bz\rangle+\tfrac{t}{2}\|Ax^k+Bz-b\|^2$
\item
$\lambda^k=\lambda^{k-1}+t(Ax^k+Bz^k-b)$.
\end{enumerate}
\end{algorithmic}
\label{ADMM}
\end{algorithm}

ADMM was first proposed in \cite{gabay1976dual, glowinski1975approximation} for solving nonlinear variational problems. We refer the interested reader to \cite{glowinski2017splitting} for a historical review of ADMM. The popularity of ADMM is due to its capability to be implemented  parallelly and hence can handle large-scale problems  \cite{boyd2011, han2022survey, madani2015admm, stellato2020osqp}. For example, it is used for solving inverse problems governed by partial differential equation forward models \cite{lozenski2021consensus}, and distributed energy resource coordinations \cite{liu2022distributed}, to mention but a few.

 The convergence of ADMM has been investigated extensively in the literature and there exist many convergence results. However, different performance measures have been used for the computation of convergence rate; see \cite{franca2018admm, goldstein2014fast, sabach2022faster, goldfarb2013fast, he20121, monteiro2013iteration, linalternating, li2019accelerated}. In this paper, we consider the dual objective value as a performance measure.

Throughout the paper, we assume that each subproblem in steps 1 and 2 of Algorithm \ref{ADMM} attains its minimum.  The Lagrangian function of problem \eqref{P} may be written as
\begin{align}\label{lag}
L(x, z, \lambda)= f(x)+g(z)+\langle \lambda, Ax+Bz-b\rangle,
\end{align}
and the dual objective of problem \eqref{P} is also defined as
\begin{align*}
\nonumber D(\lambda)&=\min_{(x, z)\in \mathbb{R}^n\times\mathbb{R}^m} f(x)+g(z)+\langle \lambda, Ax+Bz-b\rangle.
\end{align*}
 We assume throughout the paper that strong duality holds for problem \eqref{P}, that is
$$
\max_{\lambda\in\mathbb{R}^r} D(\lambda)=\min_{Ax+Bz=b} f(x)+g(z).
$$
Note that we have strong duality when both functions $f$ and $g$ are real-valued. For extended convex functions, strong duality holds under some mild conditions; see e.g. \cite[Chapter 15]{beck2017first}.

Some common performance measures for the analysis of ADMM are as follows,
\begin{itemize}
\item
Objective value: $\left|f(x^N)+g(z^N)-f^\star-g^\star\right|$;
\item
Primal and dual feasibility: $\left\|Ax^N+Bz^N-b\right\|$ and $\left\|A^TB(z^N-z^{N-1})\right\|$;
\item
Dual objective value: $D(\lambda^\star)-D(\lambda^N)$;
\item
Distance between $(x^N, z^N, \lambda^N)$ and a saddle points of problem \eqref{lag}.
\end{itemize}
Note that the mathematical expressions are written in a non-ergodic sense for convenience.  Each measure is useful in monitoring the progress and convergence of ADMM. The objective value is the most commonly used performance measure for the analysis of algorithms in convex optimization \cite{bertsekas2015convex, beck2017first, Nesterov}. As mentioned earlier, ADMM is a dual based method and it may be interpreted as a proximal method applied to the dual problem; see \cite{bertsekas2015convex, linalternating} for further discussions and insights. Thus, a natural performance measure for ADMM would be dual objective value.  In this study, we investigate the convergence rate of ADMM in terms of  dual objective value and feasibility. It worth noting that most performance measures may be analyzed through the framework developed in Section \ref{sec_pep}.

Regarding dual objective value, the following convergence rate is known in the literature. This theorem holds for strongly convex functions $f$ and $g$; recall that $f$ is called strongly convex with modulus $\mu\geq0$ if the function $f-\tfrac{\mu}{2}\|\cdot\|^2$ is convex.
 \begin{theorem}\cite[Theorem 1]{goldstein2014fast}\label{TT1}
Let $f$ and $g$ be strongly convex with moduli $\mu_1>0$ and $\mu_2>0$, respectively. If $t\leq \sqrt[3]{\frac{\mu_1\mu_2^2}{\lambda_{\max} (A^TA)\lambda_{\max}^2 (B^TB)}}$, then
\begin{align}\label{B1}
D(\lambda^\star)-D(\lambda^N)\leq \frac{\|\lambda^1-\lambda^\star\|^2}{2t(N-1)}.
\end{align}
\end{theorem}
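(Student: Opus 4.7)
The plan is to exploit the fact that ADMM is fundamentally a method operating on the dual problem. Under strong convexity of both $f$ and $g$ with moduli $\mu_1,\mu_2>0$, the dual $-D$ is convex and $\nabla(-D)$ is Lipschitz with a constant controlled by $\lambda_{\max}(A^TA)/\mu_1$ and $\lambda_{\max}(B^TB)/\mu_2$, because the argmin maps $x(\lambda)=\argmin f(x)+\langle A^T\lambda,x\rangle$ and $z(\lambda)=\argmin g(z)+\langle B^T\lambda,z\rangle$ are Lipschitz in $\lambda$. This smoothness is what makes an $O(1/N)$ dual bound plausible.

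First I would write down the optimality conditions for the two inner minimizations. Step 1 of Algorithm \ref{ADMM} yields $-A^T\tilde\lambda^{k}\in\partial f(x^k)$ with the shifted multiplier $\tilde\lambda^k=\lambda^{k-1}+t(Ax^k+Bz^{k-1}-b)$, whereas step 2 yields $-B^T\lambda^{k}\in\partial g(z^k)$, and by definition $\lambda^k=\tilde\lambda^k+tB(z^k-z^{k-1})$. Using the subgradient inequality for $f$ and $g$ at $(x^k,z^k)$ against $(x^\star,z^\star)$, together with feasibility $Ax^\star+Bz^\star=b$, I would derive the one-step relation
\begin{equation*}
\|\lambda^{k+1}-\lambda^\star\|^2\leq \|\lambda^{k}-\lambda^\star\|^2-2t\bigl(D(\lambda^\star)-D(\lambda^{k+1})\bigr)+R_k,
\end{equation*}
where $R_k$ is a cross term of the form $2t\langle\lambda^{k+1}-\lambda^\star,\,B(z^{k+1}-z^{k})\rangle$ arising from the asymmetric use of $z^{k-1}$ in the $x$-update. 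This is the dual analogue of the standard proximal-gradient descent lemma.

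The main obstacle is showing that the residual $R_k$ can be absorbed by proving the monotonicity $D(\lambda^{k+1})\geq D(\lambda^{k})$ under the stated step-size restriction, and this is exactly where the unusual cubic condition on $t$ enters. Using the Lipschitz bounds on the argmin maps one gets $\|z^{k+1}-z^k\|\lesssim (\lambda_{\max}(B^TB)/\mu_2)\,\|\lambda^{k+1}-\lambda^{k}\|$, and similarly $\|x^{k+1}-x^k\|$ can be controlled by $\|B(z^{k+1}-z^k)\|$ through $\lambda_{\max}(A^TA)/\mu_1$. Chaining these two smoothness estimates introduces three successive factors of $t$ and yields an inequality of degree three in $t$; imposing that this inequality ensures $D(\lambda^{k+1})\geq D(\lambda^{k})$ collapses to $t^3\,\lambda_{\max}(A^TA)\lambda_{\max}^2(B^TB)\leq \mu_1\mu_2^2$, which is precisely the hypothesis.

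Once monotonicity is in place, the residual $R_k$ can be bounded so that the recursion simplifies to $\|\lambda^{k+1}-\lambda^\star\|^2+2t\bigl(D(\lambda^\star)-D(\lambda^{k+1})\bigr)\leq\|\lambda^{k}-\lambda^\star\|^2$. Telescoping from $k=1$ to $N-1$ and using $D(\lambda^{k})\leq D(\lambda^{N})$ for every $k\leq N$ produces
\begin{equation*}
2t(N-1)\bigl(D(\lambda^\star)-D(\lambda^{N})\bigr)\leq \|\lambda^{1}-\lambda^\star\|^2,
\end{equation*}
which is the bound \eqref{B1}. Apart from the monotonicity step, everything is standard fixed-point reasoning on the dual sequence.
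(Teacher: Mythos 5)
First, note that the paper does not prove this statement: Theorem \ref{TT1} is imported verbatim from \cite[Theorem 1]{goldstein2014fast} and is stated only as a point of comparison for Theorem \ref{Th.M}, so there is no internal proof to measure your attempt against. That said, your outline does reproduce the architecture of the proof in the cited source: interpret ADMM as an approximate ascent scheme on the dual, derive a one-step inequality relating $\|\lambda^{k+1}-\lambda^\star\|^2$ to $\|\lambda^{k}-\lambda^\star\|^2$ and the dual gap, establish the monotonicity $D(\lambda^{k+1})\geq D(\lambda^{k})$ under the cubic step-size restriction, and telescope, using that monotonicity to replace each $D(\lambda^\star)-D(\lambda^{k})$ by $D(\lambda^\star)-D(\lambda^{N})$. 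The telescoping endgame and the appearance of $\lambda^1$ rather than $\lambda^0$ in the constant are handled correctly.

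The genuine gap is in the middle. You assert that once $D(\lambda^{k+1})\geq D(\lambda^{k})$ is known, the residual $R_k=2t\langle\lambda^{k+1}-\lambda^\star,\,B(z^{k+1}-z^{k})\rangle$ ``can be absorbed,'' but monotonicity of the scalar sequence $\{D(\lambda^{k})\}$ gives no control over this inner product: it contains the a priori unbounded factor $\lambda^{k+1}-\lambda^\star$, and a Cauchy--Schwarz bound on it would destroy the telescoping. In the actual argument the cancellation comes from elsewhere --- from the quadratic terms $-\tfrac{\mu_1}{2}\|\cdot\|^2$ and $-\tfrac{\mu_2}{2}\|\cdot\|^2$ supplied by strong convexity in the subgradient inequalities, equivalently from evaluating the supergradient of $D$ at the shifted multiplier $\tilde\lambda^{k+1}$ and tracking the discrepancy $A(\hat x^{k+1}-x^{k+1})$ exactly --- and making this work is where most of the labour in \cite{goldstein2014fast} lies. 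Likewise, your derivation of the cubic threshold is only a degree count on chained Lipschitz constants of the argmin maps; it correctly predicts the exponents of $\mu_1$, $\mu_2$, $\lambda_{\max}(A^TA)$ and $\lambda_{\max}(B^TB)$, but it does not establish that $D(\lambda^{k+1})\geq D(\lambda^{k})$ holds at exactly that threshold. As written, the proposal is a faithful road map of the known proof rather than a proof.
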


In this study we establish that Algorithm \ref{ADMM} has the convergence rate of $O(\tfrac{1}{N})$ in terms of dual objective value without assuming the strong convexity of $g$. Under this setting, we also prove that Algorithm \ref{ADMM} has the convergence rate of $O(\tfrac{1}{N})$ in terms of primal and dual residuals. Moreover, we  show that the given bounds are exact. Furthermore, we study the linear and R-linear convergence.

\subsubsection*{Outline of our paper}
Our paper is structured as follows. We present the semidefinite programming (SDP) performance estimation method in Section \ref{sec_pep}, and we develop the performance estimation to handle dual based methods including ADMM. In Section \ref{sec_f}, we derive some new non-asymptotic convergence rates by using performance estimation for ADMM in terms of dual function, primal and dual residuals. Furthermore, we show that the given bounds are tight by providing some examples. In Section \ref{sec_L} we proceed with the study of the linear convergence of ADMM. We establish that ADMM enjoys a linear convergence if and only if the dual function satisfies the P\L\ inequality when the objective function is strongly convex. Furthermore, we investigate the relation between the P\L\ inequality and common conditions used by scholars to prove the linear convergence. Section \ref{Sec.R} is devoted to the R-linear convergence. We prove that ADMM is  R-linear convergent under two new scenarios which are weaker than the existing ones in the literature.
\subsubsection*{Terminology and notation}
In this subsection we review some definitions and concepts from convex analysis. The interested reader is referred to the classical text by Rockafellar \cite{rockafellar2015convex} for more information.  The $n$-dimensional Euclidean space is denoted by $\mathbb{R}^n$.
 We use $\langle \cdot, \cdot\rangle$ and $\| \cdot\|$ to denote the Euclidean inner product and norm, respectively. The column vector $e_i$ represents the $i$-th standard unit vector and $I$ stands for the identity matrix.
  For a matrix $A$, $A_{i, j}$ denotes its $(i, j)$-th entry,
  and $A^T$ represents the transpose of $A$. The  notation $A\succeq 0$ means the matrix $A$ is symmetric positive semidefinite. We use $\lambda_{\max} (A)$ and $\lambda_{\min} (A)$ to denote  the largest and the smallest eigenvalue of symmetric matrix $A$, respectively. Moreover, the seminorm $\| \cdot\|_A$ is defined as $\|x\|_A=\|Ax\|$ for any $A\in\mathbb{R}^{m\times n}$.

 Suppose that $f:\mathbb{R}^n\to(-\infty, \infty]$ is an extended convex function.
The function $f$ is called closed if its epi-graph is closed,
 that is $\{(x, r): f(x)\leq r\}$ is a closed subset of $\mathbb{R}^{n+1}$. The function $f$ is said to be proper if there exists $x\in\mathbb{R}^n$ with $f(x)<\infty$.
 We denote the set of  proper and closed convex functions  on $\mathbb{R}^n$ by $\mathcal{F}_{0}(\mathbb{R}^n)$.
 The subgradients of $f$ at $x$ is denoted and defined as
$$
\partial f(x)=\{\xi: f(y)\geq f(x)+\langle \xi, y-x\rangle, \forall y\in\mathbb{R}^n\}.
$$
We call a differentiable function $f$ $L$-smooth if for any $x_1, x_2\in\mathbb{R}^n$,
$$
\left\|\nabla f(x_1)-\nabla f(x_2) \right\|\leq L\|x_1-x_2\| \ \ \forall x_1, x_2\in\mathbb{R}^n.
$$

\begin{definition}
Let $f:\mathbb{R}^n\to(-\infty, \infty]$ be a closed proper function and let $A\in\mathbb{R}^{m\times n}$. We say $f$ is $c$-strongly convex relative to $\| .\|_A$ if the function
$f-\tfrac{c}{2} \| . \|_A^2$ is convex.
\end{definition}

 In the rest of the section, we assume that $A\in\mathbb{R}^{m\times n}$. It is seen that any $\mu$-strongly convex function is $\tfrac{\mu}{\lambda_{\max}(A^TA)}$-strongly convex relative to $\| .\|_A$. However, its converse does not necessarily hold unless $A$ has full column rank. Hence, the assumption of strong convexity relative to $\| .\|_A$ for a given matrix $A$ is weaker compared to the assumption of strong convexity. For further details on the strong convexity in relation to a given function, we refer the reader to \cite{lu2018relatively}.
We denote the set of  $c$-strongly convex functions relative to $\| .\|_A$ on $\mathbb{R}^n$ by $\mathcal{F}_{c}^A(\mathbb{R}^n)$. We denote the distance function to the set $X$ by $d_X(x):=\inf_{y\in X}\|y-x\|$.

In the following sections we derive some new convergence rates for ADMM by using performance estimation. The main idea of  performance estimation is based on  interpolablity.
Let $\mathcal{I}$ be an index set and let $\{(x^i; g^i; f^i)\}_{i\in \mathcal{I}}\subseteq \mathbb{R}^n\times \mathbb{R}^n\times \mathbb{R}$.
A set $\{(x^i; \xi^i; f^i)\}_{i\in \mathcal{I}}$ is called $\mathcal{F}^A_{c}$-interpolable if there exists $f\in\mathcal{F}^A_{c}(\mathbb{R}^n)$
 with
$$
f(x^i)=f^i, \ \xi^i\in\partial f(x^i) \ \ i\in\mathcal{I}.
$$
The next theorem gives necessary and sufficient conditions for $\mathcal{F}_{c}^A$-interpolablity.
\begin{theorem}\label{T1}
Let $c\in [0, \infty)$ and let $\mathcal{I}$ be an index set.
The set $\{(x^i; \xi^i; f^i)\}_{i\in \mathcal{I}}\subseteq \mathbb{R}^n\times \mathbb{R}^n \times \mathbb{R}$ is $\mathcal{F}^A_{c}$-interpolable if and only if for any $i, j\in\mathcal{I}$, we have
\begin{align}\label{interp}
\tfrac{c}{2}\left\|x^i-x^j\right\|_A^2\leq f^i-f^j-\left\langle \xi^j, x^i-x^j\right\rangle.
\end{align}
Moreover, $\mathcal{F}_{0}$-interpolable and $L$-smooth if and only if for any $i, j\in\mathcal{I}$, we have
\begin{align}
\tfrac{1}{2L}\left\|g^i-g^j\right\|^2\leq f^i-f^j-\left\langle g^j, x^i-x^j\right\rangle.
\end{align}
\end{theorem}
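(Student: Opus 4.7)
The plan is to handle the two equivalences separately: first the strongly-convex characterization by a direct argument, then reduce the smooth convex case to it by Fenchel conjugation. For the necessity direction of the first part, I would start from the definition that $h := f - \tfrac{c}{2}\|\cdot\|_A^2$ is convex. Since the quadratic is smooth with gradient $cA^TAx$, subdifferential calculus gives $\xi^i - cA^TAx^i \in \partial h(x^i)$ whenever $\xi^i \in \partial f(x^i)$. Applying the subgradient inequality for $h$ at the pair $(x^j,x^i)$ and simplifying the cross-term via the identity $\|x^i\|_A^2+\|x^j\|_A^2 - 2\langle A^TAx^j,x^i\rangle = \|x^i-x^j\|_A^2$ reproduces (\ref{interp}).

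For sufficiency, I would exhibit the explicit interpolant
\begin{equation*}
f(x) := \tfrac{c}{2}\|x\|_A^2 \;+\; \max_{i\in\mathcal{I}}\Bigl[\, f^i - \tfrac{c}{2}\|x^i\|_A^2 + \bigl\langle \xi^i - cA^TAx^i,\; x-x^i\bigr\rangle \,\Bigr].
\end{equation*}
The second term is a pointwise maximum of affine functions, hence convex, closed and proper, so $f - \tfrac{c}{2}\|\cdot\|_A^2$ is convex and $f \in \mathcal{F}_c^A(\mathbb{R}^n)$. Evaluating at $x^i$ and using the same quadratic identity, (\ref{interp}) translates exactly into the statement that the $i$-th affine term dominates every other term of the max, which yields $f(x^i)=f^i$. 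Because that term is active at $x^i$, its slope $\xi^i - cA^TAx^i$ belongs to the subdifferential of the max at $x^i$, and adding the gradient $cA^TAx^i$ of the quadratic gives $\xi^i \in \partial f(x^i)$, as required. The one delicate step here, and the main obstacle, is carrying out the quadratic bookkeeping so that the max-dominance condition matches (\ref{interp}) on the nose for every ordered pair $(i,j)$; this is just the identity above, but it must be applied in the right direction.

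For the smooth convex part I would reduce to the first part through Fenchel conjugation. A standard fact is that for $f \in \mathcal{F}_0(\mathbb{R}^n)$, $f$ is $L$-smooth with $g^i = \nabla f(x^i)$ if and only if $f^*$ is $\tfrac{1}{L}$-strongly convex, with $x^i \in \partial f^*(g^i)$ and $f^*(g^i) = \langle g^i,x^i\rangle - f^i$. Applying the first part of the theorem to the dual data $\{(g^i;\, x^i;\, \langle g^i,x^i\rangle - f^i)\}_{i\in\mathcal{I}}$ with $A=I$ and $c=1/L$ turns inequality (\ref{interp}) into exactly the stated smooth-convex inequality, giving necessity. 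Conversely, the stated inequality lets the first part produce a $\tfrac{1}{L}$-strongly convex interpolant $\phi$ of the dual data, and $f := \phi^*$ is then the desired $L$-smooth convex interpolant: Fenchel--Young recovers the values $f(x^i)=f^i$, and the reflexivity of the subdifferential relation on $\mathcal{F}_0$ together with smoothness of $\phi^*$ yields $\nabla f(x^i)=g^i$.
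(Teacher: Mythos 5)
Your proof is correct, and at the top level it follows the same reduction as the paper: both arguments observe that $\{(x^i;\xi^i;f^i)\}$ is $\mathcal{F}^A_c$-interpolable exactly when the shifted data $\{(x^i;\,\xi^i-cA^TAx^i;\,f^i-\tfrac{c}{2}\|x^i\|_A^2)\}$ is $\mathcal{F}_0$-interpolable, and that this shift turns the plain convex interpolation inequality into \eqref{interp}. The difference is one of self-containedness: the paper stops there and invokes Theorems 1 and 4 of Taylor, Hendrickx and Glineur as black boxes, whereas you re-derive both ingredients --- the sufficiency via the explicit interpolant $\tfrac{c}{2}\|x\|_A^2+\sup_{i}\bigl[f^i-\tfrac{c}{2}\|x^i\|_A^2+\langle \xi^i-cA^TAx^i,\,x-x^i\rangle\bigr]$ (which is precisely the construction underlying the cited Theorem 1), and the smooth part via Fenchel conjugation of the dual data $\{(g^i;x^i;\langle g^i,x^i\rangle-f^i)\}$ with $c=1/L$, $A=I$ (which is how the cited Theorem 4 is proved). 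Two small points of care if you write this up: for an infinite index set $\mathcal{I}$ you should use a supremum of the affine minorants (properness at each $x^i$ is still guaranteed by \eqref{interp}, and closedness is automatic for a sup of affine functions); and in the conjugation step the strongly convex interpolation inequality for the dual data comes out with the roles of $i$ and $j$ swapped relative to the stated smooth inequality, which is harmless only because the condition is imposed on all ordered pairs. Your version buys a proof that does not lean on external interpolation theorems; the paper's buys brevity.
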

\begin{proof}
 The argument is analogous to that of \cite[Theorem 4]{taylor2017smooth}. The triple $\{(x^i; \xi^i; f^i)\}_{i\in \mathcal{I}}$ is $\mathcal{F}^A_{c}$-interpolable if and only if
 the triple $\{(x^i; \xi^i-cA^TAx^i; f^i-\tfrac{c}{2}\|x^i\|_A^2)\}_{i\in \mathcal{I}}$ is $\mathcal{F}_{0}$-interpolable. By \cite[Theorem 1]{taylor2017smooth},  $\{(x^i; \xi^i-cA^TAx^i; f^i-\tfrac{c}{2}\|x^i\|_A^2)\}_{i\in \mathcal{I}}$ is $\mathcal{F}_{0}$-interpolable if and only if
 $$
  f^i-\tfrac{c}{2}\left\|x^i \right\|_A^2\geq  f^j-\tfrac{c}{2}\left\|x^j \right\|_A^2-\left\langle \xi^j-cA^TAx^j, x^i-x^j\right\rangle
 $$
 which implies inequality \eqref{interp}. The second part  follows directly from \cite[Theorem 4]{taylor2017smooth}.
\end{proof}
Note  that any convex function is $0$-strongly convex relative to $A$. Let $f\in\mathcal{F}_{0}(\mathbb{R}^n)$. The conjugate function $f^*:\mathbb{R}^n\to (-\infty, \infty]$ is defined as $f^*(y)=\sup_{x\in\mathbb{R}^n} \langle y, x\rangle-f(x)$. We have the following identity
\begin{align}\label{Conj_inv}
  \xi\in\partial f(x) \  \ \Leftrightarrow \ \ x\in\partial f^*(\xi).
\end{align}
Let $f\in\mathcal{F}_{0}(\mathbb{R}^n)$ be $\mu$-strongly convex. The function $f$ is $\mu$-strongly convex if and only if $f^*$ is $\tfrac{1}{\mu}$-smooth. Moreover, $(f^*)^*=f$.

By using conjugate functions, the dual of problem \eqref{P} may be written as
\begin{align}\label{Con_C}
\nonumber D(\lambda)&=\min_{(x, z)\in \mathbb{R}^n\times\mathbb{R}^m} f(x)+g(z)+\langle \lambda, Ax+Bz-b\rangle\\
& =-\langle \lambda, b\rangle-f^*(-A^T\lambda)-g^*(-B^T\lambda).
\end{align}
By the optimality conditions for the dual problem, we get
\begin{align}\label{Con_opt_D}
b-Ax^\star-Bz^\star=0,
\end{align}
for some $ x^\star\in\partial f^*(-A^T\lambda^\star)$ and $ z^\star\in\partial g^*(-B^T\lambda^\star)$. Equation \eqref{Con_opt_D} with \eqref{Conj_inv} imply that $( x^\star, z^\star)$ is an optimal solution to problem \eqref{P}.

The optimality conditions for the subproblems of Algorithm \ref{ADMM} may be written as
\begin{align}\label{OPT}
\nonumber & 0\in \partial f(x^k)+A^T\lambda^{k-1}+tA^T\left(Ax^k+Bz^{k-1}-b\right),\\
& 0\in \partial g(z^k)+B^T\lambda^{k-1}+tB^T\left(Ax^k+Bz^{k}-b\right).
\end{align}
As  $\lambda^k=\lambda^{k-1}+t(Ax^k+Bz^k-b)$, we get
\begin{align}\label{OPT_R}
0\in \partial f(x^k)+A^T\lambda^k+tA^TB\left(z^{k-1}-z^k\right), \ \ 0\in \partial g(z^k)+B^T\lambda^k.
\end{align}
So, $(x^k, z^k)$ is optimal for dual objective at $\lambda ^k$ if and only if $A^TB\left(z^{k-1}-z^k\right)=0$. We call $A^TB\left(z^{k-1}-z^k\right)$ dual residual.

\section{Performance estimation}\label{sec_pep}
In this section, we develop the performance estimation for ADMM. The performance estimation method introduced by Drori and Teboulle \cite{drori2014performance} is an SDP-based method for the analysis of first order methods. Since then, many scholars employed this strong tool to derive the worst case convergence rate of different iterative methods; see \cite{ryu2020operator, kim2016optimized, abbaszadehpeivasti2021rate, taylor2017smooth} and the references therein. Moreover, Gu and Yang \cite{gu2020dual} employed performance estimation to study the extension of the dual step length for ADMM. Note that while there are some similarities between our work and \cite{gu2020dual} in using performance estimation, the formulations and results are different.

The worst-case convergence rate of Algorithm \ref{ADMM} with respect to dual objective value may be cast as the following abstract optimization problem,
\begin{align}\label{P1}
\nonumber   \max & \ D(\lambda^\star)-D(\lambda^N)\\
\nonumber \st &  \  \{x^k, z^k, \lambda^k\}_1^N \ \textrm{is generated  by Algorithm \ref{ADMM} w.r.t.}\ f, g, A, B, b, \lambda^0,  z^0, t  \\
\nonumber  & \ (x^\star, z^\star) \ \text{is an optimal solution with Lagrangian multipliers }\ \lambda^\star\\
\nonumber  & \  \|\lambda^0-\lambda^\star \|^2+t^2\left\| z^0-z^\star\right\|_B^2=\Delta\\
  & \ f\in \mathcal{F}^A_{c_1}(\mathbb{R}^n), g\in \mathcal{F}^B_{c_2}(\mathbb{R}^m)\\
\nonumber  & \ \lambda^0\in\mathbb{R}^r,  z^0\in\mathbb{R}^m, A\in\mathbb{R}^{r\times n}, B\in\mathbb{R}^{r\times m},b\in\mathbb{R}^{r},
\end{align}
where $f, g, A, B, b,  z^0, \lambda^0, x^\star, z^\star, \lambda^\star$ are decision variables and $N, t, c_1, c_2, \Delta$ are the given parameters.

By using Theorem \ref{T1} and the optimality conditions \eqref{OPT}, problem \eqref{P1} may be reformulated as the finite dimensional optimization problem,
\begin{align}\label{P2}
\nonumber   \max & \ D(\lambda^\star)-D(\lambda^N)\\
\nonumber \st &  \  \{(x^k; \xi^k; f^k)\}_1^{N}\cup\{(x^\star; \xi^\star; f^\star)\} \ \textrm{satisfy interpolation constraints \eqref{interp}}  \\
\nonumber & \{(z^k;  \eta^k; g^k)\}_0^N\cup\{(z^\star; \eta^\star; g^\star)\} \ \textrm{satisfy interpolation constraints \eqref{interp}}  \\
\nonumber  & \ (x^\star, z^\star) \ \text{is an optimal solution with Lagrangian multipliers }\ \lambda^\star\\
  & \  \|\lambda^0-\lambda^\star \|^2+t^2\left\| z^0-z^\star\right\|_B^2= \Delta\\
\nonumber  & \ \xi^k=tA^Tb-tA^TAx^k-tA^TBz^{k-1}-A^T\lambda^{k-1}, \ \ k\in\{1, ..., N\}\\
\nonumber  & \ \eta^k=tB^Tb-tB^TAx^k-tB^TBz^{k}-B^T\lambda^{k-1}, \ \ k\in\{1, ..., N\}\\
\nonumber  & \ \lambda^k=\lambda^{k-1}+t(Ax^k+Bz^{k}-b), \ \ k\in\{1, ..., N\}\\
\nonumber  & \ \lambda^0\in\mathbb{R}^r,  z^0\in\mathbb{R}^m, A\in\mathbb{R}^{r\times n}, B\in\mathbb{R}^{r\times m},b\in\mathbb{R}^{r}.
\end{align}

To handle problem \eqref{P2}, without loss of generality, we assume that the matrix $\begin{pmatrix} A & B \end{pmatrix}$ has full row rank. Note this assumption does not employed in our arguments in the following sections. In addition, we introduce some new variables. As problem \eqref{P} is invariant under translation of $(x,z)$, we may assume without loss of generality that $b=0$ and $(x^\star, z^\star)=(0, 0)$.  In addition, due to the full row rank of the matrix $\begin{pmatrix} A & B \end{pmatrix}$, we may assume that $\lambda^0=\begin{pmatrix} A & B \end{pmatrix} \begin{pmatrix} x^\dag \\   z^\dag \end{pmatrix}$ and $\lambda^\star=\begin{pmatrix} A & B \end{pmatrix} \begin{pmatrix} \bar x \\ \bar z \end{pmatrix}$ for some $\bar x,  x^\dag, \bar z,  z^\dag$. So,
$$
\xi^\star=-A^TA\bar x-A^TB\bar z\in \partial f(0), \ \ \eta^\star=-B^TA\bar x-B^TB\bar z\in \partial g(0),
$$
and $D(\lambda^\star)=f^\star+g^\star$.

By using equality constraints of problem \eqref{P2} and the newly introduced  variables, we have for $k\in\{1, ..., N\}$

{\small{
\begin{align}\label{Eqq}
 & \lambda^k=(Ax^\dag+Bz^\dag)+\sum_{i=1}^{k} t(Ax^i+Bz^i),\\
\nonumber & -(A^TAx^\dag+A^TBz^\dag)-\sum_{i=1}^{k-1} t(A^TAx^i+A^TBz^i)-tA^TAx^k-tA^TBz^{k-1}\in \partial f(x^k),\\
\nonumber & -(B^TAx^\dag+B^TBz^\dag)-\sum_{i=1}^{k} t(B^TAx^i+B^TBz^i)\in\partial g(z^k).
\end{align}
}}

Note that $(\tilde x, \tilde z)\in\argmin f(x)+g(z)+\langle \lambda^N, Ax+Bz-b\rangle$ if and only if
\begin{align}\label{opt_lam_N}
0\in \partial f(\tilde x)+A^T\lambda^N, \ \ \  0\in \partial g(\tilde z)+B^T\lambda^N.
\end{align}
It is worth noting that a point $\tilde{x}$ satisfying these conditions exists, as  function $f$ is strongly convex relative to $A$. In addition, one may consider $\tilde z=z^N$ by virtue of \eqref{OPT_R}.  For the sake of notation convenience, we introduce $x^{N+1}=\tilde x$. The reader should bear in mind that $x^{N+1}$ is not generated by Algorithm \ref{ADMM}. Therefore,  $D(\lambda^N)=f(x^{N+1})+g(z^N)+\left\langle \lambda ^N, Ax^{N+1}+Bz^N\right\rangle$ for some $x^{N+1}$ with
 $-A^T\lambda^N\in\partial f(x^{N+1})$.
Hence, problem \eqref{P2} may be written as

{\small{
\begin{align}\label{P3}
\nonumber   \max \ \ & f^\star+g^\star-f^{N+1}-g^{N}-\left\langle Ax^\dag+Bz^\dag+\sum_{i=1}^{N} t(Ax^i+Bz^i), Ax^{N+1}+Bz^{N}\right\rangle \\
\nonumber  \st \ \  &  \  \tfrac{c_1}{2}\left\|x^k-x^j\right\|_A^2\leq \left\langle Ax^\dag+Bz^\dag+\sum_{i=1}^{k-1} t(Ax^i+Bz^i)+tAx^k+tBz^{k-1}, A(x^j-x^k) \right\rangle+\\
\nonumber  &  \  \  \  \ \  \  \  \ f^j-f^k, \ \ k\in\{1, \dots, N\},\ \ \ j\in\{1, \dots, N+1\},\\
\nonumber  &  \  \tfrac{c_1}{2}\left\|x^{N+1}-x^j\right\|_A^2\leq \left\langle Ax^\dag+Bz^\dag+\sum_{i=1}^{N} t(Ax^i+Bz^i), A\left(x^j-x^{N+1}\right) \right\rangle+\\
\nonumber  &  \  \  \  \ \  \  \  \  f^j-f^{N+1}, \ \ \  j\in\{1, \dots, N\},\\
\nonumber  &  \  \tfrac{c_2}{2}\left\|z^k-z^j\right\|_B^2\leq \left\langle Ax^\dag+Bz^\dag+\sum_{i=1}^{k} t(Ax^i+Bz^i), B\left(z^j-z^k\right) \right\rangle+\\
  &  \  \  \  \ \  \  \  \ g^j-g^k, \ \ \  j, k\in\{1, \dots, N\},\\
\nonumber &  \  \tfrac{c_1}{2}\left\|x^k\right\|_A^2\leq f^k-f^\star+\left\langle A\bar x+B\bar z, Ax^k \right\rangle, \ \ \  k\in\{1, \dots, N+1\},\\
\nonumber  &  \  \tfrac{c_1}{2}\left\|x^k\right\|_A^2\leq -\left\langle Ax^\dag+Bz^\dag+\sum_{i=1}^{k-1} t(Ax^i+Bz^i)+tAx^k+tBz^{k-1}, Ax^k \right\rangle+\\
\nonumber  &    \  \  \  \ \  \  \  \ f^\star-f^k, \ \ \   k\in\{1, \dots, N\},\\
\nonumber  &  \  \tfrac{c_1}{2}\left\|x^{N+1}\right\|_A^2\leq f^\star-f^{N+1}- \left\langle Ax^\dag+Bz^\dag+\sum_{i=1}^{N} t(Ax^i+Bz^i), Ax^{N+1} \right\rangle,\\
\nonumber &  \  \tfrac{c_2}{2}\left\|z^k\right\|_B^2\leq g^k-g^\star+\left\langle A\bar x+B\bar z, Bz^k \right\rangle, \ \ \  k\in\{1, \dots, N\},\\
\nonumber  &  \  \tfrac{c_2}{2}\left\|z^k\right\|_B^2\leq g^\star-g^k-\left\langle Ax^\dag+Bz^\dag+\sum_{i=1}^{k} t(Ax^i+Bz^i), Bz^k \right\rangle, \ \ \  k\in\{1, \dots, N\},\\
\nonumber  & \  \left\|Ax^\dag+Bz^\dag-(A\bar x+B\bar z)\right\|^2+t^2\left\| z^0\right\|_B^2= \Delta,\\
\nonumber  & \ x^\dag\in\mathbb{R}^n,  z^0, z^\dag\in\mathbb{R}^m, A\in\mathbb{R}^{r\times n}, B\in\mathbb{R}^{r\times m}.
\end{align}
}}

In problem \eqref{P3}, $A, B, \{x^k, f^k\}_1^{N+1}, \{z^k, g^k\}_1^{N}, x^\dag, z^\dag, \bar x, f^\star, \bar z, g^\star,  z^0$ are decision variables. By using the Gram matrix method, problem \eqref{P3} may be relaxed as a semidefinite program as follows. Let
\begin{align*}
&U=\begin{pmatrix}
    x^\dag & x^1 & \dots & x^{N+1} & \bar x
  \end{pmatrix},\ \ \
 & V=\begin{pmatrix}
    z^\dag & z^0 & \dots & z^{N} & \bar z
  \end{pmatrix}.
  \end{align*}
  By introducing matrix variable
  \begin{align*}
 Y=\begin{pmatrix}
    AU  & BV
  \end{pmatrix}^T
  \begin{pmatrix}
    AU  & BV
  \end{pmatrix},
\end{align*}
problem \eqref{P3} may be relaxed as the following SDP,
\begin{align}\label{P4}
\nonumber   \max & \ f^\star+g^\star-f^{N+1}-g^{N}-\tr(L_oY) \\
\nonumber \st &  \ \tr(L_{i, j}^fY)\leq f^i-f^j, \ \ i,j\in\{1, ..., N+1, \star\}  \\
\nonumber &  \ \tr(L_{i, j}^gY)\leq g^i-g^j ,  \ \ i,j\in\{1, ..., N, \star\}  \\
 & \  \tr(L_0 Y)= \Delta\\
\nonumber & \   Y\succeq 0,
\end{align}
where the constant matrices $L_{i, j}^f, L_{i, j}^g, L_o, L_0$ are determined according to the constraints of problem \eqref{P3}. In the following sections, we present some new convergence results that are derived by solving  this kind of formulation.
\section{Worst-case convergence rate}\label{sec_f}

In this section, we provide  new convergence rates for ADMM with respect to some performance measures. Before we get to the theorems we need to present some lemmas.

\begin{lemma}\label{Lemma1}
Let $N\geq 4$ and $t, c \in\mathbb{R}$. Let $E(t, c )$ be $(N+1)\times (N+1)$ symmetric matrix given by
\[
  E(t, c )=\scalemath{1}{\left(\begin{array}{cccccccccc}
      2c  & 0 & 0  &  0       & \dots & 0  & 0 & \dots & 0 & t-c \\
      0 & \alpha_2 & \beta_2  &  0       & \dots & 0  & 0 & \dots & 0 & -t\\
      0 & \beta_2  & \alpha_3 & \beta_3  & \dots & 0  & 0 &  \dots & 0 & t\\
       \vdots &\vdots & \vdots & \vdots  & \vdots & \vdots & \vdots& \vdots & \vdots & \vdots\\
        0 & 0 & 0 & 0 & \dots & \alpha_k & \beta_k & \dots & 0 & t\\
        \vdots & \vdots & \vdots & \vdots  & \vdots & \vdots & \vdots& \vdots & \vdots & \vdots\\
         0 & 0 & 0  &  0       & \dots & 0  & 0 & \dots & \alpha_{N} & \beta_{N}\\
         t-c  & -t & t & t & \dots & t  & t &  \dots & \beta_{N} &  \alpha_{N+1}\\
\end{array}\right)},
\]
where
{\small{
\begin{align*}
\alpha_k=
\begin{cases}
  6c -5t, & {k=2}\\
 2\left(2k^2-3k+1\right)c -\left(4k-1\right)t, & {3\leq k\leq N-1}\\
  2N(N-1)c -(2N+1)t, & {k=N} \\
  2Nc -(N+1)t, & {k=N+1},
\end{cases}
\\
\beta_k=
\begin{cases}
  2kt-(2k^2-k-1)c , & {2\leq k\leq N-1 } \\
  3t-2(N-1)c , & {k=N},
\end{cases}
\end{align*}
}}
 and $k$ denotes row number. If $c >0$ is given, then
$$
[0, c ]\subseteq \{t: E(t, c )\succeq 0\}.
$$
\end{lemma}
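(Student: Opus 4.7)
The plan is to reduce the lemma to two endpoint PSD checks by exploiting linearity. For fixed $c$, every entry of $E(t,c)$ is an affine function of $t$: each $\alpha_k$ has the form $a - b\, t$, each $\beta_k$ has the form $b\, t - a$, and the bordering entries in row/column $N+1$ are either constants, multiples of $t$, or of the form $t-c$. Consequently $E(\cdot, c)$ is an affine map into the symmetric matrices and
\[
E(t,c)\;=\;\Bigl(1-\tfrac{t}{c}\Bigr)\,E(0,c)\;+\;\tfrac{t}{c}\,E(c,c),\qquad t\in[0,c],
\]
exhibits $E(t,c)$ as a convex combination of the two endpoint matrices. I would verify this identity entry-by-entry at the outset, after which $E(0,c)\succeq 0$ and $E(c,c)\succeq 0$ immediately give $E(t,c)\succeq 0$ on the whole interval.

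For each endpoint I would exploit the bordered-tridiagonal (arrowhead-plus-tridiagonal) structure via a Schur-complement argument. Partitioning
\[
E(t,c)\;=\;\begin{pmatrix} T(t,c) & v(t,c) \\ v(t,c)^{T} & \alpha_{N+1}\end{pmatrix},
\]
where $T(t,c)$ is the principal $N\times N$ submatrix, it suffices to establish (i) $T(t,c)\succ 0$ and (ii) $\alpha_{N+1}-v(t,c)^{T} T(t,c)^{-1}v(t,c)\ge 0$ at the two values $t=0$ and $t=c$. At $t=0$ the first row and column of $T$ decouple from the rest, so $T(0,c)$ splits into the direct sum of $(2c)$ and a tridiagonal block; a short computation based on $\alpha_k = 2(2k-1)(k-1)c$, $\beta_{k-1} = -(2k-1)(k-2)c$ and $\beta_k = -(2k+1)(k-1)c$ shows the excess $|\alpha_k|-|\beta_{k-1}|-|\beta_k|$ equals $c$ for $3\le k\le N-1$, with a positive excess also at the boundary rows $k=2$ and $k=N$, giving strict diagonal dominance and hence (i). The case $t=c$ is in the same spirit: the arrowhead structure survives, and positive definiteness of $T(c,c)$ should follow from a closely related diagonal-dominance estimate.

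The real work, and where I expect the proof to be delicate, is step (ii). Because the diagonal entries $\alpha_k$ are quadratic in $k$, the leading principal minors of $T$ satisfy a three-term recurrence whose solutions are polynomials in $k$. I would carry out the computation at $t=0$ first, where the arrowhead entries $\pm t$ in the last column vanish and $v(0,c)$ is supported on only its first and last nonzero coordinates; an explicit formula for $v(0,c)^{T}T(0,c)^{-1}v(0,c)$ then follows and can be compared against $\alpha_{N+1}(0,c)=2Nc$. I would then use the pattern that emerges to guess an $LDL^{T}$-style factorization of $E(c,c)$ directly, bypassing the need to invert $T(c,c)$ explicitly. Combining the two endpoint verifications with the convex-combination identity then yields the claim on the full interval $[0,c]$.
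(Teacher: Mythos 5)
Your overall strategy is the same as the paper's: since every entry of $E(t,c)$ is affine in $t$ for fixed $c$, the set $\{t: E(t,c)\succeq 0\}$ is convex (your explicit convex-combination identity is just this fact spelled out), so it suffices to verify positive semidefiniteness at the two endpoints $t=0$ and $t=c$. Your treatment of $E(0,c)$ is fine, and in fact simpler than you make it: the full $(N+1)\times(N+1)$ matrix $E(0,c)$ is already diagonally dominant with nonnegative diagonal (the border row $N+1$ has off-diagonal mass $c+2(N-1)c=(2N-1)c\le 2Nc=\alpha_{N+1}$), so no Schur complement is needed there. Your diagonal-dominance computation for the tridiagonal part, with excess exactly $c$ in the interior rows, is correct.

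The genuine gap is at $t=c$, and it sits precisely where you say "the real work" is. Two problems. First, $T(c,c)$ is only \emph{weakly} diagonally dominant: in the interior rows one has $|\alpha_k|-|\beta_{k-1}|-|\beta_k|=(4k^2-10k+3)-(2k^2-7k+4)-(2k^2-3k-1)=0$, so "a closely related diagonal-dominance estimate" does not directly give $T(c,c)\succ 0$; you would need to invoke irreducibility plus the strict excess $3N-5$ in row $N$, which you do not do. Second, and more seriously, step (ii) --- showing $\alpha_{N+1}-v^{T}T(c,c)^{-1}v\ge 0$ --- is never carried out: you propose to "guess an $LDL^{T}$-style factorization" from a pattern observed at $t=0$, but the $t=0$ pattern is degenerate (the border vector $v(0,c)$ has only two nonzero entries, whereas $v(c,c)$ is fully populated with entries $\pm c$) and gives no usable guidance. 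This last pivot is exactly the quantity the paper computes by explicit row reduction as
\begin{equation*}
J_{N+1,N+1}=N-2-\frac{(N-1)^2}{3N-5}-\sum_{i=4}^{N-1}\frac{(i-3)^2}{2i^2-3i-1},
\end{equation*}
and establishing its positivity requires the nontrivial estimate $\frac{(i-3)^2}{2i^2-3i-1}\le\frac{1}{2}+\frac{2}{(i-1)(i-2)}$ followed by a telescoping bound on the sum. Without an argument of this kind (or an equivalent closed form for the Schur complement), the proof is incomplete: everything up to the last pivot is routine, and the last pivot is the entire content of the lemma at $t=c$.
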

\begin{proof}
As $\{t: E(t, c )\succeq 0\}$ is  a convex set, it suffices to prove the  positive semidefiniteness of $E(0, c )$ and $E(c , c )$. Since $E(0, c )$ is diagonally dominant, it is positive semidefinite. Now, we establish that the matrix $K=E(1,1)$
is positive definite. To this end, we show that all leading principal minors of $K$ are positive. To compute the leading principal minors, we perform the following elementary row operations on $K$:
\begin{enumerate}[i)]
  \item
   Add the second row to the third row;
   \item
    Add the second row to the last row;
  \item
  Add the third row to the forth row;
  \item
  For $i=4:N-1$
  \begin{itemize}
    \item
     Add $i-th$ row to $(i+1)-th$ row;
    \item
     Add $\tfrac{3-i}{2i^2-3i-1}$ times of $i-th$ row to the last row;
  \end{itemize}
  \item
  Add $\frac{N-1}{3N-5}$ times of $N-th$ row to $(N+1)-th$ row.
\end{enumerate}
It is seen that $K_{k-1, k}+K_{k, k}=-K_{k+1, k}$ for $2\leq k\leq N-1$. Hence, by performing these operations, we get an upper triangular matrix $J$ with diagonal
\[
J_{k, k}=\begin{cases}
  2, & {k=1}\\
  2k^2-3k-1, & {2\leq k\leq N-1}\\
 3N-5, & {k=N} \\
N-2-\frac{(N-1)^2}{ 3N-5}-\sum_{i=4}^{N-1}\tfrac{(i-3)^2}{2i^2-3i-1} , & {k=N+1}. \end{cases}
\]
 It is seen all first $N$ diagonal elements of $J$ are positive. We show that $J_{N+1, N+1}$ is also positive.  For $i\geq 4$ we have
  \begin{align}\label{Lem_inq_pos}
  \tfrac{(i-3)^2}{2i^2-3i-1}\leq \tfrac{(i-1)^2+4}{2(i-1)^2}\leq \tfrac{1}{2}+\tfrac{2}{(i-1)(i-2)}.
  \end{align}
So,
  $$
 \tfrac{2N^2-9N+9}{3N-5}-\sum_{i=4}^{N-1}\tfrac{(i-3)^2}{2i^2-3i-1}\geq
  \tfrac{(N-2)(N^2-5N+10)}{2N(3N-5)}> 0,
  $$
 which implies $J_{N+1, N+1}>0$.
  Since we add a factor of $i-th$ row to $j-th$ row with $i<j$, all leading principal minors of matrices $K$ and $J$ are the same. Hence $K$ is positive definite. As $ E(c , c )=c  K$, one can infer the positive definiteness of $ E(c , c )$ and the proof is complete.
\end{proof}

In the upcoming lemma, we establish a valid inequality for ADMM that will be utilized in all the subsequent results presented in this section.

\begin{lemma}\label{Lemma0}
Let $f\in\mathcal{F}^A_{c_1}(\mathbb{R}^n)$, $g\in\mathcal{F}_{0}(\mathbb{R}^m)$ and $x^\star=0$, $z^\star=0$. Suppose that ADMM with the starting points $\lambda^0$ and $z^0$ generates $\{(x^k; z^k; \lambda^k)\}$. If $N\geq 4$ and $v\in\mathbb{R}^r$, then
 {\small{
\begin{align}\label{M.ineq}
\nonumber & N\langle \lambda^N, Ax^N+Bz^N\rangle-
\langle \lambda^N+tAx^N+tBz^{N-1}, Ax^N-v\rangle+
\langle \lambda^{0}+tAx^1+tBz^0, Ax^1-v\rangle+\\
\nonumber& \tfrac{1}{2t}\left\| \lambda^{0}-\lambda^\star\right\|^2-
\tfrac{1}{2t}\left\| \lambda^{N}-\lambda^\star\right\|^2+
 \tfrac{t}{2}\left\| z^0\right\|^2_B-
 t \left\langle Ax^1-Ax^2+(N+1)Ax^N+Bz^N, v \right\rangle-
\\
\nonumber &   t\sum_{k=3}^{N} \langle Ax^k, v\rangle+
\tfrac{t(N-1)}{2}\left\|v\right\|^2-
\tfrac{c_1}{2}\left\|x^1\right\|_A^2+\sum_{k=2}^{N}\tfrac{\alpha_k}{2}\left\|x^k\right\|^2_A+\sum_{k=2}^{N-1}\beta_k\langle Ax^k,Ax^{k+1}\rangle+
\\
\nonumber & tN\langle Bz^{N-1}, Ax^N-v\rangle+
t\langle Ax^N, Bz^{N} \rangle-
\tfrac{t(N-1)^2}{2}\left\| z^N-z^{N-1}\right\|_B^2-
\tfrac{tN^2}{2}\left\|Ax^{N}+Bz^{N}\right\|^2-\\
& t\left\|x^2\right\|_A^2+f(x^1)-f(x^N)+N\left(f(x^N)-f^\star+g(x^N)-g^\star\right)\geq 0,
 \end{align}
}}
where
{\small{
\begin{align*}
&\alpha_k=
\begin{cases}
  \left(4k-1\right)t-2\left(2k^2-3k+1\right)c_1, & {2\leq k\leq N-1},\\
  \left(4N+1\right)t-\left(2N^2-5N+3\right)c_1, & {k=N},
\end{cases}
\\
&\beta_k=\left(2k^2-k-1\right)c_1-2kt.
\end{align*}
}}
\end{lemma}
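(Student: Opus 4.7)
The approach is the standard performance-estimation strategy: the target inequality \eqref{M.ineq} should arise as a non-negative linear combination of the interpolation inequalities from Theorem \ref{T1} (specialized to $f\in\mathcal{F}^A_{c_1}$ and $g\in\mathcal{F}_0$, so $c_2=0$), combined with the optimality conditions \eqref{OPT_R} and the dual update rule. Concretely, I would write inequalities of the form
$$
\tfrac{c_1}{2}\|x^i-x^j\|_A^2\leq f(x^i)-f(x^j)-\langle \xi^j, x^i-x^j\rangle
$$
for the $(i,j)$ pairs that contribute non-trivially: the consecutive pairs $(x^{k-1},x^k)$ for $k=2,\ldots,N$, the pairs $(x^k,x^\star)$ (and their reverses) for selected indices, the pair $(x^1,x^N)$, and the analogous pairs on the $g$-side (where all terms involving $c_2$ drop). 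Then I would eliminate the subgradients using $\xi^k=-A^T\lambda^{k-1}-tA^T(Ax^k+Bz^{k-1})$ and $\eta^k=-B^T\lambda^k$ (this follows from \eqref{OPT} and \eqref{OPT_R}, together with $b=0$), and use $x^\star=z^\star=0$ to simplify many terms.

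The weights on the interpolation inequalities are chosen so that after summing, the function-value coefficients match what appears on the LHS of \eqref{M.ineq}: $f(x^1)$ with weight $1$, $f(x^N)$ with weight $N-1$, $f^\star$ with weight $-N$, $g(z^N)$ with weight $N$, $g^\star$ with weight $-N$, and all intermediate $f(x^k)$, $g(z^k)$ must cancel. These weights are not obvious from inspection and must essentially be read off from the dual of the SDP in \eqref{P4}; once they are in hand, the remainder is a bookkeeping exercise.

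After taking the weighted sum, the $\lambda$-terms telescope through the identity $\lambda^k-\lambda^{k-1}=t(Ax^k+Bz^k)$, producing $\tfrac{1}{2t}\|\lambda^0-\lambda^\star\|^2-\tfrac{1}{2t}\|\lambda^N-\lambda^\star\|^2$ and the various single-index inner products $\langle\lambda^0+tAx^1+tBz^0, Ax^1-v\rangle$ and $\langle\lambda^N+tAx^N+tBz^{N-1},Ax^N-v\rangle$ appearing in \eqref{M.ineq}. The cross terms $\langle Ax^k,Bz^{k-1}\rangle$ reorganize into the pieces $\tfrac{t}{2}\|z^0\|_B^2$, $\tfrac{t(N-1)^2}{2}\|z^N-z^{N-1}\|_B^2$, $\tfrac{tN^2}{2}\|Ax^N+Bz^N\|^2$, and $tN\langle Bz^{N-1},Ax^N-v\rangle$, while the free vector $v\in\mathbb{R}^r$ enters because each interpolation inequality involving $x^\star$ or a generic comparison point $x^{N+1}$ can be parametrized by a free point (the arbitrariness of $v$ is what will make the inequality useful in the next section). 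The purely $x$-side quadratic residue accumulates into $\sum_k \tfrac{\alpha_k}{2}\|x^k\|_A^2+\sum_k\beta_k\langle Ax^k,Ax^{k+1}\rangle$ with exactly the coefficients stated, which are arranged to pair with the matrix $E(t,c_1)$ of Lemma \ref{Lemma1} in the ensuing convergence theorem.

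The main obstacle is finding the correct multipliers and then carrying out the long but mechanical algebra that matches every bilinear monomial $\langle Ax^i,Ax^j\rangle$, $\langle Ax^i,Bz^j\rangle$, $\langle Bz^i,Bz^j\rangle$, $\langle \lambda^0,\cdot\rangle$, and $\langle \lambda^N,\cdot\rangle$ on both sides. No individual step is deep; in practice I would expand each weighted interpolation inequality, substitute the subgradient formulas, and collect terms monomial-by-monomial to verify that the resulting identity coincides with \eqref{M.ineq}. The strong-convexity-of-$f$ contributions $\tfrac{c_1}{2}\|x^i-x^j\|_A^2$ expand via the elementary identity $\|x^i-x^j\|_A^2=\|x^i\|_A^2-2\langle Ax^i,Ax^j\rangle+\|x^j\|_A^2$ and are exactly what generates the $c_1$ pieces inside $\alpha_k$ and $\beta_k$.
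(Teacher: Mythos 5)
Your overall strategy --- aggregating the interpolation inequalities of Theorem \ref{T1} with nonnegative multipliers, substituting the subgradients given by \eqref{OPT} and \eqref{OPT_R}, and verifying the resulting identity monomial by monomial --- is exactly the paper's strategy, and your accounting of the required function-value coefficients ($f(x^1)$ with weight $1$, $f(x^N)$ with weight $N-1$, $f^\star$ and $g^\star$ with weight $-N$, $g(z^N)$ with weight $N$, all intermediate values cancelling) is correct. However, the proposal stops short of a proof precisely where the lemma has its actual content: the multipliers themselves. The paper uses weight $k^2-1$ on the interpolation inequality for the pair $(x^{k+1},x^k)$ (subgradient taken at $x^k$), weight $k^2-k$ on the reversed pair $(x^k,x^{k+1})$, weight $1$ on each pair $(x^k,x^\star)$, and on the $g$-side weights $k^2$, $k^2+k$ and $1$ on the pairs $(z^k,z^{k+1})$, $(z^{k+1},z^k)$ and $(z^k,z^\star)$, for $k=1,\dots,N-1$ (resp.\ $k=1,\dots,N$); these specific quadratic-in-$k$ weights are what make the intermediate function values cancel with the stated coefficients and what generate $\alpha_k$ and $\beta_k$ after expanding $\|x^{k+1}-x^k\|_A^2$. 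Deferring to ``read off from the dual of the SDP'' leaves the entire difficulty unresolved.

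A second, more substantive issue is your mechanism for introducing the free vector $v$. You propose to obtain it by parametrizing the interpolation inequalities involving $x^\star$ or a generic comparison point by a free point; but any interpolation inequality anchored at a new point $x^{N+1}$ with $Ax^{N+1}=v$ necessarily drags the function value $f(x^{N+1})$ into the aggregate, and no such term appears in \eqref{M.ineq}. In the paper, $v$ enters only through the trivially nonnegative term $\tfrac{t}{2}\left\|Ax^1+Bz^0-v\right\|^2\ge 0$ added to the aggregate, and, after reorganization, through the discarded completed square $\tfrac{t}{2}\sum_{k=2}^{N-1}\left\|(k-1)B(z^{k-1}-z^{k})+kAx^{k}-(k+1)Ax^{k+1}+v\right\|^2$; this device is what produces the $\tfrac{t(N-1)}{2}\|v\|^2$ term and the linear-in-$v$ terms with the correct coefficients, and without it the aggregation would not yield an inequality valid for arbitrary $v\in\mathbb{R}^r$. (Also, the pair $(x^1,x^N)$ you list is not used in this lemma; inequalities of that type appear only later, in the proofs of Theorems \ref{Th.M}--\ref{Th.FD}.)
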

\begin{proof}
To establish the desired inequality, we demonstrate its validity by summing a series of valid inequalities. To simplify the notation, let  $f^k=f(x^k)$ and $g^k=g(z^k)$ for $k\in\{1, \dots, N\}$. Note that $b=0$ because  $x^\star=0,  z^\star=0$. By \eqref{interp} and \eqref{OPT}, we get the following inequality

 {\small{
\begin{align*}
&
\sum_{k=1}^{N-1}(k^2-1)\left(f^{k+1}-f^k+\left\langle \lambda^{k-1}+tAx^k+tBz^{k-1}, A(x^{k+1}-x^k) \right\rangle-\tfrac{c_1}{2}\left\|x^{k+1}-x^k\right\|_A^2\right)\\
 & +\sum_{k=1}^{N-1}(k^2-k)\left(f^k-f^{k+1}+\left\langle \lambda^{k}+tAx^{k+1}+tBz^{k}, A(x^k-x^{k+1}) \right\rangle-\tfrac{c_1}{2}\left\|x^{k+1}-x^k\right\|_A^2\right)\\
&  +\sum_{k=1}^{N}\left(f^k-f^\star+\left\langle \lambda^\star, Ax^k \right\rangle-\tfrac{c_1}{2}\left\|x^k\right\|_A^2\right)+
 \sum_{k=1}^{N-1}k^2\left(g^k-g^{k+1}+\left\langle \lambda^{k+1}, B(z^{k}-z^{k+1}) \right\rangle\right)
\\
 & +\sum_{k=1}^{N-1}(k^2+k)\left(g^{k+1}-g^k+\left\langle \lambda^k, B(z^{k+1}-z^k) \right\rangle\right)+
 \sum_{k=1}^{N}\left(g^k-g^\star+\left\langle \lambda^\star, Bz^k \right\rangle\right)\\
 &+\tfrac{t}{2}\left\| Ax^1+Bz^0-v\right\|^2\geq 0.
 \end{align*}
}}
As $\lambda^k = \lambda^{k-1} + tAx^k + tBz^k$, the inequality can be expressed as
 {\small{
\begin{align*}
&
 \sum_{k=1}^{N-1}(k^2-1)
 \left(\left\langle tAx^k+tBz^{k-1}, A(x^{k+1}-x^k) \right\rangle-\tfrac{c_1}{2}\left\|x^{k+1}-x^k\right\|_A^2\right)+\\
& \sum_{k=1}^{N-1}(k^2-1)
\left(\left\langle \lambda^{k}, Ax^{k+1} \right\rangle-\left\langle \lambda^{k-1}, Ax^{k} \right\rangle-\left\langle tAx^k+tBz^k, Ax^{k+1} \right\rangle\right)+\\
 & \sum_{k=1}^{N-1}(k^2-k)
 \left(\left\langle tAx^{k+1}+tBz^{k}, A(x^k-x^{k+1}) \right\rangle-\tfrac{c_1}{2}\left\|x^{k+1}-x^k\right\|_A^2\right)+\\
 & \sum_{k=1}^{N-1}(k^2-k)
 \left(\left\langle \lambda^{k-1}, Ax^{k} \right\rangle-\left\langle \lambda^{k}, Ax^{k+1} \right\rangle+\left\langle tAx^k+tBz^k, Ax^{k} \right\rangle\right)+\\
 &\sum_{k=1}^{N-1}(k^2+k)
 \left(\left\langle \lambda^k, Bz^{k+1} \right\rangle- \left\langle \lambda^{k-1}, Bz^k \right\rangle- \left\langle tAx^k+tBz^k, Bz^k \right\rangle\right)+\\
& \sum_{k=1}^{N-1}k^2
\Bigg(\left\langle \lambda^{k-1}, Bz^{k} \right\rangle-\left\langle \lambda^{k}, Bz^{k+1} \right\rangle+
\left\langle tAx^{k}+tBz^{k}+tAx^{k+1}+tBz^{k+1}, Bz^{k} \right\rangle-\\
&\ \ \ \ \left\langle tAx^{k+1}+tBz^{k+1}, Bz^{k+1} \right\rangle\Bigg)+
 \sum_{k=1}^{N}\left(\left\langle \lambda^\star, Ax^k+Bz^k \right\rangle-\tfrac{c_1}{2}\left\|x^k\right\|_A^2\right)+
\tfrac{t}{2}\left\| Bz^0\right\|^2+\\
 & \tfrac{t}{2}\left\| Ax^1-v\right\|^2+t\left\langle Ax^1-v, Bz^0\right\rangle+f^1-f^N+N(f^N-f^\star+g^N-g^\star)\geq 0.
 \end{align*}
}}
After performing some algebraic manipulations, we obtain
 {\small{
\begin{align*}
&N\langle \lambda^{N-1}, Ax^N+Bz^N\rangle-
\langle \lambda^{N-1}, Ax^N\rangle+\langle \lambda^{0}, Ax^1\rangle-
\sum_{k=0}^{N-1} \langle \lambda^k-\lambda^\star, Ax^{k+1}+Bz^{k+1}\rangle+\\
& \tfrac{t}{2}\left\| Ax^1-v\right\|^2+\tfrac{t}{2}\left\| Bz^0\right\|^2+t\left\langle Ax^1-v, Bz^0\right\rangle-
t(N^2-3N+1) \langle Ax^N, Bz^{N-1} \rangle-\\
& t\sum_{k=1}^{N-1}\left( (k-1)^2 \|Ax^k\|^2-(k^2-k)\langle Ax^k, Ax^{k+1}\rangle -(k^2-1)\langle Ax^{k+1}, Bz^{k-1}\rangle  \right)-\\
& t\sum_{k=1}^{N-1} \left( (k^2-k+1) \|Bz^k\|^2+(-k^2+k+1)\langle Ax^k, Bz^k\rangle-k^2\langle Bz^k, Bz^{k+1}\rangle\right)-\\
&t\sum_{k=2}^{N-1} \left( (2k^2-3k) \langle Ax^{k},  Bz^{k-1} \rangle\right)-t(N-1)^2\|Bz^{N}\|^2-t(N^2-3N+2)\|Ax^N\|^2-\\
&t(N-1)^2\langle Ax^N, Bz^{N} \rangle-
\sum_{k=1}^{N-1}\left((2k^2-k-1)\tfrac{c_1}{2}\left\|x^{k+1}-x^k\right\|_A^2+\tfrac{c_1}{2}\left\|x^{k+1}\right\|_A^2\right)-\\
 & \tfrac{c_1}{2}\left\|x^{1}\right\|_A^2+f^1-f^N+N(f^N-f^\star+g^N-g^\star)\geq 0.
 \end{align*}
}}
By using $\lambda^{N-1}=\lambda^N-tAx^N-tBz^N$ and
$$
2\langle \lambda^k-\lambda^\star,  Ax^{k+1}+Bz^{k+1}\rangle=\tfrac{1}{t}\| \lambda^{k+1}-\lambda^\star\|^2-\tfrac{1}{t}\| \lambda^{k}-\lambda^\star\|^2-
t\| Ax^{k+1}+Bz^{k+1}\|^2,
$$
we get
 {\small{
\begin{align*}
& N\langle \lambda^N, Ax^N+Bz^N\rangle-
\langle \lambda^N+tAx^N+tBz^{N-1}, Ax^N-v\rangle+
\langle \lambda^{0}+tAx^1+tBz^0, Ax^1-v\rangle\\
& +\tfrac{1}{2t}\left\| \lambda^{0}-\lambda^\star\right\|^2
-\tfrac{1}{2t}\left\| \lambda^{N}-\lambda^\star\right\|^2+
 \tfrac{t}{2}\left\| z^0\right\|_B^2-t \left\langle Ax^1-Ax^2+(N+1)Ax^N+Bz^N, v \right\rangle\\
& -t\sum_{k=3}^{N} \left\langle Ax^k, v\right\rangle
-\tfrac{t}{2} \sum_{k=2}^{N-1}\left\| (k-1)Bz^{k-1}-(k-1)Bz^{k}+kAx^{k}-(k+1)Ax^{k+1}+v \right\|^2\\
& +\tfrac{t(N-1)}{2}\left\|v\right\|^2-\frac{c_1}{2}\left\|x^1\right\|_A^2
-2t\left\|x^2\right\|_A^2
+\tfrac{1}{2}\sum_{k=2}^{N-1}\left(\left(4k-1\right)t-2\left(2k^2-3k+1\right)c_1\right)\left\|x^k\right\|^2_A\\
&+\sum_{k=2}^{N-1}\left(\left(2k^2-k-1\right)c_1-2kt\right)\langle Ax^k,Ax^{k+1}\rangle
+\left(\left(2N+\tfrac{1}{2}\right)t-\left(N^2-\tfrac{5}{2}N+\tfrac{3}{2}\right)c_1\right)\left\|x^N\right\|^2_A\\
&+tN\left\langle Bz^{N-1}, Ax^N-v\right\rangle+
t\left\langle Ax^N, Bz^{N}\right\rangle
-\tfrac{t\left(N-1\right)^2}{2}\left\| z^N-z^{N-1}\right\|_B^2
\\
& -\tfrac{tN^2}{2}\left\|Ax^{N}+Bz^{N}\right\|^2+f^1-f^N+N\left(f^N-f^\star+g^N-g^\star\right)\geq 0,
 \end{align*}
}}
which implies the desired inequality.

\end{proof}

We may now prove the main result of this section.

 \begin{theorem}\label{Th.M}
Let $f\in\mathcal{F}^A_{c_1}(\mathbb{R}^n)$ and $g\in\mathcal{F}_{0}(\mathbb{R}^m)$ with $c_1>0$. If $t\leq c_1$ and $N\geq 4$, then
\begin{align}\label{B2}
D(\lambda^\star)-D(\lambda^N)\leq \frac{\|\lambda^0-\lambda^\star\|^2+t^2\left\|z^0-z^\star\right\|_B^2}{4Nt}.
\end{align}
\end{theorem}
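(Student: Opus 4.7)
The strategy is to combine the master inequality from Lemma \ref{Lemma0} with a carefully chosen value of the free vector $v$ and a small family of additional subgradient inequalities evaluated at the auxiliary point $x^{N+1}$, where $x^{N+1}$ is the point constructed just before problem \eqref{P3} satisfying $-A^T\lambda^N\in\partial f(x^{N+1})$. Under the reduction $b=0$, $x^\star=0$, $z^\star=0$ preceding \eqref{P3}, and using \eqref{OPT_R} which supplies $-B^T\lambda^N\in\partial g(z^N)$, the dual value at iterate $N$ equals $D(\lambda^N)=f(x^{N+1})+g(z^N)+\langle\lambda^N,Ax^{N+1}+Bz^N\rangle$, while $D(\lambda^\star)=f^\star+g^\star$. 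The proof aims to reshape Lemma \ref{Lemma0} so that $2N\bigl(D(\lambda^\star)-D(\lambda^N)\bigr)$ appears cleanly on one side.

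First I would substitute $v=Ax^{N+1}$ into the inequality \eqref{M.ineq} of Lemma \ref{Lemma0}. This is the natural choice because every $v$-term becomes linear in $Ax^{N+1}$ and can be absorbed by subgradient inequalities of $f$ at $x^{N+1}$. I would then add three further families of valid inequalities: the strong-convexity inequality between $x^{N+1}$ and $x^\star=0$, namely $f^\star-f^{N+1}\geq-\langle\lambda^N,Ax^{N+1}\rangle+\tfrac{c_1}{2}\|x^{N+1}\|_A^2$, weighted by $N$; the strong-convexity inequalities between $x^{N+1}$ and selected earlier iterates $x^k$, with judiciously chosen positive multipliers; and the convexity inequality between $z^N$ and $z^\star=0$, namely $g^\star-g^N\geq-\langle\lambda^N,Bz^N\rangle$, weighted by $N$. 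The multipliers for the middle family should be calibrated so that the coefficient of every $f(x^k)$ with $k\leq N$ vanishes in the combined inequality, leaving only $2N(f^\star-f^{N+1})$, $2N(g^\star-g^N)$ and $2N\langle\lambda^N,Ax^{N+1}+Bz^N\rangle$, whose sum equals exactly $2N\bigl(D(\lambda^\star)-D(\lambda^N)\bigr)$.

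Once the functional terms are in the right configuration, I expect the combined inequality to take the form
\[
2N\bigl(D(\lambda^\star)-D(\lambda^N)\bigr)\;\leq\;\tfrac{1}{2t}\|\lambda^0-\lambda^\star\|^2+\tfrac{t}{2}\|z^0\|_B^2-\tfrac{1}{2t}\|\lambda^N-\lambda^\star\|^2-Q,
\]
where $Q$ is a quadratic form in the tuple $\bigl(Ax^\dagger,Ax^1,\ldots,Ax^N,Ax^{N+1}\bigr)$ once the multipliers $\lambda^k$ have been expanded via \eqref{Eqq}, augmented by manifestly non-negative remainders of the type $\tfrac{tN^2}{2}\|Ax^N+Bz^N\|^2$ and $\tfrac{t(N-1)^2}{2}\|z^N-z^{N-1}\|_B^2$ already present in \eqref{M.ineq}. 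The remaining step is to show $Q\geq 0$, which is the role of Lemma \ref{Lemma1}: after the cancellations, $Q$ should be recognisable as $\tfrac{1}{2}\,u^\top E(t,c_1)\,u$ for $u$ equal to the tuple above (or a close relabelling). Since Lemma \ref{Lemma1} guarantees $E(t,c_1)\succeq 0$ for $t\in[0,c_1]$, we conclude $Q\geq 0$, and discarding the non-positive terms $-\tfrac{1}{2t}\|\lambda^N-\lambda^\star\|^2-Q$ on the right yields $2N\bigl(D(\lambda^\star)-D(\lambda^N)\bigr)\leq\tfrac{1}{2t}\|\lambda^0-\lambda^\star\|^2+\tfrac{t}{2}\|z^0\|_B^2$, which is equivalent to the claimed bound after dividing by $4N$.

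The main obstacle is the bookkeeping involved in matching $Q$ to the quadratic form associated with $E(t,c_1)$. The piecewise structure of $\alpha_k$ and $\beta_k$ in Lemma \ref{Lemma1}, with distinct formulas at $k=2$, $3\leq k\leq N-1$, $k=N$ and $k=N+1$, signals that the multipliers of the added interpolation inequalities must be tailored at the endpoints, and that the coupling between $Ax^N$ and $Ax^{N+1}$ must be engineered so that the $(N,N+1)$ and $(N+1,N+1)$ entries of $E(t,c_1)$ emerge with their precise non-symmetric coefficients. Once this identification is verified, positive semidefiniteness from Lemma \ref{Lemma1} closes the argument.
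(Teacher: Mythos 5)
Your proposal follows essentially the same route as the paper's proof: substitute $v=A\hat x^{N}$ (your $Ax^{N+1}$) into Lemma \ref{Lemma0}, add interpolation inequalities linking the auxiliary point to $x^1$, $x^N$ and $x^\star$ together with the $(x^N,x^\star)$ and $(z^N,z^\star)$ inequalities (the paper's weights are $1$, $2N-2$, $1$, $N-1$ and $N$ respectively), and recognise the leftover quadratic as the form associated with $E(t,c_1)\succeq0$ from Lemma \ref{Lemma1}. The only cosmetic differences are that the paper's Gram tuple is $(Ax^1,\dots,Ax^N,A\hat x^{N})$ without $Ax^\dag$ and that $\lambda^N-\lambda^\star$ is absorbed into a completed square rather than kept as a separate $-\tfrac{1}{2t}\|\lambda^N-\lambda^\star\|^2$; note also that since $-A^T\lambda^N\in\partial f(x^{N+1})$ the inequality against $x^\star=0$ reads $f^\star-f^{N+1}\geq\langle\lambda^N,Ax^{N+1}\rangle+\tfrac{c_1}{2}\|x^{N+1}\|_A^2$, so your inner product carries the wrong sign, though this does not affect the viability of the approach.
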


\begin{proof}
As discussed in Section \ref{sec_pep}, we may assume that $x^\star=0$ and $z^\star=0$. By \eqref{opt_lam_N}, we have $D(\lambda^N)=f(\hat x^{N})+g(z^N)+\left\langle \lambda ^N, A\hat x^{N}+Bz^N\right\rangle$ for some $\hat x^{N}$ with
 $-A^T\lambda^N\in\partial f(\hat x^{N})$. By employing \eqref{interp} and \eqref{OPT}, we obtain

  {\small{
\begin{align}\label{T3.ineq2}
\nonumber  & N\left(g(x^N)-g^\star+\langle \lambda^\star, Bz^N\rangle\right)+
 (N-1)\left(f(x^N)-f^\star+\langle \lambda^\star, Ax^N\rangle-\tfrac{c_1}{2}\left\|x^N\right\|_A^2\right)+\\
\nonumber   & \left(f(\hat x^{N})-f(x^1)+\left\langle \lambda^0+tAx^{1}+tBz^0, A\hat x^{N}-Ax^1\right\rangle-\tfrac{c_1}{2}\left\|\hat x^{N}-x^1\right\|_A^2\right)+\\
\nonumber  & (2N-2)\Bigg(  f(\hat x^{N})-f(x^N)+\left\langle \lambda^N-tBz^{N}+tBz^{N-1}, A\hat x^{N}-Ax^N\right\rangle-\\
&\ \ \ \tfrac{c_1}{2}\left\|\hat x^{N}-x^N\right\|_A^2 \Bigg)+
   \left(  f(\hat x^{N})-f^\star+\langle \lambda^\star, A\hat x^{N}\rangle-\tfrac{c_1}{2}\left\|\hat x^{N}\right\|_A^2 \right) \geq 0.
 \end{align}
}}
By substituting $v$ with $A\hat x^{N}$ in inequality \eqref{M.ineq} and summing it with  \eqref{T3.ineq2}, we get the following inequality after performing some algebraic manipulations

  {\small{
\begin{align}\label{T3.ineq3}
\nonumber & 2N\left( f(\hat x^{N})+g(x^N)+\left\langle \lambda^N, A\hat x^{N}+Bz^N\right\rangle-f^\star-g^\star\right)+
 \tfrac{1}{2t}\left\| \lambda^{0}-\lambda^\star\right\|^2+
 \tfrac{t}{2}\left\| z^0\right\|_B^2-\\
\nonumber & \tfrac{1}{2t}\left\|  \lambda^N-\lambda^\star+t(N-1)Ax^N+tA\hat x^{N}+tNBz^N \right\|^2-\\
&\tfrac{t}{2}\left\| (N-1)(Bz^{N-1}-Bz^N)+tAx^N-tA\hat x^{N} \right\|^2-\\
\nonumber & \tfrac{1}{2}\tr\left({E(t, c_1)}
\begin{pmatrix}
  Ax^1 & \dots & A\hat x^{N}
\end{pmatrix}^T
\begin{pmatrix}
  Ax^1 & \dots & A\hat x^{N}
\end{pmatrix}\right)\geq 0,
 \end{align}
}}
where the positive semidefinite matrix $E(t, c_1)$ is given in Lemma \ref{Lemma1}. As the inner product of positive semidefinite matrices is non-negative, inequality \eqref{T3.ineq3} implies that
$$
 2N\left(D(\lambda^\star)-D(\lambda^N)\right)\leq \tfrac{1}{2t}\left\| \lambda^{0}-\lambda^\star\right\|^2+ \tfrac{t}{2}\left\| z^0\right\|_B^2,
$$
and the proof is complete.

\end{proof}

 In comparison with Theorem \ref{TT1}, we could get a new convergence rate when only $f$ is strongly convex, i.e. $g$ does not need to be strongly convex. Also, the constant does not depend on $\lambda^1$. One important question concerning bound \eqref{B2} is its tightness, that is, if there is an optimization problem which attains the given convergence rate. It turns out that the bound \eqref{B2} is exact. The following example demonstrates  this point.

\begin{example}
Suppose that $c_1>0$, $N\geq 4$ and $t\in (0, c_1]$. Let  $f, g: \mathbb{R}\to\mathbb{R}$ be given as follows,
$$
f(x)=\tfrac{1}{2}|x|+\tfrac{c_1}{2}x^2, \ \
g(z)=\tfrac{1}{2}\max\{\tfrac{N-1}{N}(z-\tfrac{1}{2Nt})-\tfrac{1}{2Nt}, -z\}.
$$
Consider the optimization problem
\begin{align*}
\min_{(x, z)\in \mathbb{R}\times\mathbb{R}} f(x)+g(z),\\
\nonumber \st \ x+z=0,
\end{align*}
It is seen that  $A=B=I$ in this problem. Note that $(x^\star, z^\star)=(0, 0)$ with Lagrangian multiplier $\lambda^\star=\tfrac{1}{2}$ is an optimal solution and the optimal value is zero.  One can check that Algorithm \ref{ADMM} with initial point $\lambda^0=\tfrac{-1}{2}$ and $ z^0=0$ generates the following points,
\begin{align*}
 & x^k=0 & k\in\{1, \dots, N\} \\
 & z^k=\tfrac{1}{2Nt} & k\in\{1, \dots, N\} \\
  & \lambda^k=\tfrac{-1}{2}+\tfrac{k}{2N} & k\in\{1, \dots, N\}.
\end{align*}
At $\lambda^{N}$, we have $D(\lambda^N)=\tfrac{-1}{4Nt}=-\tfrac{\|\lambda^0-\lambda^\star\|^2+t^2\left\|z^0-z^\star\right\|_B^2}{4Nt}$, which shows the tightness of bound \eqref{B2}.
\end{example}

One important factor concerning dual-based methods that determines the efficiency of an algorithm is primal and dual feasibility (residual) convergence rates. In what follows, we study this subject under the setting of Theorem \ref{Th.M}. The next theorem gives a convergence rate in terms of  primal residual under the setting of Theorem \ref{Th.M}.

\begin{theorem}\label{Th.F}
Let $f\in\mathcal{F}_{c_1}^A(\mathbb{R}^n)$ and $g\in\mathcal{F}_{0}(\mathbb{R}^m)$ with $c_1>0$. If $t\leq c_1$ and $N\geq 4$, then
\begin{align}\label{B3_F}
\left\|Ax^N+Bz^N-b\right\|\leq \frac{\sqrt{\|\lambda^0-\lambda^\star\|^2+t^2\left\| z^0-z^\star\right\|_B^2}}{tN}.
\end{align}
\end{theorem}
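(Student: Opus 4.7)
The plan is to reduce the claim to a bound on a single consecutive dual increment. By the dual update $\lambda^k=\lambda^{k-1}+t(Ax^k+Bz^k-b)$ we have the identity
\begin{equation*}
Ax^N+Bz^N-b \;=\; \tfrac{1}{t}\bigl(\lambda^N-\lambda^{N-1}\bigr),
\end{equation*}
so \eqref{B3_F} is equivalent to
\begin{equation*}
\|\lambda^N-\lambda^{N-1}\|^2 \;\leq\; \tfrac{1}{N^2}\!\left(\|\lambda^0-\lambda^\star\|^2+t^2\|z^0-z^\star\|_B^2\right).
\end{equation*}
Following the normalizations used in Section \ref{sec_pep}, I would assume without loss of generality that $b=0$ and $(x^\star,z^\star)=(0,0)$, so that the target becomes a bound on $t^2N^2\|Ax^N+Bz^N\|^2$ in terms of $\|\lambda^0-\lambda^\star\|^2+t^2\|z^0\|_B^2$.

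The approach is a PEP-style aggregation parallel to the proof of Theorem \ref{Th.M}, but tuned to expose the primal residual rather than the dual gap. I would begin with the master inequality \eqref{M.ineq} of Lemma \ref{Lemma0}, in which the term $-\tfrac{tN^2}{2}\|Ax^N+Bz^N\|^2$ already appears with the correct sign to produce the desired quadratic on the right-hand side after regrouping. I would then specialize the free parameter $v$ so that the remaining $v$-dependent terms vanish or combine cleanly; a natural first attempt is $v=0$, which immediately kills every $v$-linear term in \eqref{M.ineq}. On top of this I would add a weighted combination of valid inequalities analogous to those used for Theorem \ref{Th.M}: the $\mathcal{F}^A_{c_1}$-interpolation inequalities between each $x^k$ and $x^\star=0$ (using $-A^T\lambda^\star\in\partial f(0)$), the $\mathcal{F}_0$-interpolation inequalities between each $z^k$ and $z^\star=0$ (using $-B^T\lambda^\star\in\partial g(0)$), and an interpolation inequality involving the auxiliary point $\hat x^N$ defined by $-A^T\lambda^N\in\partial f(\hat x^N)$, so that the term $N\bigl(f(x^N)-f^\star+g(z^N)-g^\star\bigr)$ in \eqref{M.ineq} gets absorbed into the nonnegative primal-dual gap $f(\hat x^N)+g(z^N)+\langle\lambda^N,A\hat x^N+Bz^N\rangle-f^\star-g^\star\geq 0$.

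After this regrouping I expect to arrive at an aggregate inequality of the shape
\begin{equation*}
\tfrac{tN^2}{2}\|Ax^N+Bz^N\|^2 \;+\; S \;+\; \tfrac{1}{2}\tr\!\bigl(E(t,c_1)\,G\bigr) \;\leq\; \tfrac{1}{2t}\|\lambda^0-\lambda^\star\|^2 + \tfrac{t}{2}\|z^0\|_B^2,
\end{equation*}
where $S$ is a sum of squared norms of linear combinations of the iterates, $G$ is the Gram matrix of a suitably extended sequence $\{Ax^k\}$, and $E(t,c_1)$ is the matrix of Lemma \ref{Lemma1}. Since $t\in(0,c_1]$ lies in the range for which $E(t,c_1)\succeq 0$ by Lemma \ref{Lemma1}, both $S$ and the trace term are nonnegative and may be discarded. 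Multiplying through by $2/(tN^2)$ and taking square roots yields exactly \eqref{B3_F}.

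The main obstacle is the algebraic bookkeeping: choosing the weights on the auxiliary interpolation inequalities so that, after summation with \eqref{M.ineq}, every function value $f^k,g^k$, every product $\langle\lambda^\star,Ax^k\rangle,\langle\lambda^\star,Bz^k\rangle$, and every mixed inner product $\langle Ax^i,Bz^j\rangle$ outside the targeted quadratic form cancels, leaving only a positive multiple of $\|Ax^N+Bz^N\|^2$ and the initial-distance terms. This is structurally the same task as in the proof of Theorem \ref{Th.M}, and the positive semidefiniteness of $E(t,c_1)$ certified by Lemma \ref{Lemma1} is precisely what allows the residual quadratic form on the left-hand side to be discarded at the end.
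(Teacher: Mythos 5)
Your high-level strategy is the right one --- the paper's proof of Theorem \ref{Th.F} is indeed another aggregation built on Lemma \ref{Lemma0} plus a few extra interpolation inequalities, ending in a discarded PSD quadratic form --- but the specific choices you commit to are not the ones that make the bookkeeping close, and one of them contains an outright sign error. First, the quantity $f(\hat x^{N})+g(z^{N})+\langle\lambda^{N},A\hat x^{N}+Bz^{N}\rangle-f^\star-g^\star$ equals $D(\lambda^{N})-D(\lambda^\star)$, which is $\leq 0$, not $\geq 0$ as you assert; you cannot discard it from the left-hand side of the target inequality. The paper avoids the auxiliary point $\hat x^{N}$ entirely here: it cancels the term $N\bigl(f(x^{N})-f^\star+g(z^{N})-g^\star\bigr)$ by adding, with weight $N$, the interpolation inequalities anchored at $x^{N}$ and $z^{N}$ themselves (using the subgradients $-A^T\lambda^{N}-tA^TB(z^{N-1}-z^{N})\in\partial f(x^{N})$ and $-B^T\lambda^{N}\in\partial g(z^{N})$ from \eqref{OPT_R}), together with one more $f$-inequality between $x^{N}$ and $x^{1}$; see \eqref{T4.ineq2}.

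Second, the choice $v=0$ is unlikely to work: the $v$-dependent terms in \eqref{M.ineq} are not a nuisance to be killed but the degrees of freedom needed to complete the squares, and the paper takes $v=Ax^{N}$, which is what produces the square $\tfrac{t(N-1)^2}{2}\bigl\|z^{N-1}-z^{N}+\tfrac{N}{(N-1)^2}x^{N}\bigr\|_B^2$ and isolates $\tfrac{tN^2}{2}\|Ax^{N}+Bz^{N}\|^2$ cleanly. Third, the residual quadratic form is \emph{not} certified by $E(t,c_1)$ of Lemma \ref{Lemma1}: because $\hat x^{N}$ is absent, the Gram matrix is over $\{Ax^1,\dots,Ax^{N}\}$ only, and the certificate is a different $N\times N$ matrix $D(t,c_1)$ (with a modified last row and corner entries) whose positive semidefiniteness on $t\in[0,c_1]$ requires its own argument, given in Appendix \ref{appen1}. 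Since the entire content of the theorem lies in exhibiting these multipliers, the value of $v$, and the PSD certificate, and each of your concrete proposals for them is either wrong or unverified, the proposal as written does not constitute a proof.
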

\begin{proof}

The argument is similar to that used in the proof of Theorem \ref{Th.M}. By setting $v=Ax^{N}$ in \eqref{M.ineq}, one can infer the following inequality
 {\small{
\begin{align}\label{T4.ineq1}
\nonumber & N\left\langle \lambda^N, Ax^N+Bz^N\right\rangle+
\left\langle \lambda^{0}+tAx^1+tBz^0, Ax^1-Ax^N\right\rangle+
 \tfrac{1}{2t}\left\| \lambda^{0}-\lambda^\star\right\|^2+
 \tfrac{t}{2}\left\| z^0\right\|_B^2-\\
\nonumber & t \left\langle Ax^1-Ax^2, Ax^N \right\rangle
+\tfrac{t(N-1)}{2}\left\|Ax^N\right\|^2
-t\sum_{k=3}^{N} \left\langle Ax^k, Ax^N\right\rangle-\frac{c_1}{2}\left\|x^1\right\|_A^2-t\left\|x^2\right\|_A^2+\\
\nonumber &\sum_{k=2}^{N-1}\left(\left(2k-\tfrac{1}{2}\right)t-\left(2k^2-3k+1\right)c_1\right)\left\|x^k\right\|^2_A+
\left(\left(\tfrac{3}{2}N-\tfrac{3}{2}\right)t-\left(N^2-\tfrac{5}{2}N+\tfrac{3}{2}\right)c_1\right)\left\|x^N\right\|^2_A+\\
\nonumber  &\sum_{k=2}^{N-1}\left(\left(2k^2-k-1\right)c_1-2kt\right)\langle Ax^k,Ax^{k+1}\rangle-
\tfrac{t\left(N-1\right)^2}{2}\left\| z^N-z^{N-1}\right\|_B^2-
\\
& \tfrac{tN^2}{2}\left\|Ax^{N}+Bz^{N}\right\|^2 +f(x^1)-f(x^N)+N\left(f(x^N)-f^\star+g(x^N)-g^\star\right)\geq 0.
 \end{align}
}}
By employing \eqref{interp} and \eqref{OPT}, we have
 {\small{
\begin{align}\label{T4.ineq2}
\nonumber  &  N\left(f^\star-f(x^N)-\langle \lambda^N+Bz^{N-1}-Bz^N, Ax^N\rangle-\tfrac{c_1}{2}\left\|x^N\right\|_A^2\right)+\\
& \left(f(x^N)-f^1+\left\langle \lambda^0+tAx^{1}+tBz^0, Ax^N-Ax^1\right\rangle-\tfrac{c_1}{2}\left\|x^N-x^1\right\|_A^2\right)+\\
\nonumber  & N\left(g^\star-g(x^N)-\langle \lambda^N, Bz^N\rangle\right)\geq 0.
 \end{align}
}}
By summing \eqref{T4.ineq1} and \eqref{T4.ineq2}, we obtain
 {\small{
\begin{align}\label{T4.ineq3}
\nonumber& \tfrac{1}{2t}\left\| \lambda^{0}-\lambda^\star\right\|^2+ \tfrac{t}{2}\left\| z^0\right\|_B^2-
 \tfrac{t\left(N-1\right)^2}{2}\left\| z^{N-1}-z^{N}+\tfrac{N}{(N-1)^2} x^N\right\|_B^2-\\
 & \tfrac{tN^2}{2}\left\|Ax^{N}+Bz^{N}\right\|^2-\tfrac{1}{2}\tr\left({D(t, c_1)}
\begin{pmatrix}
  Ax^1 & \dots & Ax^{N}
\end{pmatrix}^T
\begin{pmatrix}
  Ax^1 & \dots & Ax^{N}
\end{pmatrix}\right)
\geq 0,
 \end{align}
}}
where the matrix $D(t, c_1)$ is as follows,
\[
  D(t, c_1)=\scalemath{1}{\left(\begin{array}{cccccccccc}
      2c_1 & 0 & 0  &  0       & \dots & 0  & 0 & \dots & 0 & t-c_1\\
      0 & \alpha_2 & \beta_2  &  0       & \dots & 0  & 0 & \dots & 0 & -t\\
      0 & \beta_2  & \alpha_3 & \beta_3  & \dots & 0  & 0 &  \dots & 0 & t\\
       \vdots &\vdots & \vdots & \vdots  & \vdots & \vdots & \vdots& \vdots & \vdots & \vdots\\
        0 & 0 & 0 & 0 & \dots & \alpha_k & \beta_k & \dots & 0 & t\\
        \vdots & \vdots & \vdots & \vdots  & \vdots & \vdots & \vdots& \vdots & \vdots & \vdots\\
         0 & 0 & 0  &  0       & \dots & 0  & 0 & \dots & \alpha_{N-1} & \beta_{N-1}\\
         t-c_1 & -t & t & t & \dots & t  & t &  \dots & \beta_{N-1} &  \alpha_{N}\\
\end{array}\right)},
\]
and
\begin{align*}
&\alpha_k=
\begin{cases}
  6c_1-5t, & {k=2}\\
  2\left(2k^2-3k+1\right)c_1-\left(4k-1\right)t, & {3\leq k\leq N-1},\\
  \left(2N^2-4N+4\right)c_1-\left(3N-5+\frac{N^2}{\left(N-1\right)^2}\right)t, & {k=N},\\
\end{cases}
\\
& \beta_k=2kt-\left(2k^2-k-1\right)c_1, \ \ \  {2\leq k\leq N-1}
\end{align*}
As the matrix $D(t, c_1)$ is positive semidefinite, see Appendix \ref{appen1}, inequality \eqref{T4.ineq3} implies that
$$
  \tfrac{tN^2}{2}\left\|Ax^{N}+Bz^{N}\right\|^2\leq \tfrac{1}{2t}\left\| \lambda^{0}-\lambda^\star\right\|^2+ \tfrac{t}{2}\left\| z^0\right\|_B^2,
$$
and the proof is complete.
\end{proof}

The following example shows the exactness of bound \eqref{B3_F}.

\begin{example}
Let $c_1>0$, $N\geq 4$ and $t\in (0, c_1]$. Consider functions   $f, g: \mathbb{R}\to\mathbb{R}$  given by the formulae follows,
\begin{align*}
 & f(x)=\tfrac{1}{2}|x|+\tfrac{ c_1}{2}x^2,\\
& g(z)=\max\{\left(\tfrac{1}{2}-\tfrac{1}{N}\right)\left(z-\tfrac{1}{Nt} \right),
\tfrac{1}{2}\left(\tfrac{1}{Nt}-z \right)\}.
\end{align*}

We formulate the following optimization problem,
\begin{align*}
\min_{(x, z)\in \mathbb{R}\times\mathbb{R}} f(x)+g(z),\\
\nonumber \st \ Ax+Bz=0,
\end{align*}
where $A=B=I$. One can verify that $(x^\star, z^\star)=(0, 0)$ with Lagrangian multiplier $\lambda^\star=\tfrac{1}{2}$ is an optimal solution.   Algorithm \ref{ADMM} with initial point $\lambda^0=\tfrac{-1}{2}$ and $ z^0=0$ generates the following points,
\begin{align*}
 & x^k=0 & k\in\{1, \dots, N\} \\
 & z^k=\tfrac{1}{Nt} & k\in\{1, \dots, N\} \\
  & \lambda^k=\tfrac{2k-N}{2N} & k\in\{1, \dots, N\}.
\end{align*}
At iteration $N$, we have $\|Ax^N+Bz^N\|=\tfrac{1}{tN}=\tfrac{\sqrt{\|\lambda^0-\lambda^\star\|^2+t^2\left\| z^0-z^\star\right\|_B^2}}{tN}$, which shows the tightness of bound \eqref{B3_F}.
\end{example}

In what follows, we study the convergence rate of ADMM in terms residual dual. To this end, we investigate the convergence rate of  $\{B\left(z^{k-1}-z^k\right)\}$ as $\left\|A^TB\left(z^{k-1}-z^k\right)\right\|\leq \|A\|\left\|z^{k-1}-z^k\right\|_B$. The next theorem provides a convergence rate for the aforementioned sequence.

\begin{theorem}\label{Th.FD}
Let $f\in\mathcal{F}^A_{c_1}(\mathbb{R}^n)$ and $g\in\mathcal{F}_{0}(\mathbb{R}^m)$ with $c_1>0$. If $t\leq c_1$ and $N\geq 4$, then
\begin{align}\label{B3}
\left\|z^N-z^{N-1}\right\|_B\leq \frac{\sqrt{\|\lambda^0-\lambda^\star\|^2+t^2\left\|  z^0-z^\star\right\|_B^2}}{(N-1)t}.
\end{align}
\end{theorem}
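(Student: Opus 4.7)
The proof plan follows the same SDP-based template as the two preceding theorems. The master inequality \eqref{M.ineq} of Lemma \ref{Lemma0} already contains the desired quantity with the right coefficient, namely the term $-\tfrac{t(N-1)^2}{2}\|z^N-z^{N-1}\|_B^2$. So the strategy is to specialize $v$ and add supplementary inequalities so that all other indefinite terms either cancel out or can be dropped by nonnegativity, leaving
\[
\tfrac{t(N-1)^2}{2}\bigl\|z^N - z^{N-1}\bigr\|_B^2 \;\leq\; \tfrac{1}{2t}\|\lambda^0-\lambda^\star\|^2 + \tfrac{t}{2}\|z^0-z^\star\|_B^2,
\]
which is exactly \eqref{B3} after taking square roots.

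First I would apply Lemma \ref{Lemma0} with a natural choice such as $v=Ax^N$ (as in Theorem \ref{Th.F}), in which case the pairing $tN\langle Bz^{N-1}, Ax^N-v\rangle$ vanishes and the primal-residual square $\|Ax^N+Bz^N\|^2$ can be treated just by nonpositivity (we do not want to exploit its smallness here, only its sign). Next, I would add the same type of subgradient and strong-convexity inequalities used in \eqref{T3.ineq2} and \eqref{T4.ineq2}: specifically, strong-convexity inequalities for $f$ coupling the iterates $x^1,\ldots,x^N$ and the optimizer $x^\star=0$ with the subgradients supplied by \eqref{OPT}, together with convexity inequalities for $g$ coupling $z^N, z^{N-1}$ and $z^\star = 0$ against the subgradients $-B^T\lambda^k \in \partial g(z^k)$ from \eqref{OPT_R}. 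Using $\lambda^k-\lambda^{k-1}=t(Ax^k+Bz^k)$ the objective-value terms $f(x^k)$ and $g(z^k)$ should telescope against the dual pairings, in the same fashion as in the proof of Theorem \ref{Th.F}.

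After these substitutions and a completion of squares, the remaining quadratic part should take the form $-\tfrac{1}{2}\operatorname{tr}(M(t,c_1)\, YY^T)$ for some symmetric matrix $M(t,c_1)$ indexed by $(Ax^1,\ldots,Ax^N)$ (and possibly extra cross terms with $B(z^{N-1}-z^N)$), which I would want to show is positive semidefinite for every $t\in(0,c_1]$. This is the main obstacle. I expect $M(t,c_1)$ to resemble the tridiagonal-plus-border matrices $E(t,c_1)$ of Lemma \ref{Lemma1} and $D(t,c_1)$ in the proof of Theorem \ref{Th.F}, so that the same convexity-in-$t$ argument reduces positive semidefiniteness to checking the two endpoints $t=0$ and $t=c_1$. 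At $t=0$ diagonal dominance should suffice, while at $t=c_1$ I would mimic the elementary row operations of Lemma \ref{Lemma1}, obtaining an upper triangular matrix whose first $N-1$ diagonal entries are manifestly positive and whose final entry is positive by an inequality of the same flavour as \eqref{Lem_inq_pos}. Once the semidefiniteness of $M(t,c_1)$ is established, nonnegativity of the inner product of PSD matrices yields the claimed bound \eqref{B3}.
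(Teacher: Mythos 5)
Your general template (master inequality plus supplementary interpolation inequalities plus a PSD certificate) is the right one, but the specific plan of applying Lemma \ref{Lemma0} at iteration $N$ with $v=Ax^N$ and then reading off the term $-\tfrac{t(N-1)^2}{2}\|z^N-z^{N-1}\|_B^2$ has a genuine gap. That term cannot simply be kept aside while everything else is dropped by sign: in the $v=Ax^N$ decomposition it is exactly the quantity needed to absorb the cross terms between $B(z^{N-1}-z^N)$ and $Ax^N$. This is visible in the proof of Theorem \ref{Th.F}, where after adding \eqref{T4.ineq2} the relevant piece appears as the completed square $-\tfrac{t(N-1)^2}{2}\|z^{N-1}-z^{N}+\tfrac{N}{(N-1)^2}x^N\|_B^2$, i.e.\ the entire coefficient $\tfrac{t(N-1)^2}{2}$ is consumed by a square with a nonzero shift in $Ax^N$. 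If you try to both absorb those cross terms and retain a positive multiple of $\|z^N-z^{N-1}\|_B^2$ for the final bound, you must split the coefficient and you end up with a constant strictly worse than $(N-1)^2t^2$ in the denominator of \eqref{B3}; the bound you would prove is not the tight one claimed (and verified by the example following the theorem).

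The paper's proof avoids this by a device absent from your plan: it applies the master inequality \eqref{M.ineq} only through iteration $N-1$ (so the squared difference it contains is $\|z^{N-1}-z^{N-2}\|_B^2$, not the target), and then couples in iteration $N$ through a separate, heavily weighted family of interpolation inequalities \eqref{T5.ineq2} with multipliers such as $N^2-3N+2$, $N^2-3N+1$, $N(N-1)$, $(N-1)^2$ and $N-1$ on pairs drawn from $\{x^{N-1},x^N,x^1,x^\star\}$ and $\{z^{N-1},z^N,z^\star\}$. After summation the quantity $-\tfrac{t(N-1)^2}{2}\|z^N-z^{N-1}\|_B^2$ appears as an \emph{isolated} negative square, alongside two other completed squares (one in $\tfrac{N}{N+1}Ax^N+Bz^N$, one mixing $Bz^{N-2}-Bz^{N-1}$ with $Ax^{N-1}$ and $Ax^N$) and a trace term against a bordered tridiagonal matrix $F(t,c_1)$ whose positive semidefiniteness is checked exactly as you anticipated for $E$ and $D$. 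So your PSD-certificate intuition for the final step is sound, but without the shift to $N-1$ iterations and the new coupling inequalities the decomposition you describe does not isolate the dual residual with the required coefficient.
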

\begin{proof}
Similar to the proof of  Theorem \ref{Th.M}, by setting $v=Ax^{N}$ in \eqref{M.ineq} for $N-1$ iterations, one can infer the following inequality
\begin{align}\label{T5.ineq1}
\nonumber & (N-1)\langle \lambda^{N-1}, Ax^{N-1}+Bz^{N-1}\rangle+\tfrac{1}{2t}\| \lambda^{0}-\lambda^\star\|^2-
\tfrac{1}{2t}\| \lambda^{N-1}-\lambda^\star\|^2+\\
\nonumber& \tfrac{t}{2}\left\| z^0\right\|^2_B-\langle \lambda^{N-1}+tAx^{N-1}+tBz^{N-2}, Ax^{N-1}-Ax^N\rangle+
\tfrac{t(N-2)}{2}\|x^N\|_A^2 +\\
\nonumber&\langle \lambda^{0}+tAx^1+tBz^0, Ax^1-Ax^N\rangle-t \left\langle Ax^1-Ax^2+NAx^{N-1}+Bz^{N-1}, Ax^N \right\rangle\\
\nonumber&+\frac{1}{2}\sum_{k=2}^{N-2}\left(\left(4k-1\right)t-2\left(2k^2-3k+1\right)c_1\right)\left\|x^k\right\|^2_A+
t\langle Ax^{N-1}, Bz^{N-1} \rangle+\\
\nonumber&\sum_{k=2}^{N-2}\left(\left(2k^2-k-1\right)c_1-2kt\right)\langle Ax^k,Ax^{k+1}\rangle+t(N-1)\langle Bz^{N-2}, Ax^{N-1}-Ax^N\rangle\\
 \nonumber& +\frac{1}{2}\left(\left(4N-3\right)t-\left(2N^2-9N+10\right)c_1\right)\left\|x^{N-1}\right\|^2_A-t\left\|x^2\right\|_A^2-\tfrac{c_1}{2}\left\|x^1\right\|_A^2-\\
 \nonumber&\tfrac{t(N-2)^2}{2}\left\| z^{N-1}-z^{N-2}\right\|_B^2-
\tfrac{t(N-1)^2}{2}\|Ax^{N-1}+Bz^{N-1}\|^2-t\sum_{k=3}^{N-1} \langle Ax^k, Ax^N\rangle+\\
& f(x^1)-f(x^{N-1})+(N-1)(f(x^{N-1})-f^\star+g(x^{N-1})-g^\star)\geq 0.
 \end{align}
By using \eqref{interp} and \eqref{OPT}, we have
{\small{
\begin{align}\label{T5.ineq2}
\nonumber& (N^2-3N+2)\bigg(f(x^{N-1})-f(x^{N})+\left\langle \lambda^{N-1}+tAx^{N}+tBz^{N-1}, A\left(x^{N-1}-x^{N}\right) \right\rangle-\\
\nonumber&\tfrac{c_1}{2}\left\|x^{N}-x^{N-1}\right\|_A^2\bigg)+\bigg(f(x^{N})-f(x^1)+\bigg\langle \lambda^0+tAx^1+tBz^0, A\left(x^{N}-x^1\right) \bigg\rangle-\\
&\tfrac{c_1}{2}\|x^{N}-x^1\|_A^2\bigg)+N(N-1)\left(g(z^{N})-g(z^{N-1})+\left\langle \lambda^{N-1}, B\left(z^{N}-z^{N-1}\right) \right\rangle\right)+\\
\nonumber&(N^2-3N+1)\bigg(f(x^{N})-f(x^{N-1})+\bigg\langle \lambda^{N-1}-tBz^{N-1}+tBz^{N-2}, A\left(x^{N}-x^{N-1}\right) \bigg\rangle\\
\nonumber&-\tfrac{c_1}{2}\|x^{N}-x^{N-1}\|_A^2\bigg)+(N-1)\left(g^\star-g(z^{N})-\left\langle \lambda^{N-1}+tAx^N+tBz^N, Bz^N \right\rangle\right)+\\
\nonumber&(N-1)\bigg(f^\star-f(x^{N-1})-\bigg\langle \lambda^{N-1}-tBz^{N-1}+tBz^{N-2}, Ax^{N-1} \bigg\rangle-\tfrac{c_1}{2}\|x^{N-1}\|_A^2\bigg)+\\
\nonumber&(N-1)^2\left(g(z^{N-1})-g(z^{N})+\left\langle \lambda^{N-1}+tAx^N+Bz^N, B\left(z^{N-1}-z^{N}\right) \right\rangle\right)\geq0.
\end{align}
}}
By summing \eqref{T5.ineq1} and \eqref{T5.ineq2}, we obtain
{\small{
\begin{align*}
  &\tfrac{1}{2t}\left\|\lambda^0-\lambda^\star\right\|^2+\tfrac{t}{2}\left\|z^0\right\|_B^2-\tfrac{(N^2-1)t}{2}\left\|\tfrac{N}{N+1}Ax^N+Bz^N\right\|^2-\frac{t(N-1)^2}{2}\left\|z^N-z^{N-1}\right\|_B^2-\\
  &\tfrac{(N-2)^2t}{2}\bigg\| Bz^{N-2}-Bz^{N-1}+\tfrac{N-1}{N-2}Ax^{N-1}-\left(1-\tfrac{1}{(N-2)^2}\right)Ax^{N} \bigg\|^2-\\
    & \tfrac{1}{2}\tr\left({F(t, c_1)}
\begin{pmatrix}
  Ax^1 & \dots & Ax^{N}
\end{pmatrix}^T
\begin{pmatrix}
  Ax^1 & \dots & Ax^{N}
\end{pmatrix}\right)
\geq 0,
 \end{align*}
}}
where the matrix $F(t, c_1)$ is as follows,
\[
  F(t, c_1)=\scalemath{1}{\left(\begin{array}{cccccccccc}
      2c_1 & 0 & 0  &  0       & \dots & 0  & 0 & \dots & 0 & t-c_1\\
      0 & \alpha_2 & \beta_2  &  0       & \dots & 0  & 0 & \dots & 0 & -t\\
      0 & \beta_2  & \alpha_3 & \beta_3  & \dots & 0  & 0 &  \dots & 0 & t\\
       \vdots &\vdots & \vdots & \vdots  & \vdots & \vdots & \vdots& \vdots & \vdots & \vdots\\
        0 & 0 & 0 & 0 & \dots & \alpha_k & \beta_k & \dots & 0 & t\\
        \vdots & \vdots & \vdots & \vdots  & \vdots & \vdots & \vdots& \vdots & \vdots & \vdots\\
         0 & 0 & 0  &  0       & \dots & 0  & 0 & \dots & \alpha_{N-1} & \beta_{N-1}\\
         t-c_1 & -t & t & t & \dots & t  & t &  \dots & \beta_{N-1} &  \alpha_{N}\\
\end{array}\right)},
\]
and
\begin{align*}
&\alpha_k=
\begin{cases}
  6c_1-5t, & {k=2}\\
  2\left(2k^2-3k+1\right)c_1-\left(4k-1\right)t, & {3\leq k\leq N-1},\\
  \left(2N^2-6N+4\right)c_1-2\left(N+\frac{1}{(N-2)^2}-\frac{2}{N+1}-3\right)t, & {k=N},\\
\end{cases}
\\
& \beta_k=\begin{cases}
  2kt-\left(2k^2-k-1\right)c_1, \ \ \  {2\leq k\leq N-2},\\
  (N+\frac{1}{2-N}-1)t-(2N^2-6N+3)c_1, & {k=N-1},\\
\end{cases}
\end{align*}
The rest of the proof proceeds analogously to the proof of Theorem \ref{Th.F}.
\end{proof}

  The following example shows the tightness of this bound.

\begin{example}
Assume that $c_1>0$, $N\geq 4$ and $t\in (0, c_1]$ are given, and  $f, g: \mathbb{R}\to\mathbb{R}$  are defined by,
\begin{align*}
& f(x)=\tfrac{1}{2}\max\left\{-\tfrac{N+1}{N-1}x,x\right\}+\tfrac{c_1}{2}x^2, \\
& g(z)=\tfrac{1}{2}\max\left\{\tfrac{1}{t(N-1)}-z,\tfrac{N-3}{N-1}\left(z-\tfrac{1}{t(N-1)}\right)\right\}.
\end{align*}
Consider the optimization problem
\begin{align*}
\min_{(x, z)\in \mathbb{R}\times\mathbb{R}} f(x)+g(z),\\
\nonumber \st \ Ax+Bz=0.
\end{align*}
where $A=B=I$. The point $(x^\star, z^\star)=(0, 0)$ with Lagrangian multiplier $\lambda^\star=\tfrac{1}{2}$ is an optimal solution.  After performing $N$ iterations of Algorithm \ref{ADMM} with setting $\lambda^0=\tfrac{-1}{2}$ and $ z^0=0$, we have

\begin{align*}
 & x^k=0, \ \ \ \ \ \ \ \ \ \ k\in\{1, \dots, N\}, \\
 & z^k=\begin{cases}
         \tfrac{1}{t(N-1)}, & k\in\{1, \dots, N-1\}, \\
         0, & k=N,
       \end{cases}\\
  & \lambda^k=\begin{cases}
         \tfrac{2k+1-N}{2(N-1)}, & k\in\{1, \dots, N-1\}, \\
         \tfrac{1}{2}, & k=N.
       \end{cases}
\end{align*}
It can be seen that $\left\|A^TB\left(z^N-z^{N-1}\right)\right\|=\tfrac{1}{(N-1)t}=\tfrac{\sqrt{\|\lambda^0-\lambda^\star\|^2+t^2\left\| z^0-z^\star\right\|_B^2}}{(N-1)t}$, which shows that the bound is tight.
\end{example}

 Theorem \ref{Th.M} and \ref{Th.F} address the case that $f$ is strongly convex  relative to $\| .\|_A$ and $g$ is convex. Based on numerical results by solving performance estimation problems including \eqref{P3} we conjecture, under the assumptions of Theorem \ref{Th.M}, if $g$ is $c_2$-strongly convex  relative to $\| .\|_B$, Algorithm \ref{ADMM} enjoys the following convergence rates
\begin{align*}
& D(\lambda^\star)-D(\lambda^N)\leq \frac{\|\lambda^0-\lambda^\star\|^2+t^2\|z^0-z^\star\|_B^2}{4Nt+\tfrac{2c_1c_2}{c_1+c_2}}, \\
&\left\|Ax^N+Bz^N-b\right\|\leq \frac{\sqrt{\|\lambda^0-\lambda^\star\|^2+t^2\| z^0-z^\star\|_B^2}}{Nt+\tfrac{c_1c_2}{c_1+c_2}}.
\end{align*}
We have verified these conjectures numerically for many specific values of the parameters.
\section{Linear convergence of ADMM}\label{sec_L}
In this section we study the linear convergence of ADMM.
The linear convergence of ADMM has been addressed by some authors and some conditions for linear convergence have been proposed, see \cite{deng2016global, liu2018partial, han2022survey, han2018linear, hong2017linear, nishihara2015general, yuan2020discerning}. Two common types of assumptions  employed for proving the linear convergence of ADMM are error bound property and $L$-smoothness. To the best knowledge of authors, most scholars investigated the linear convergence of the sequence $\{(x^k, z^k, \lambda^k)\}$ to a saddle point and there is no result in terms of dual objective value for ADMM. In line with the previous section, we study the linear convergence in terms of dual objective value and we derive some formulas for linear convergence rate by using performance estimation. It is noteworthy to mention that the term "Q-linear convergence" is also employed to describe the linear convergence in the literature.

As mentioned earlier, error bound property is  used by scholars for establishing the linear convergence; see e.g. \cite{liu2018partial,  han2018linear, hong2017linear, pena2021linear, yuan2020discerning}. Let
\begin{align}\label{D_a}
D^a(\lambda):=\min f(x)+g(z)+\langle \lambda, Ax+Bz-b\rangle+\tfrac{a}{2}\|Ax+Bz-b\|^2,
\end{align}
stands for augmented dual objective for the given $a>0$ and $\Lambda^\star$ denotes the optimal solution set of the dual problem. Note that function $D^a$ is an $\tfrac{1}{a}$-smooth function on its domain without assuming strong convexity; see  \cite[Lemma 2.2]{hong2017linear}.

\begin{definition}
The function $D^a$ satisfies the error bound if we have
\begin{align}\label{EB_H}
d_{\Lambda^\star}(\lambda)\leq\tau \|\nabla D^a(\lambda)\|, \ \ \lambda\in\mathbb{R}^r,
\end{align}
for some $\tau>0$.
\end{definition}

Hong et al. \cite{hong2017linear} established the linear convergence by employing error bound property \eqref{EB_H}.

 Recently, some scholars established the linear convergence of gradient methods for $L$-smooth convex functions by replacing strong convexity with some mild conditions, see \cite{necoara2019linear, abbaszadehpeivasti2022conditions, bolte2017error} and references therein. Inspired by these results, we prove the linear convergence of ADMM by using the so-called P\L\ inequality. Concerning differentiability of dual objective, by \eqref{Con_C}, we have
\begin{align}\label{der_D}
 b-A\partial f^*(-A^T\lambda)-B\partial g^*(-B^T\lambda)\subseteq \partial\left(-D(\lambda)\right).
 \end{align}
Note that inclusion \eqref{der_D} holds as an equality under some mild conditions, see e.g. \cite[Chapter 3]{beck2017first}.

\begin{definition}\label{Def_PL}
The function $D$ is said to satisfy the P\L\ inequality  if there exists an $L_p>0$ such that for any $\lambda\in \mathbb{R}^r$ we have
\begin{align}\label{PL}
D(\lambda^\star)-D(\lambda) \leq \tfrac{1}{2L_p} \|\xi\|^2, \ \ \  \xi\in b-A\partial f^*(-A^T\lambda)-B\partial g^*(-B^T\lambda).
\end{align}
\end{definition}

Note that if $f$ and $g$ are strongly convex, then $-D$ is an $L$-smooth convex function with $L\leq \tfrac{\lambda_{\max}(A^TA)}{\mu_1}+\tfrac{\lambda_{\max}(B^TB)}{\mu_2}$. Under this setting,  we have $L_p\leq \tfrac{\lambda_{\max}(A^TA)}{\mu_1}+\tfrac{\lambda_{\max}(B^TB)}{\mu_2}$. This follows from the duality between smoothness and strong convexity and

{\small{
\begin{align*}
\left\| \nabla D(\lambda)-\nabla D(\nu)\right\|\leq \left\| \nabla f^*(-A^T\lambda)-\nabla f^*(-A^T\nu)\right\|_A+
\left\| \nabla g^*(-B^T\lambda)-\nabla g^*(-B^T\nu)\right\|_B\\
 \leq \tfrac{1}{\mu_1}\left\| A^T\lambda- A^T\nu\right\|_A+\tfrac{1}{\mu_2}\left\| B^T\lambda-B^T\nu\right\|_B
\leq  \left(\tfrac{\lambda_{\max}(A^TA)}{\mu_1}+\tfrac{\lambda_{\max}(B^TB)}{\mu_2}\right)\left\| \lambda-\nu\right\|.
\end{align*}
}}

 In the next proposition, we show that definitions \eqref{EB_H} and \eqref{PL} are equivalent.
 \begin{proposition}\label{Pro_El}
Let $L_a=\tfrac{1}{a}$ denote the Lipschitz constant of $\nabla D^a$, where $D^a$ is given in \eqref{D_a}.
\begin{enumerate}[i)]
  \item
  If $D^a$ satisfies the error bound \eqref{EB_H}, then $D$ satisfies the P\L\ inequality with $L_p=\tfrac{1}{L_a\tau^2}$.
  \item
   If $D$ satisfies  the P\L\ inequality, then $D^a$ satisfies the error bound \eqref{EB_H} with $\tau=\tfrac{L_p}{1+aL_p}$.
\end{enumerate}
\end{proposition}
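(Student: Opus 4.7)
The plan is to lean on the fact that $-D^{a}$ is the Moreau envelope of the convex function $-D$ with parameter $a$: consequently $-D^{a}$ is convex and $L_{a}$-smooth, the sets of maximizers of $D$ and of $D^{a}$ coincide (call this common set $\Lambda^{\star}$), and if one sets $\mu^{\star}(\lambda):=\lambda+a\nabla D^{a}(\lambda)$ then $-\nabla D^{a}(\lambda)\in\partial(-D)(\mu^{\star}(\lambda))$ together with the Moreau identity
\[
D(\mu^{\star}(\lambda))\;=\;D^{a}(\lambda)+\tfrac{a}{2}\|\nabla D^{a}(\lambda)\|^{2}.
\]
These three ingredients drive both implications.

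For part (i), starting from the error bound, I would first apply the descent lemma for the $L_{a}$-smooth function $-D^{a}$ between $\lambda$ and its projection $\hat\lambda$ onto $\Lambda^{\star}$; because $\nabla D^{a}(\hat\lambda)=0$, this yields $D^{\star}-D^{a}(\lambda)\leq \tfrac{L_{a}}{2}d_{\Lambda^{\star}}(\lambda)^{2}$. Injecting the error bound then gives $D^{\star}-D^{a}(\lambda)\leq \tfrac{L_{a}\tau^{2}}{2}\|\nabla D^{a}(\lambda)\|^{2}$. To transfer the conclusion to the P\L\ inequality for $D$ itself, I would use monotonicity of $\partial(-D)$ between the pair $\lambda,\mu^{\star}(\lambda)$, which produces $\|\xi\|\geq\|\nabla D^{a}(\lambda)\|$ for every $\xi\in\partial(-D)(\lambda)$, and combine this with the gap estimate $D^{a}(\lambda)-D(\lambda)\leq\tfrac{a}{2}\|\xi\|^{2}$ obtained from concavity of $D$ and an explicit maximization in the definition of $D^{a}$.

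For part (ii), I would apply the P\L\ hypothesis at the point $\mu^{\star}(\lambda)$ with the particular subgradient $-\nabla D^{a}(\lambda)\in\partial(-D)(\mu^{\star}(\lambda))$, obtaining $D^{\star}-D(\mu^{\star}(\lambda))\leq \tfrac{1}{2L_{p}}\|\nabla D^{a}(\lambda)\|^{2}$, and then rewrite the left-hand side via the Moreau identity to get a P\L\ inequality for $D^{a}$ with constant $L_{p}/(1+aL_{p})$. From here $D^{a}$ is a smooth convex function satisfying a P\L\ bound, and the passage from P\L\ to the error bound is standard: one can combine the convexity estimate $D^{\star}-D^{a}(\lambda)\leq\|\nabla D^{a}(\lambda)\|\,d_{\Lambda^{\star}}(\lambda)$ with a quadratic growth inequality derived from P\L\ via a gradient-flow integration argument to produce $d_{\Lambda^{\star}}(\lambda)\leq\tau\|\nabla D^{a}(\lambda)\|$.

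The main obstacle I anticipate is the constant bookkeeping: three mechanisms (the Moreau envelope identities, the subgradient inequality for $-D$, and the smoothness of $-D^{a}$) must be combined carefully so as to land on precisely the constants stated in the proposition, and one has to be deliberate in choosing which subgradient to feed into P\L\ to obtain the sharpest ratio. The rest is a short chain of standard convex-analytic inequalities built on the Moreau-envelope identification.
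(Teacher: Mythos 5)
Your overall framework --- identifying $-D^a$ as the Moreau envelope of $-D$, with prox point $\mu^\star(\lambda)=\lambda+a\nabla D^a(\lambda)$, the inclusion $-\nabla D^a(\lambda)\in\partial(-D)(\mu^\star(\lambda))$, and the identity $D(\mu^\star(\lambda))=D^a(\lambda)+\tfrac a2\|\nabla D^a(\lambda)\|^2$ --- is exactly the structure the paper exploits by hand, and your part (ii) coincides with the paper's argument: apply the P\L\ hypothesis at $\mu^\star(\lambda)$, rewrite via the identity to obtain a P\L\ inequality for $D^a$ with constant $L_p/(1+aL_p)$, and invoke the standard P\L-to-error-bound passage (the paper simply cites the literature for that last step, as you do).

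Part (i), however, as you describe it does not reach the stated constant $L_p=\tfrac{1}{L_a\tau^2}$. You work at the point $\lambda$ itself: from the descent lemma and the error bound you get $D(\lambda^\star)-D^a(\lambda)\le\tfrac{L_a\tau^2}{2}\|\nabla D^a(\lambda)\|^2$, then bound $\|\nabla D^a(\lambda)\|\le\|\xi\|$ by monotonicity and add the gap estimate $D^a(\lambda)-D(\lambda)\le\tfrac a2\|\xi\|^2$. Since $D^a\ge D$ pointwise, the decomposition $D(\lambda^\star)-D(\lambda)=\bigl(D(\lambda^\star)-D^a(\lambda)\bigr)+\bigl(D^a(\lambda)-D(\lambda)\bigr)$ forces you to add the two nonnegative bounds, so your chain yields $D(\lambda^\star)-D(\lambda)\le\tfrac{L_a\tau^2+a}{2}\|\xi\|^2$, i.e.\ $L_p=\tfrac{1}{L_a\tau^2+a}$, strictly weaker than claimed. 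The paper avoids this loss by evaluating the inequality $D(\lambda^\star)-D^a(\nu)\le\tfrac{L_a\tau^2}{2}\|\nabla D^a(\nu)\|^2$ not at $\nu=\lambda$ but at the shifted point $\bar\nu=\lambda+a\xi$, the point whose prox is $\lambda$ (that is, $\mu^\star(\bar\nu)=\lambda$): there $\nabla D^a(\bar\nu)=-\xi$ exactly and $D^a(\bar\nu)=D(\lambda)-\tfrac a2\|\xi\|^2\le D(\lambda)$, so both the norm inequality and the additive gap disappear and one lands directly on $D(\lambda^\star)-D(\lambda)\le\tfrac{L_a\tau^2}{2}\|\xi\|^2$. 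This is precisely the mirror image of the shift you already use in part (ii); with that substitution your argument closes with the correct constant.
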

\begin{proof}
First we prove $i)$. Suppose $\lambda\in \mathbb{R}^r$ and $\xi\in b-A\partial f^*(-A^T\lambda)-B\partial g^*(-B^T\lambda)$. By identity \eqref{Conj_inv}, we have $\xi=b-A\bar x-B\bar z$ for some $(\bar x, \bar z)\in\argmin f(x)+g(z)+\langle \lambda, Ax+Bz-b\rangle$. Due to the smoothness of $D^a$ and \eqref{EB_H}, we get
\begin{align}\label{PpP0}
D^a(\lambda^\star)-D^a(\nu) \leq \tfrac{L_a\tau^2}{2} \|\nabla D^a(\nu)\|^2, \ \ \nu\in\mathbb{R}^r.
\end{align}
 where $\lambda^\star\in \Lambda^\star$ with $d_{\Lambda^\star}=\|\nu-\lambda^\star\|$. Suppose that $\bar \nu=\lambda-a(A\bar x+B\bar z-b)$. As we assume strong duality, we have $D^a(\lambda^\star)=D(\lambda^\star)$. By the definitions of $\bar x, \bar y$, we get
 $$
(\bar x, \bar z)\in\argmin f(x)+g(z)+\langle \bar\nu, Ax+Bz-b\rangle+\tfrac{a}{2}\|Ax+Bz-b\|^2.
$$
 By \cite[Lemma 2.1]{hong2017linear}, we have $\nabla D^a(\bar\nu)=A\bar x+B\bar z-b$. This equality with \eqref{PpP0} imply
$$
D(\lambda^\star)-D(\lambda)\leq D^a(\lambda^\star)-D^a(\bar\nu)\leq  \tfrac{L_a\tau^2}{2} \|A\bar x+B\bar z-b\|^2,
$$
and the proof of $i)$ is complete. \\
Now we establish $ii)$. Let $\lambda$ be in the domain of $\nabla D^a$. By \cite[Lemma 2.1]{hong2017linear}, we have $\nabla D^a(\lambda)=A\bar x+B\bar z-b$ for some
$(\bar x, \bar z)\in\argmin  f(x)+g(z)+\langle \lambda, Ax+Bz-b\rangle+\tfrac{a}{2}\|Ax+Bz-b\|^2$, which implies that
\begin{align}\label{Por_1_PL}
0\in \partial f(\bar x)+A^T\left(\lambda+a(A\bar x+B\bar z-b)\right),  0\in \partial g(\bar z)+B^T\left(\lambda+a(A\bar x+B\bar z-b)\right).
\end{align}
 Supposing $\nu=\lambda+a(A\bar x+B\bar z-b)$. By \eqref{Por_1_PL}, one can infer that
$D(\nu)=f(\bar x)+g(\bar z)+\langle \nu, A\bar x+B\bar z-b\rangle$. In addition, \eqref{Conj_inv} implies that
$b-A\bar x-B\bar z\in b-A\partial f^*(-A^T\nu)-B\partial g^*(-B^T\nu)$. By the P\L\ inequality, we have
$$
\tfrac{1}{2L_p}\left\| A\bar x+B\bar z-b\right\|^2\geq D(\lambda^\star)-D(\nu)=D^a(\lambda^\star)-D^a(\lambda)-\tfrac{a}{2}\left\| A\bar x+B\bar z-b\right\|^2,
$$
where the equality follows from  $D(\nu)=D^a(\lambda)+\tfrac{a}{2}\left\| A\bar x+B\bar z-b\right\|^2$ and $D^a(\lambda^\star)=D(\lambda^\star)$. Hence,
$$
D^a(\lambda^\star)-D^a(\lambda) \leq \left(\tfrac{1}{2L_p}+\tfrac{a}{2} \right) \|\nabla D^a(\lambda)\|^2.
$$
This inequality says that  $D^a$ satisfies the P\L\ inequality. On the other hand, the P\L\ inequality implies the error bound with the same constant, see \cite{bolte2017error}, and the proof is complete.
\end{proof}

In what follows, we employ performance estimation to derive a linear convergence rate for ADMM in terms of dual objective when the P\L\ inequality holds. To this end, we compare the value of dual problem in two consecutive iterations, that is, $\tfrac{D(\lambda^\star)-D(\lambda^2)}{D(\lambda^\star)-D(\lambda^1)}$. The following optimization problem gives the worst-case convergence rate,
\begin{align}\label{Pp2}
\nonumber   \max & \ \tfrac{D(\lambda^\star)-D(\lambda^2)}{D(\lambda^\star)-D(\lambda^1)}\\
\st &  \  \{x^2, z^2, \lambda^2\}\ \textrm{is generated  by Algorithm \ref{ADMM} w.r.t.}\ f, g, A, B, b, \lambda^1, z^1  \\
\nonumber  & \ (x^\star, z^\star) \ \text{is an optimal solution and its Lagrangian multipliers is }\ \lambda^\star\\
\nonumber  & \ D \ \text{satisfies the P\L\ inequality}\\
\nonumber  & \ f\in \mathcal{F}_{c_1}^A(\mathbb{R}^n), g\in \mathcal{F}^B_{c_2}(\mathbb{R}^n)\\
\nonumber  & \ \lambda^1\in\mathbb{R}^r,  z^1\in\mathbb{R}^m, A\in\mathbb{R}^{r\times n}, B\in\mathbb{R}^{r\times m},b\in\mathbb{R}^{r}.
\end{align}

Analogous to our discussion in Section \ref{sec_pep}, we may assume without loss of generality
$b=0$, $\lambda^1=\begin{pmatrix} A & B \end{pmatrix} \begin{pmatrix} x^\dag \\   z^\dag \end{pmatrix}$ and $\lambda^\star=\begin{pmatrix} A & B \end{pmatrix} \begin{pmatrix} \bar x \\ \bar z \end{pmatrix}$ for some $\bar x,  x^\dag, \bar z,  z^\dag$. In addition, we assume that $\hat x^1\in\argmin f(x)+\langle \lambda^1, Ax\rangle$ and
$\hat x^2\in\argmin f(x)+\langle \lambda^2, Ax\rangle$. Hence,
$$
D(\lambda^1)=f(\hat x^1)+g(z^1)+\langle \lambda^1, A\hat x^1+Bz^1\rangle, \ \
D(\lambda^2)=f(\hat x^2)+g(z^2)+\langle \lambda^2, A\hat x^2+Bz^2\rangle,
$$
and
\begin{align}\label{inv_d}
& -A^T\lambda^1\in \partial f(\hat x^1), \ \  & -B^T\lambda^1\in \partial g(z^1),\\
\nonumber & -A^T\lambda^2\in \partial f(\hat x^2), \ \ & -B^T\lambda^2\in \partial g(z^2).
\end{align}
Moreover, by \eqref{inv_d} and \eqref{der_D}, we get
$$
-A\hat x^1-Bz^1\in\partial\left(-D(\lambda^1)\right), \ \ \ \ -A\hat x^2-Bz^2\in\partial\left(-D(\lambda^2)\right).
$$
On the other hand, $\lambda^2=\lambda^1+tAx^2+tBz^2$. Therefore, by using Theorem \ref{T1}, problem \eqref{Pp2} may be relaxed as follows,

{\small{
\begin{align}\label{Pp3}
\nonumber   \max & \ \frac{f^\star+g^\star-\hat f^2-g^2-\langle Ax^\dag+Bz^\dag+tAx^2+tBz^2, A\hat x^2+Bz^2\rangle}{f^\star+g^\star-\hat f^1-g^1-\langle Ax^\dag+Bz^\dag, A\hat x^1+Bz^1\rangle}\\
\nonumber \st &  \  \Big\{\left(\hat x^1, -A^TAx^\dag-A^TBz^\dag, \hat f^1\right), \left(x^2, -A^TAx^\dag-A^TBz^\dag-tA^TAx^2-tA^TBz^1, f^2\right), \\
\nonumber & \left(\hat x^2, -A^TAx^\dag-A^TBz^\dag-tA^TAx^2-tA^TBz^2, \hat f^2\right),
\left(0, -A^TA\bar x-A^TB\bar z, f^\star\right)\Big\} \\
\nonumber &  \textrm{satisfy interpolation constraints \eqref{interp}}  \\
\nonumber & \  \Big\{\left(z^1, -B^TAx^\dag-B^TBz^\dag, g^1\right), \left(z^2, -B^TAx^\dag-B^TBz^\dag-tB^TAx^2-tB^TBz^2, g^2\right), \\
\nonumber & \left(0, -B^TA\bar z-B^TB\bar z, g^\star\right)\Big\} \ \textrm{satisfy interpolation constraints \eqref{interp}}  \\
 & f^*+g^*- \hat f^1- g^1-\left\langle Ax^\dag+Bz^\dag, A\hat x^1+Bz^1 \right\rangle\leq\tfrac{1}{2L_p}\left\| A\hat x^1+Bz^1\right\|^2\\
\nonumber & f^*+g^*- \hat f^2- g^2-\left\langle Ax^\dag+Bz^\dag+tAx^2+tBz^2, A\hat x^2+Bz^2 \right\rangle\leq \tfrac{1}{2L_p}\left\| A\hat x^2+Bz^2\right\|^2\\
\nonumber & A\in\mathbb{R}^{r\times n}, B\in\mathbb{R}^{r\times m}.
\end{align}
}}
By deriving an upper bound for the optimal value of problem \eqref{Pp3} in the next theorem, we establish the linear convergence of ADMM in the presence of the P\L\ inequality.

 \begin{theorem}\label{Th.L}
Let $f\in\mathcal{F}_{c_1}^A(\mathbb{R}^n)$ and $g\in\mathcal{F}^B_{c_2}(\mathbb{R}^m)$ with $c_1, c_2>0$, and let $D$ satisfies the P\L\ inequality with $L_p$. Suppose that $t\leq\sqrt{c_1c_2}$.
\begin{enumerate}[(i)]
  \item
  If $c_1\geq c_2$, then
  \begin{align}\label{Li_r1}
   \frac{D(\lambda^\star)-D(\lambda^2)}{D(\lambda^\star)-D(\lambda^1)}\leq \frac{2c_1c_2-t^2}{2c_1c_2-t^2+L_pt\left(4c_1c_2-c_2t-2t^2\right)},
 \end{align}
 in particular, if $t=\sqrt{c_1c_2}$,
 \begin{align*}
   \frac{D(\lambda^\star)-D(\lambda^2)}{D(\lambda^\star)-D(\lambda^1)}\leq \frac{1}{1+L_p\left(2\sqrt{c_1c_2}-c_2\right)}.
 \end{align*}
  \item
   If $c_1< c_2$, then
  \begin{align}\label{Li_r2}
  & \frac{D(\lambda^\star)-D(\lambda^2)}{D(\lambda^\star)-D(\lambda^1)}\leq  \\
  \nonumber & \ \ \ \ \frac{4 c_2^2-2c_2 \sqrt{c_1c_2}-t^2}{4 c_2^2-2c_2 \sqrt{c_1c_2}-t^2+L_pt\left(8c_2^2+5c_2t-2\sqrt{c_1c_2}\left(1+\tfrac{t}{c_1}\right)\left(2c_2+t\right)\right)}.
  \end{align}
\end{enumerate}
\end{theorem}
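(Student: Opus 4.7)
The plan is to bound the optimal value of the performance estimation relaxation \eqref{Pp3} by exhibiting an explicit non-negative weighted combination of its constraints that certifies the target linear rate. The available constraints are: (a) the $\mathcal{F}^A_{c_1}$-interpolation inequalities from Theorem \ref{T1} among the four triples supported on $\hat x^1,\hat x^2,x^2,x^\star=0$, with subgradients supplied by the ADMM optimality conditions \eqref{OPT} and the dual relations \eqref{inv_d}; (b) the $\mathcal{F}^B_{c_2}$-interpolation inequalities among the three triples at $z^1,z^2,z^\star=0$; and (c) the two P\L\ inequalities at $\lambda^1$ and $\lambda^2$ coming from Definition \ref{Def_PL}.

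The strategy is to choose non-negative multipliers $\alpha_{ij}\ge 0$ on the $f$-inequalities, $\beta_{ij}\ge 0$ on the $g$-inequalities, and $\gamma_1,\gamma_2\ge 0$ on the two P\L\ inequalities so that the weighted sum rearranges to
\[
\bigl(D(\lambda^\star)-D(\lambda^2)\bigr) - \rho\,\bigl(D(\lambda^\star)-D(\lambda^1)\bigr) + S \;\le\; 0,
\]
where $\rho$ equals the factor in \eqref{Li_r1} or \eqref{Li_r2} and $S$ is a sum of squared seminorms in $Ax^2,\,A\hat x^1,\,A\hat x^2,\,Bz^1,\,Bz^2$ with non-negative coefficients (which can then be dropped). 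The cross terms that arise from pairing the $f$-interpolations with the $g$-interpolations must cancel against the identity $\lambda^2=\lambda^1+tAx^2+tBz^2$, and this cancellation is precisely what pins down $\rho$ as a rational function of $c_1,c_2,t,L_p$.

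The proof then splits along the regimes $c_1\ge c_2$ and $c_1<c_2$. In the first regime, the dominant part of the certificate comes from the P\L\ inequality at $\lambda^2$ together with the $\mathcal{F}^A_{c_1}$-interpolation pairing $\hat x^2$ against $x^\star$ and the $\mathcal{F}^B_{c_2}$-interpolation pairing $z^2$ against $z^\star$; the bilinear term $t\langle Ax^2,Bz^2\rangle$ is absorbed via a Young-type inequality, which is the source of the step-size threshold $t\le\sqrt{c_1c_2}$ and of the factor $2c_1c_2-t^2$ in the numerator of \eqref{Li_r1}. In the second regime, since the $B$-side is now the stronger one, the same pattern fails to balance the cross terms, and one must additionally activate the $\mathcal{F}^A_{c_1}$-interpolation between $\hat x^1$ and $\hat x^2$ with a positive weight. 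This extra pairing is what introduces the asymmetric expression $4c_2^2-2c_2\sqrt{c_1c_2}-t^2$ in \eqref{Li_r2}.

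The main obstacle is guessing the multipliers in closed form; the practical route is to solve \eqref{Pp3} numerically for small parameter values, read off the active constraints and dual values, and then conjecture algebraic expressions that interpolate between the boundary cases $t=\sqrt{c_1c_2}$ and $t\to 0$. Once the multipliers are in hand, the remaining work is routine: one verifies directly that the slack term $S$ is a positive semidefinite quadratic form by writing it as a sum of squares such as $\|tAx^2-\kappa\,Bz^2\|^2$ for appropriate $\kappa$, and then simplifies the ratio of the coefficients of $D(\lambda^\star)-D(\lambda^2)$ and $D(\lambda^\star)-D(\lambda^1)$ to match the stated $\rho$. Since the certificate uses only constraints that hold for every realization of the decision variables in \eqref{Pp3}, the resulting bound is valid for all admissible $f,g,A,B,b$, which yields the theorem.
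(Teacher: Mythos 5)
Your overall strategy is exactly the one the paper uses: treat \eqref{Pp3} as the performance estimation relaxation, pick nonnegative multipliers on the interpolation and P\L\ constraints, and verify by hand that the weighted sum equals $\bigl(D(\lambda^\star)-D(\lambda^2)\bigr)-\rho\bigl(D(\lambda^\star)-D(\lambda^1)\bigr)$ plus a nonpositive sum of squares, splitting into the regimes $c_1\geq c_2$ and $c_1<c_2$. So the plan is sound in outline. The genuine gap is that the certificate itself --- the explicit multipliers and the sum-of-squares identity --- is never produced, and that \emph{is} the content of the theorem: the specific rational functions of $c_1,c_2,t,L_p$ in \eqref{Li_r1} and \eqref{Li_r2} cannot be extracted from the plan as written; they only emerge once the identity is exhibited and the coefficient of $D(\lambda^\star)-D(\lambda^1)$ is read off. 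A referee could not check the bound from your text alone.

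Moreover, several of the structural guesses you do commit to are not what the actual certificate looks like, which suggests the numerical-dual-reading step would not have led you where you expect. The paper's certificate uses only the P\L\ inequality at $\lambda^2$ (with weight $1-\alpha$, resp.\ $1-\beta$); the one at $\lambda^1$ is inactive. It does not pair $\hat x^2$ against $x^\star$ or $z^2$ against $z^\star$; the active interpolations are $\hat x^2$ vs.\ $\hat x^1$, the two directions between $x^2$ and $\hat x^2$ (with the two \emph{different} subgradients supplied by \eqref{OPT} and \eqref{inv_d} --- this asymmetry is essential and absent from your description), and $z^2$ vs.\ $z^1$. Finally, the $\hat x^1$--$\hat x^2$ interpolation is active in \emph{both} regimes, so it is not what distinguishes case (ii); what changes there is that the $x^2$/$\hat x^2$ and $z^2$/$z^1$ weights are rescaled by $\sqrt{c_2/c_1}$ and the reverse $g$-interpolation $z^1$ vs.\ $z^2$ is activated with weight $\bigl(\sqrt{c_2/c_1}-1\bigr)\beta$, which is nonnegative precisely when $c_2>c_1$. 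Identifying the correct active set and weights, and then verifying that the residual quadratic form is a sum of squares (which is where the restriction $t\leq\sqrt{c_1c_2}$ actually enters), is the work that remains to be done.
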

\begin{proof}
The argument is based on weak duality. Indeed, by introducing suitable Lagrangian multipliers, we establish that the given convergence rates are  upper bounds for problem \eqref{Pp3}. First, we prove $(i)$. Assume that $\alpha$ denotes the right hand side of inequality \eqref{Li_r1}. As $2c_1c_2-t^2>0$ and $4c_1c_2-c_2t-2t^2>0$, we have $0<\alpha<1$. With some algebra, one can show that

 {\small{
\begin{align*}
& f^\star+g^\star-\hat f^2-g^2-\langle Ax^\dag+Bz^\dag+tAx^2+tBz^2, A\hat x^2+Bz^2\rangle-\\
& \alpha\left( f^\star+g^\star-\hat f^1-g^1-\langle Ax^\dag+Bz^\dag, A\hat x^1+Bz^1\rangle \right)+\\
& \alpha\left( \hat f^2-\hat f^1+\langle Ax^\dag+Bz^\dag, A\hat x^2-A\hat x^1\rangle-\tfrac{c_1}{2}\left\|\hat x^2-\hat x^1\right\|^2_A \right)+\\
& \alpha\left( f^2-\hat f^2+\langle Ax^\dag+Bz^\dag+tAx^2+tBz^2, Ax^2-A\hat x^2\rangle-\tfrac{c_1}{2}\left\|x^2-\hat x^2\right\|^2_A \right)+\\
& \alpha\left( \hat f^2-f^2+\langle Ax^\dag+Bz^\dag+tAx^2+tBz^1, A\hat x^2- Ax^2\rangle-\tfrac{c_1}{2}\left\|\hat x^2- x^2\right\|_A^2 \right)+\\
& \alpha\left( g^2-g^1+\langle Ax^\dag+Bz^\dag, Bz^2-Bz^1\rangle-\tfrac{c_2}{2}\left\|z^2-z^1\right\|_B^2 \right)+\\
& (1-\alpha)\Big(-f^\star-g^\star+\left\langle Ax^\dag+Bz^\dag+tAx^2+tBz^2, A\hat x^2+Bz^2 \right\rangle+ \hat f^2+  g^2+\\
& \  \tfrac{1}{2L_p}\left\| A\hat x^2+Bz^2\right\|^2 \Big)
= \tfrac{-c_1\alpha}{2}\left\| \hat x^1-\hat x^2\right\|^2_A-
\tfrac{c_2\alpha}{2}\left\| Bz^1-Bz^2+\tfrac{t}{c_2}Ax^2-\tfrac{t}{c_2}A\hat x^2\right\|^2-\\
& \ \ \alpha(c_1-\tfrac{t^2}{2c_2})\left\| Ax^2+\tfrac{tc_2}{2c_1c_2-t^2}Bz^2-\tfrac{tc_2-2c_1c_2+t^2}{t^2-2c_1c_2}A\hat x^2\right\|^2.
\end{align*}
}}
 Hence,  we get
\begin{align*}
& f^\star+g^\star-\hat f^2-g^2-\langle Ax^\dag+Bz^\dag+tAx^2+tBz^2, A\hat x^2+Bz^2\rangle\leq \\
& \ \ \alpha \left(f^\star+g^\star-\hat f^1-g^1-\langle Ax^\dag+Bz^\dag, A\hat x^1+Bz^1\rangle\right)
\end{align*}
for any feasible point of problem \eqref{Pp2} and the proof of the first part is complete. For $(ii)$, we proceed analogously to the proof of $(i)$, but with different Lagrange multipliers. Let $\beta$ denote the right hand side of inequality \eqref{Li_r2}, i.e.
$$
\beta=\frac{4 c_2^2-2c_2 \sqrt{c_1c_2}-t^2}{4 c_2^2-2c_2 \sqrt{c_1c_2}-t^2+L_pt\left(8c_2^2+5c_2t-2\sqrt{c_1c_2}\left(1+\tfrac{t}{c_1}\right)\left(2c_2+t\right)\right)}.
$$
It is seen that $0<\beta <1$. By doing some calculus, we have

 {\small{
\begin{align*}
& f^\star+g^\star-\hat f^2-g^2-\langle Ax^\dag+Bz^\dag+tAx^2+tBz^2, A\hat x^2+Bz^2\rangle-\\
& \beta\left( f^\star+g^\star-\hat f^1-g^1-\langle Ax^\dag+Bz^\dag, A\hat x^1+Bz^1\rangle \right)+\\
& \beta\left( \hat f^2-\hat f^1+\langle Ax^\dag+Bz^\dag, A\hat x^2-A\hat x^1\rangle-\tfrac{c_1}{2}\left\|\hat x^2-\hat x^1\right\|^2_A \right)+\\
& \sqrt{\tfrac{c_2}{c_1}}\beta
\left( f^2-\hat f^2+\langle Ax^\dag+Bz^\dag+tAx^2+tBz^2, Ax^2-A\hat x^2\rangle-\tfrac{c_1}{2}\left\|x^2-\hat x^2\right\|_A^2 \right)+\\
& \sqrt{\tfrac{c_2}{c_1}}\beta
\left( \hat f^2-f^2+\langle Ax^\dag+Bz^\dag+tAx^2+tBz^1, A\hat x^2- Ax^2\rangle-\tfrac{c_1}{2}\left\|\hat x^2- x^2\right\|_A^2 \right)+\\
& \sqrt{\tfrac{c_2}{c_1}}\beta
\left( g^2-g^1+\langle Ax^\dag+Bz^\dag, Bz^2-Bz^1\rangle-\tfrac{c_2}{2}\left\|z^2-z^1\right\|_B^2 \right)+\\
& \left(\sqrt{\tfrac{c_2}{c_1}}-1\right)\beta
\bigg( g^1-g^2+\langle Ax^\dag+Bz^\dag+tAx^2+tBz^2, Bz^1-Bz^2\rangle-\\
& \tfrac{c_2}{2}\left\|z^1-z^2\right\|_B^2 \bigg)+(1-\beta)\big(-f^\star-g^\star+\left\langle Ax^\dag+Bz^\dag+tAx^2+tBz^2, A\hat x^2+Bz^2 \right\rangle+\\
& \hat f^2+ g^2+\tfrac{1}{2L_p}\left\| A\hat x^2+Bz^2\right\|^2 \big)\\
& =
 -\tfrac{c_1\beta}{2}\left\| \hat x^1-\hat x^2\right\|^2_A-
\left(\sqrt{c_1c_2}\beta\right)\left\|A x^2-\left(1-\frac{t}{2 \sqrt{c_1c_2}}\right)A\hat x^2+ \frac{t}{2 \sqrt{c_1c_2}}Bz^1\right\|^2-\\
& \left(\frac{\beta-1}{2L_p}+\beta t \left(1-\frac{t}{4\sqrt{c_1c_2}}\right)\right)\bigg\| A\hat x^2-\left(\frac{\beta L_p \left(-2c_2\sqrt{c_1c_2}+4c_2^2-t^2\right)}{-\beta L_p t^2+2 \sqrt{c_1c_2} (2 \beta L_p t+\beta-1)}\right)^{\frac{1}{2}}Bz^1+\\
&\left(\frac{2\left(2\beta c_2L_p\left(t+c_2\right)+\sqrt{c_1c_2}\left(\beta-\beta L_p c_2-1\right)\right)}{-\beta L_p t^2+2 \sqrt{c_1c_2} (2 \beta L_p t+\beta-1)}\right)^{\frac{1}{2}}Bz^2\bigg\|^2.
\end{align*}
}}
The rest of the proof is similar to that of the former case.
\end{proof}

We computed the bounds in Theorem \ref{Th.L} by selecting suitable Lagrangian multipliers and solving the semidefinite formulation of problem \eqref{Pp3} by hand. The semidefinite formulation is formed analogous to problem \eqref{P4}.  Note that the optimal value of problem \eqref{Pp3} may be smaller than the bounds introduced in Theorem \ref{Th.L}. Indeed, our aim was to provide a concrete mathematical proof for the linear convergence rate.     However, the linear convergence rate factor is not necessarily tight.  Needless to say that the optimal value of problem \eqref{Pp3} also does not necessarily give the tight convergence factor as it is just a relaxation of problem  \eqref{Pp2}.

Recently the authors showed that the P\L\ inequality is necessary and sufficient conditions for the linear convergence of the gradient method with constant step lengths for  $L$-smooth  function; see\cite[Theorem 5]{abbaszadehpeivasti2022conditions}. In what follows, we  establish that the P\L\ inequality is a necessary  condition for the linear convergence of ADMM. Firstly, we present a lemma that is very useful for our proof.

\begin{lemma}\label{Lemma2}
Let $f\in\mathcal{F}_{c_1}^A(\mathbb{R}^n)$ and $g\in\mathcal{F}^B_{c_2}(\mathbb{R}^m)$. Consider Algorithm \ref{ADMM}. If  $(\hat x^1, z^1)\in\argmin f(x)+g(z)+\langle \lambda^1, Ax+Bz-b\rangle$, then
\begin{align}\label{inq_gr_2}
 \langle A\hat x^1+Bz^1-b, Ax^2+Bz^2-b\rangle \leq \left\|A\hat x^1+Bz^1-b\right\|^2.
\end{align}
\end{lemma}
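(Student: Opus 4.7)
The plan is to compare subgradients of $f$ at the two points $\hat x^1$ and $x^2$, and subgradients of $g$ at $z^1$ and $z^2$, then combine the resulting monotonicity inequalities via a quadratic-form argument.

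First I would extract the relevant optimality conditions. Since $(\hat x^1,z^1)$ minimizes the Lagrangian at $\lambda^1$, one has $-A^T\lambda^1\in\partial f(\hat x^1)$ and $-B^T\lambda^1\in\partial g(z^1)$. From the ADMM updates (see \eqref{OPT} and \eqref{OPT_R}), one has $-A^T\lambda^1-tA^T(Ax^2+Bz^1-b)\in\partial f(x^2)$ and $-B^T\lambda^2\in\partial g(z^2)$ with $\lambda^2=\lambda^1+t(Ax^2+Bz^2-b)$.

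Next I would apply the interpolation inequality \eqref{interp} (monotonicity of $\partial f$ and $\partial g$) to the pairs $(\hat x^1,x^2)$ and $(z^1,z^2)$. The $f$-pair yields
\[
t\langle Ax^2+Bz^1-b,\,A(\hat x^1-x^2)\rangle \;\ge\; c_1\|\hat x^1-x^2\|_A^2\;\ge\;0,
\]
while the $g$-pair, after substituting $\lambda^1-\lambda^2=-t(Ax^2+Bz^2-b)$, yields
\[
t\langle Ax^2+Bz^2-b,\,B(z^1-z^2)\rangle \;\ge\; c_2\|z^1-z^2\|_B^2\;\ge\;0.
\]
Dividing by $t>0$ gives two inequalities which hold already under mere convexity ($c_1=c_2=0$), so no strong-convexity moduli are actually needed.

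Finally I would introduce $u:=A\hat x^1+Bz^1-b$, $v:=Ax^2+Bz^2-b$, $\Delta_x:=A(\hat x^1-x^2)$, $\Delta_z:=B(z^1-z^2)$, so that $u-v=\Delta_x+\Delta_z$. Rewriting the two inequalities in these variables produces $\langle u,\Delta_x\rangle\ge\|\Delta_x\|^2$ and $\langle u,\Delta_z\rangle\ge\langle\Delta_x,\Delta_z\rangle+\|\Delta_z\|^2$. Adding them yields
\[
\langle u,u-v\rangle\;\ge\;\|\Delta_x\|^2+\langle\Delta_x,\Delta_z\rangle+\|\Delta_z\|^2
\;=\;\tfrac12\|\Delta_x+\Delta_z\|^2+\tfrac12(\|\Delta_x\|^2+\|\Delta_z\|^2)\;\ge\;0,
\]
which is exactly the claimed estimate $\langle u,v\rangle\le\|u\|^2$. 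The only nontrivial step is recognising that the cross term $\langle\Delta_x,\Delta_z\rangle$ is controlled by the two diagonal squares, but this follows from the identity above (or equivalently the elementary bound $|\langle\Delta_x,\Delta_z\rangle|\le\tfrac12(\|\Delta_x\|^2+\|\Delta_z\|^2)$), so I anticipate no real obstacle.
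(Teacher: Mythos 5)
Your proposal is correct and takes essentially the same route as the paper: the paper sums the same four one-sided subgradient inequalities (which pair up into exactly your two monotonicity inequalities for $\partial f$ and $\partial g$), also reduces to $c_1=c_2=0$, and certifies nonnegativity of the same quadratic form via the equivalent decomposition $\tfrac34\|\Delta_z\|^2+\|\Delta_x+\tfrac12\Delta_z\|^2$ in place of your $\tfrac12\|\Delta_x+\Delta_z\|^2+\tfrac12\|\Delta_x\|^2+\tfrac12\|\Delta_z\|^2$.
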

\begin{proof}
 Without loss of generality we assume that $c_1=c_2=0$.  By optimality conditions, we have
 \begin{align*}
& f(\hat x^1)-\langle \lambda^1, Ax^2-A\hat x^1\rangle\leq f(x^2), \ \ \
g(z^1)-\langle \lambda^1, Bz^2-Bz^1\rangle\leq g(z^2),\\
& f(x^2)-\langle \lambda^1+t(Ax^2+Bz^1-b), A\hat x^1-Ax^2\rangle\leq f(\hat x^1),\\
& g(z^2)-\langle \lambda^1+t(Ax^2+Bz^2-b), Bz^1-Bz^2\rangle\leq g(z^1).
\end{align*}
By using these inequities, we get
{\small{
\begin{align*}
0\leq &
 \tfrac{1}{t}\left(f(x^2)-f(\hat x^1)+\left\langle \lambda^1,  Ax^2-A\hat x^1\right\rangle \right)+
 \tfrac{1}{t}\left( g(z^2)-g(z^1)+\left\langle \lambda^1, Bz^2-Bz^1\right\rangle\right)+
 \\
 & \tfrac{1}{t}\left(f(\hat x^1)-f(x^2)+\left\langle \lambda^1+t(Ax^2+Bz^1-b), A\hat x^1- Ax^2\right\rangle \right)+\\
 & \tfrac{1}{t}\left( g(z^1)-g(z^2)+\left\langle \lambda^1+t(Ax^2+Bz^2-b), Bz^1-Bz^2\right\rangle\right)\\
 =&\left\|A\hat x^1+Bz^1-b\right\|^2-\left\langle A\hat x^1+Bz^1-b, Ax^2+Bz^2-b\right\rangle-\tfrac{3}{4}\left\|B\left(z^1-z^2\right)\right\|^2-
 \\
&\left\|A\left(\hat x^1-x^2\right)+\tfrac{1}{2}B\left(z^1-z^2\right)\right\|^2.
\end{align*}
}}
Hence,  we have
$$
 \frac{\langle A\hat x^1+Bz^1-b, Ax^2+Bz^2-b\rangle}{\left\|A\hat x^1+Bz^1-b\right\|^2}\leq 1,
$$
which completes the proof.
\end{proof}

The next theorem establishes that the P\L\ inequality is a necessary condition for the linear convergence of ADMM.

\begin{theorem}\label{Th.PL}
Let $f\in\mathcal{F}^A_{c_1}(\mathbb{R}^n)$ and $g\in\mathcal{F}^B_{c_2}(\mathbb{R}^m)$. If Algorithm \ref{ADMM} is linearly convergent with respect to the dual objective value, then $D$ satisfies the P\L\ inequality.
\end{theorem}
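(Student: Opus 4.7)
The plan is to derive the P\L\ inequality at an arbitrary $\lambda$ by running exactly one extra ADMM step from a cleverly chosen starting configuration, so that the current iterate equals $\lambda$, and then combining Lemma \ref{Lemma2} with the concavity of $D$ and the linear-convergence hypothesis.

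Fix $\lambda\in\mathbb{R}^r$ and an arbitrary $\xi\in b-A\partial f^*(-A^T\lambda)-B\partial g^*(-B^T\lambda)$. By \eqref{Conj_inv} there exist $\hat x,\bar z$ with $-A^T\lambda\in\partial f(\hat x)$, $-B^T\lambda\in\partial g(\bar z)$ and $\xi=b-A\hat x-B\bar z$, so $(\hat x,\bar z)\in\argmin\{f(x)+g(z)+\langle\lambda,Ax+Bz-b\rangle\}$ and $\xi$ is a subgradient of $-D$ at $\lambda$. If $\xi=0$ then $\lambda\in\Lambda^\star$ and the P\L\ inequality is trivial, so assume $\xi\neq 0$.

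Next, set $\lambda^0=\lambda+t\xi$ and $z^0=\bar z$ and run Algorithm \ref{ADMM}. The choice $x^1=\hat x$ is admissible for the first subproblem because its optimality condition $0\in\partial f(x^1)+A^T\lambda^0+tA^T(Ax^1+Bz^0-b)$ reduces to $0\in\partial f(\hat x)+A^T\lambda$ after the cancellation $t\xi+t(A\hat x+B\bar z-b)=0$; similarly $z^1=\bar z$ is admissible for the second subproblem, and hence $\lambda^1=\lambda^0+t(A\hat x+B\bar z-b)=\lambda$. One further ADMM iteration from $(\lambda^1,z^1)=(\lambda,\bar z)$ produces some $(x^2,z^2,\lambda^2)$. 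Concavity of $D$ with $\xi\in\partial(-D)(\lambda^1)$ gives
\[
D(\lambda^2)-D(\lambda)\;\le\;-\langle\xi,\lambda^2-\lambda\rangle\;=\;t\langle A\hat x+B\bar z-b,\,Ax^2+Bz^2-b\rangle\;\le\;t\|\xi\|^2,
\]
where the last inequality is Lemma \ref{Lemma2} applied at $\lambda^1$ with $(\hat x^1,z^1)=(\hat x,\bar z)$.

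To conclude, let $q\in(0,1)$ be the factor of linear convergence; then $D(\lambda^2)-D(\lambda)\ge(1-q)\bigl(D(\lambda^\star)-D(\lambda)\bigr)$, and combining with the previous display yields $D(\lambda^\star)-D(\lambda)\le\tfrac{t}{1-q}\|\xi\|^2$, i.e.\ the P\L\ inequality with $L_p=(1-q)/(2t)$. The main subtlety is the realization step in the third paragraph: one must be sure that $\hat x$ and $\bar z$ qualify as outputs of the two ADMM subproblems even when those subproblems fail to have unique minimizers (which happens whenever $A$ or $B$ lack full column rank). This is legitimate under the paper's standing assumption that each subproblem attains its minimum, since any first-order stationary point is then an admissible selection; without that assumption one would have to patch the argument with a small perturbation/approximation.
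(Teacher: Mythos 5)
Your proposal is correct and follows essentially the same route as the paper: construct $\lambda^0=\lambda-t(A\hat x+B\bar z-b)$, $z^0=\bar z$ so that one ADMM step can output $\lambda^1=\lambda$, then chain concavity of $D$, Lemma \ref{Lemma2}, and the linear-convergence hypothesis to get $D(\lambda^\star)-D(\lambda)\le\tfrac{t}{1-\gamma}\|\xi\|^2$. Your explicit verification that $x^1=\hat x$ and $z^1=\bar z$ are admissible selections of the (possibly non-unique) subproblem minimizers is a welcome elaboration of a step the paper only asserts.
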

\begin{proof}
  Consider $\lambda^1\in\mathbb{R}^r$ and $\xi\in b-A\partial f^*(-A^T\lambda^1)-B\partial g^*(-B^T\lambda^1)$. Hence,  $\xi=b-A\hat x^1-Bz^1$ for some $(\hat x^1, z^1)\in\argmin f(x)+g(z)+\langle \lambda, Ax+Bz-b\rangle$.
   If one sets $z^0=z^1$ and $\lambda^0=\lambda^1-t(A\hat x^1+Bz^1-b)$ in Algorithm \ref{ADMM}, the algorithm may generate $\lambda^1$.  As  Algorithm \ref{ADMM} is linearly convergent, there exist $\gamma\in [0, 1)$ with
   $$
   D(\lambda^\star)-D(\lambda^2)\leq \gamma \left( D(\lambda^\star)-D(\lambda^1) \right).
   $$
   So, we have
     $$
   (1-\gamma)\left( D(\lambda^\star)-D(\lambda^1)\right) \leq D(\lambda^2)-D(\lambda^1)\leq \left\langle A\hat x^1+Bz^1-b, \lambda^2-\lambda^1\right\rangle,
   $$
   where the last inequality follows  from the concavity of the function $D$. Since $\lambda^2-\lambda^1=t(Ax^2+Bz^2-b)$, Lemma \ref{Lemma2} implies that
   $$
   D(\lambda^\star)-D(\lambda^1)\leq \tfrac{t}{1-\gamma} \|\xi\|^2,
   $$
   so $D$ satisfies the P\L\ inequality.
\end{proof}

   Another assumption used for establishing  linear convergence is $L$-smoothness; see for example \cite{nishihara2015general, deng2016global, giselsson2016linear, davis2017faster}. Deng et al. \cite{deng2016global} show that the sequence $\{(x^k, z^k, \lambda^k)\}$  is convergent linearly to a saddle point under  Scenario 1 and 2 given in Table \ref{table1}.

\begin{table}[H]
\caption{Scenarios leading to linear convergence rates \label{table1}}
\centering
\begin{tabular}{l l l l l l}
\hline
 Scenario & Strong convexity & Lipschitz continuity & Full row rank  \\ \hline
  1 & $f, g$ & $\nabla f$           &  $A$ \\
  2 & $f, g$ & $\nabla f, \nabla g$ &  -  \\
  3 & $f$ & $\nabla f, \nabla g$ &  $B^T$  \\
    \hline
\end{tabular}
\end{table}

It is worth mentioning that Scenario 1 or Scenario 2  implies strong convexity of the dual objective function and therefore the P\L\ inequality is resulted, see \cite{abbaszadehpeivasti2022conditions}. Hence, Theorem \ref{Th.L} implies the linear convergence in terms of dual value under Scenario 1 or Scenario 2. Deng et al. \cite{deng2016global} studied the linear convergence under Scenario 3, but they just proved the linear convergence of the sequence $\{(x^k, Bz^k, \lambda^k)\}$. In the next section, we investigate the R-linear convergence without assuming $L$-smoothness of $f$. Indeed, we establish the R-linear convergence when $f$ is strongly convex, $g$ is $L$-smooth and $B$ has full row rank.

  Note that the P\L\ inequality does not imply necessarily Scenario 1 or Scenario 2. Indeed, consider the following optimization problem,
\begin{align*}
&\min \ f(x)+g(z),\\
&\st  \ x+z=0,
\\ &  \ \ \ \ \ \ \ x,z\in \mathbb{R}^n,
\end{align*}
where $f(x)=\tfrac{1}{2}\|x\|^2+\|x\|_1$ and $g(z)=\tfrac{1}{2}\|z\|^2+\|z\|_1$. With some algebra, one may show that $D(\lambda)=\sum_{i=1}^{n} h(\lambda_i)$ with
$$
h(s)=
\begin{cases}
-(s-1)^2, & s>1\\
  0, &   |s|\leq 1\\
  -(s+1)^2, & s<-1.
\end{cases}
$$
Hence, the P\L\ inequality holds for $L_p=\tfrac{1}{2}$  while neither $f$ nor $g$ is $L$-smooth.

As mentioned earlier the performance estimation problem including the P\L\ inequality at finite set of points is a relaxation for computing the worst-case convergence rate. Contrary to  Theorem \ref{Th.L}, we could not manage to prove the linear convergence of primal and dual residuals under the assumptions of Theorem \ref{Th.L} by employing performance estimation.

\section{R-linear convergence of ADMM}\label{Sec.R}
In this section, we study the R-linear convergence of ADMM. Recall that ADMM enjoys R-linear convergent in terms of dual objective value if
$$
D(\lambda^\star)-D(\lambda^N)\leq \rho\gamma^N,
$$
for some $\rho\geq 0$ and $\gamma\in [0, 1)$.

We investigate the R-linear convergence under the following scenarios:
\begin{itemize}
  \item (S1):
  $f\in\mathcal{F}_{c_1}^A(\mathbb{R}^n)$ is $L$-smooth with $c_1>0$ and $A$ has full row rank;
  \vspace{.2cm}
  \item (S2):
   $f\in\mathcal{F}_{c_1}^A(\mathbb{R}^n)$ with $c_1>0$, $g$ is $L$-smooth and $B$ has full row rank.
\end{itemize}

Our technique for proving the R-linear convergence is based on establishing the linear convergence of the sequence $\{V^k\}$ given by
\begin{align}\label{Lyapunov}
  V^k=\|\lambda^k-\lambda^\star\|^2+t^2\left\|z^k-z^\star\right\|_B^2.
\end{align}
Note that $V^k$ is called Lyapunov function for ADMM; see \cite{boyd2011}.

First we consider the case that $f$ is $L$-smooth and $c_1$-strongly convex relative to $A$. The following proposition establishes the linear convergence of $\{V^k\}$.
\begin{proposition}\label{RL_p1}
Let $f\in\mathcal{F}_{c_1}^A(\mathbb{R}^n)$ be $L$-smooth with $c_1>0$, $g\in\mathcal{F}_{0}(\mathbb{R}^m)$ and let $A$ has full row rank. If  $t< \sqrt{\tfrac{c_1 L}{\lambda_{\min}(AA^T)}}$, then
\begin{align}\label{RL_B1}
V^{k+1}\leq \left(1-\tfrac{2c_1 t}{c_1d+2c_1 t+ t^2} \right) V^k,
\end{align}
where $d=\tfrac{L}{\lambda_{\min}(AA^T)}$.
\end{proposition}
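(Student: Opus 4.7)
My plan is a Lyapunov contraction argument in the performance-estimation style: I will find nonnegative multipliers for a collection of valid inequalities whose weighted sum certifies $V^{k+1} \leq \gamma V^k$ with $\gamma = 1 - \tfrac{2c_1 t}{c_1 d + 2c_1 t + t^2}$. Invoking the translation invariance of problem \eqref{P} used in Section \ref{sec_pep}, I assume without loss of generality that $b = 0$, $x^\star = 0$, $z^\star = 0$, so that $V^k = \|\lambda^k - \lambda^\star\|^2 + t^2\|Bz^k\|^2$ and the dual update reads $\lambda^{k+1} - \lambda^\star = (\lambda^k - \lambda^\star) + t(Ax^{k+1} + Bz^{k+1})$. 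The bulk of the work is bounding the cross term $2t\langle \lambda^{k+1} - \lambda^\star, Ax^{k+1} + Bz^{k+1}\rangle$ that appears when $V^{k+1} - V^k$ is expanded through this identity.

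Three interpolation-type bounds drive the argument. First, $c_1$-strong convexity of $f$ relative to $A$ between $x^{k+1}$ and $x^\star$, using $-A^T\lambda^\star \in \partial f(x^\star)$, gives $\tfrac{c_1}{2}\|Ax^{k+1}\|^2 \leq f(x^{k+1}) - f^\star + \langle \lambda^\star, Ax^{k+1}\rangle$. Second, the $L$-smoothness interpolation between $x^\star$ and $x^{k+1}$, with the gradient $\nabla f(x^{k+1}) = -A^T\lambda^{k+1} - tA^TB(z^k - z^{k+1})$ produced by the $x$-subproblem optimality combined with the dual update, yields $\tfrac{1}{2L}\|A^T w\|^2 \leq f^\star - f(x^{k+1}) - \langle w + \lambda^\star, Ax^{k+1}\rangle$ with $w := (\lambda^{k+1}-\lambda^\star) + tB(z^k - z^{k+1})$; the full row rank of $A$ then upgrades this via $\|A^T w\|^2 \geq \lambda_{\min}(AA^T)\|w\|^2 = \|w\|^2/d$. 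Third, subgradient monotonicity for $g$ at $(z^{k+1}, z^\star)$ produces $\langle \lambda^{k+1} - \lambda^\star, Bz^{k+1}\rangle \leq 0$. Summing the first two with equal weight $2t$ cancels $f$-values and the $\langle \lambda^\star, Ax^{k+1}\rangle$ contributions, and adding $2t$ times the third, followed by conversion of $2t\langle \lambda^{k+1}-\lambda^\star, Ax^{k+1}+Bz^{k+1}\rangle$ to a $V^{k+1}-V^k$ contribution via the dual update and the rewriting $w = (\lambda^k - \lambda^\star) + t(Ax^{k+1} + Bz^k)$, produces the Lyapunov estimate
\begin{equation*}
V^{k+1} - V^k \;\leq\; -c_1 t\|Ax^{k+1}\|^2 - t^2\|Ax^{k+1} + Bz^k\|^2 - \tfrac{t}{d}\|w\|^2.
\end{equation*}

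To close the loop, I rewrite $V^k$ in the triple $(p, u, w)$ with $p := Ax^{k+1}$, $u := Ax^{k+1} + Bz^k$, using $\lambda^k - \lambda^\star = w - tu$ and $Bz^k = u - p$; the target $V^{k+1} \leq \gamma V^k$ then reduces to positive semidefiniteness of an explicit $3 \times 3$ scalar coefficient matrix in the variables $(p, u, w)$. The main obstacle is that the three inequalities above, taken with weight $2t$ alone, do not in general produce a PSD certificate at the sharp factor $\gamma = 1 - \tfrac{2c_1 t}{c_1 d + 2c_1 t + t^2}$ across the entire admissible range of $t$; reaching the stated rate requires enlarging the combination with a further valid inequality --- most naturally, subgradient monotonicity for $g$ between $z^{k+1}$ and $z^k$, equivalently $\langle \lambda^{k+1}-\lambda^k, B(z^{k+1}-z^k)\rangle \leq 0$ --- and tuning the multipliers so that the resulting quadratic form has the rank-one deficiency observed at the extremal ADMM trajectory. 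The correct weights, as produced by the dual of the underlying performance estimation SDP, are rational functions of $c_1, d, t$ whose denominator is precisely $c_1 d + 2 c_1 t + t^2$, and the step-size condition $t < \sqrt{c_1 d}$ is exactly what guarantees the resulting quadratic form is positive semidefinite.
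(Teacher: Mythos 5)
Your setup and your intermediate estimate are correct: with weight $2t$ on relative strong convexity of $f$ (subgradient at $x^\star$), on the smoothness interpolation of $f$ (gradient at $x^{k+1}$), and on the monotonicity of $\partial g$ between $z^{k+1}$ and $z^\star$, one does obtain
$V^{k+1}-V^k\leq -c_1t\|Ax^{k+1}\|^2-t^2\|Ax^{k+1}+Bz^k\|^2-\tfrac{t}{d}\|w\|^2$
with $w=\lambda^k-\lambda^\star+t(Ax^{k+1}+Bz^k)$. You also correctly observe that this cannot certify the stated factor: taking $Ax^{k+1}=0$, $Bz^k=0$, $w\neq 0$, the requirement becomes $\tfrac{2c_1t}{c_1d+2c_1t+t^2}\|w\|^2\leq\tfrac{t}{d}\|w\|^2$, which fails whenever $c_1d>2c_1t+t^2$ (e.g.\ $c_1=1$, $d=10$, $t=1$, well inside the admissible range). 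The gap is entirely in the $\|w\|^2$ coefficient.

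The genuine problem is your proposed repair. Adding monotonicity of $\partial g$ between $z^k$ and $z^{k+1}$, i.e.\ $-t\langle Ax^{k+1}+Bz^{k+1},\,Bz^{k+1}-Bz^k\rangle\geq 0$ with weight $\theta>0$, reintroduces the variable $r=Bz^{k+1}$, which your three-inequality combination had eliminated. The only $r$-dependence of the augmented certificate is $\theta t\left(\|r\|^2+\langle Ax^{k+1}-Bz^k,\,r\rangle\right)$; minimizing over $r$ contributes exactly $-\tfrac{\theta t}{4}\|Ax^{k+1}+Bz^k\|^2$, i.e.\ it \emph{erodes} the $\|Ax^{k+1}+Bz^k\|^2$ term and leaves the deficient $\|w\|^2$ coefficient untouched. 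So no choice of $\theta$ closes the gap, and the claim that tuned multipliers of this family yield the rate is false. (It would also restrict you to $k\geq 1$, since $-B^T\lambda^0\in\partial g(z^0)$ need not hold, whereas the application in Theorem \ref{Th.RL1} needs the contraction starting at $k=0$.) The paper instead enlarges the $f$-inequalities: it uses \emph{both} directions of the $L$-smoothness interpolation between $x^{k+1}$ and $x^\star$ (with unequal weights $\alpha(t^2+c_1d)^2$ and $\alpha(c_1^2d^2-t^4)$, where $\alpha=\tfrac{2t}{(c_1d+t^2)^2-4c_1^2t^2}$) together with relative strong convexity taken with the subgradient at $x^{k+1}$ rather than at $x^\star$. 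The second smoothness inequality contributes additional $-\tfrac{1}{2L}\|A^Tw\|^2$ mass, boosting the $\|w\|^2$ coefficient to $\alpha c_1(c_1d+t^2)\geq\tfrac{2c_1t}{c_1d+2c_1t+t^2}$, which is precisely what makes the certificate close. That is the missing ingredient in your argument.
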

\begin{proof}
 We may assume without loss of generality that $x^\star, z^\star$ and $b$ are zero; see our discussion in Section \ref{sec_pep}. By optimality conditions, we have
 \begin{align*}
& \nabla f(x^{k+1})=-A^T\left(\lambda^{k}+tAx^{k+1}+tBz^k\right),
&& \eta^k=-B^T\lambda^{k+1},\\
& \nabla f(x^\star)=-A^T\lambda^\star,
&& \eta^\star=-B^T\lambda^\star,
\end{align*}
for some $\eta^k\in\partial g(z^{k+1})$ and $\eta^\star\in\partial g(z^\star)$.
Let  $\alpha=\tfrac{2t}{c_1^2d^2+2c_1dt^2-4c_1^2t^2+t^4}$.
By Theorem \ref{T1}, we get
{\footnotesize{
\begin{align*}
&
{\alpha\left(t^2 + c_1d\right)^2}
\left( f(x^{k+1})-f^\star+\left\langle \lambda^\star, Ax^{k+1}\right\rangle- \tfrac{1}{2L}\left\|A^T\left(\lambda^{k}+tAx^{k+1}+tBz^k-\lambda^\star\right)\right\|^2\right)+\\
&
2\alpha t^2{\left(c_1d+t^2\right)}
\left(f^\star-f(x^{k+1})- \tfrac{c_1}{2}\left\|x^{k+1} \right\|^2_A-\left\langle \lambda^{k}+tAx^{k+1}+tBz^k, Ax^{k+1}\right\rangle\right)+\\
 &
 2t\left( g(z^{k+1})-g^\star+\left\langle \lambda^\star, Bz^{k+1}\right\rangle\right)+
 2t\left( g^\star-g(z^{k+1})-\left\langle \lambda^{k+1}, Bz^{k+1}\right\rangle\right)+
\\
 &
 {\alpha\left(c_1^2d^2-t^4\right)}
  \Bigg( f^\star-f(x^{k+1})-\left\langle \lambda^{k}+tAx^{k+1}+tBz^k, Ax^{k+1}\right\rangle-\\
 & \tfrac{1}{2L}\left\|A^T\left(\lambda^{k}+tAx^{k+1}+tBz^k-\lambda^\star\right)\right\|^2\Bigg)
 \geq 0.
\end{align*}
}}
As  $\|A^T\lambda\|^2\geq \tfrac{L}{d}\|\lambda\|^2$ and $\lambda^{k+1}=\lambda^k+tAx^{k+1}+tBz^{k+1}$, we obtain the following inequality after performing some algebraic manipulations
{\footnotesize{
\begin{align*}
&  \left(1-\tfrac{2c t}{cd+2c t+ t^2} \right)\left( \left\|\lambda^k-\lambda^\star\right\|^2+t^2\left\|Bz^k\right\|^2\right)- \left(\left\|\lambda^{k+1}-\lambda^\star\right\|^2+t^2\left\|Bz^{k+1}\right\|^2\right)-\\
& {2\alpha c_1^2t}\left\| \lambda^k-\lambda^\star+\tfrac{t^2+2c_1t+c_1d}{2c_1}Ax^{k+1}+\tfrac{t^2+c_1d}{2c_1}Bz^{k} \right\|^2\geq 0.
\end{align*}
}}
The above inequality implies that
$$
V^{k+1}\leq \left(1-\tfrac{2c_1 t}{c_1d+2c_1 t+ t^2} \right)V^k,
$$
and the proof is complete.
\end{proof}

Note that one can improve bound \eqref{RL_B1} under the assumptions of Proposition \ref{RL_p1} and the $\mu$-strong convexity of $f$ by employing the following known inequality
\begin{align*}
\tfrac{1}{2(1-\tfrac{\mu}{L})}&\left(\tfrac{1}{L}\left\|\nabla f(x)-\nabla f(y)\right\|^2+\mu\left\|x-y\right\|^2-\tfrac{2\mu}{L}\left\langle \nabla f(x)-\nabla f(y),x-y\right\rangle\right)  \\
& \leq f(y)-f(x)-\left\langle \nabla f(x), y-x\right\rangle.
\end{align*}
Indeed, we employed the given inequality but we could not manage to obtain a closed form formula for the convergence rate. The next theorem establishes  the R-linear convergence of ADMM in terms of dual objective value under the assumptions of Proposition \ref{RL_p1}.

\begin{theorem}\label{Th.RL1}
Let $N\geq 4$ and let $A$ has full row rank. Suppose that $f\in\mathcal{F}_{c_1}(\mathbb{R}^n)$ is $L$-smooth with $c_1>0$ and $g\in\mathcal{F}_{0}(\mathbb{R}^m)$. If
$t<\min\{c_1, \sqrt{\tfrac{c_1 L}{\lambda_{\min}(AA^T)}}\}$,
 then
\begin{align*}
D(\lambda^\star)-D(\lambda^N)\leq \rho\left(1-\tfrac{2c_1 t}{c_1d+2c_1 t+ t^2}  \right)^{N},
\end{align*}
where $d=\tfrac{L}{\lambda_{\min}(AA^T)}$ and $\rho=\tfrac{V^0}{16t}\left(1-\tfrac{2c_1 t}{c_1d+2c_1 t+ t^2}  \right)^{-4}$.
\end{theorem}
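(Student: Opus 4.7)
The plan is to combine the sublinear $O(1/N)$ rate from Theorem \ref{Th.M} with the linear Lyapunov decrease from Proposition \ref{RL_p1}, using a restart argument. The key observation is that the hypotheses of both results are satisfied here: since $t \leq c_1$, Theorem \ref{Th.M} is applicable (because $g \in \mathcal F_{0}(\mathbb R^m)$ covers our setting), and since $t < \sqrt{c_1 L/\lambda_{\min}(AA^T)}$, Proposition \ref{RL_p1} is applicable and delivers the contraction factor
\[
1 - r, \qquad r := \frac{2c_1 t}{c_1 d + 2c_1 t + t^2},
\]
on the Lyapunov function $V^k = \|\lambda^k - \lambda^\star\|^2 + t^2\|z^k - z^\star\|_B^2$.

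First I would iterate the one-step bound from Proposition \ref{RL_p1} to obtain $V^{N-4} \leq (1-r)^{N-4}\, V^0$ for $N \geq 4$. Next, I would view the last four iterations of ADMM as a fresh run of Algorithm \ref{ADMM} with starting pair $(\lambda^{N-4}, z^{N-4})$; the key point is that ADMM is a memoryless recursion driven by $(\lambda^{k-1}, z^{k-1})$, so this restart is legitimate and produces the original $\lambda^N$ after exactly four steps. Applying Theorem \ref{Th.M} to this restarted run (whose internal index $N$ is $4$, satisfying the $N \geq 4$ requirement) gives
\[
D(\lambda^\star) - D(\lambda^N) \;\leq\; \frac{\|\lambda^{N-4} - \lambda^\star\|^2 + t^2 \|z^{N-4} - z^\star\|_B^2}{16 t} \;=\; \frac{V^{N-4}}{16t}.
\]

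Combining the two ingredients yields
\[
D(\lambda^\star) - D(\lambda^N) \;\leq\; \frac{(1-r)^{N-4} V^0}{16t} \;=\; \frac{V^0}{16 t (1-r)^{4}} \, (1-r)^{N} \;=\; \rho (1-r)^N,
\]
which is the claimed bound. No nontrivial obstacle should arise, since the work has already been done in Proposition \ref{RL_p1} and Theorem \ref{Th.M}; the only subtlety worth highlighting in writing is the time-invariance/restart property of ADMM that justifies applying Theorem \ref{Th.M} to the truncated sequence $(\lambda^{N-4}, z^{N-4}, \lambda^{N-3}, z^{N-3}, \ldots, \lambda^N, z^N)$, and checking that for $N=4$ the formula reduces consistently to $D(\lambda^\star) - D(\lambda^4) \leq V^0/(16t)$.
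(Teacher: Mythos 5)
Your proposal is correct and follows essentially the same route as the paper: the paper's proof is precisely the two-line chain $D(\lambda^\star)-D(\lambda^N)\leq V^{N-4}/(16t)\leq (1-r)^{N-4}V^0/(16t)$, i.e., Theorem \ref{Th.M} applied to the last four iterations (restarted at $(\lambda^{N-4},z^{N-4})$) combined with the iterated contraction from Proposition \ref{RL_p1}. You merely make explicit the restart/time-invariance justification that the paper leaves implicit.
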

\begin{proof}
By Theorem \ref{Th.M} and Proposition \ref{RL_p1}, one can infer the following inequalities,
\begin{align*}
D(\lambda^\star)-D(\lambda^N)& \leq \tfrac{V^{N-4}}{16t}\\
& \leq \tfrac{V^0}{16t}\left( 1-\tfrac{2c_1 t}{c_1d+2c_1 t+ t^2} \right)^{N-4},
\end{align*}
which shows the desired inequality.
\end{proof}

Nishihara et al. \cite{nishihara2015general} showed the R-linear convergence of ADMM in terms of $\{x^k, z^k, \lambda^k\}$ under the following conditions:
\begin{enumerate}[i)]
  \item
 The function  $f$ is $L$-smooth and   $\mu$-strong with $\mu>0$;
  \item
  The matrix $A$ is invertible and that $B$ has full column rank.
\end{enumerate}
In Theorem \ref{Th.RL1}, we obtain the R-linear convergence under weaker assumptions. Indeed, we replace condition $ii)$ with the matrix $A$ having full row rank.

In the sequel, we investigate the R-linear convergence under the hypotheses of scenario (S2). The next proposition shows the linear convergence of $\{V^k\}$.
\begin{proposition}\label{RL_p2}
Let $f\in\mathcal{F}_{c_1}^A(\mathbb{R}^n)$ with $c_1>0$ and let $g\in\mathcal{F}_{0}(\mathbb{R}^m)$ be  $L$-smooth. Suppose that $B$ has full row rank and $k\geq 1$. If  $t\leq \min\{\tfrac{c_1}{2}, \tfrac{L}{2\lambda_{\min}(BB^T)}\}$, then
\begin{align}\label{RL_B2}
V^{k+1}\leq \left(\tfrac{L}{L+t\lambda_{\min}(BB^T)} \right)^2 V^k.
\end{align}
\end{proposition}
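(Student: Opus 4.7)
My plan is to parallel the proof of Proposition \ref{RL_p1}: combine interpolation-type inequalities from Theorem \ref{T1} with the ADMM update identity, weight them by suitable non-negative multipliers, and extract a sum-of-squares residual. As in Section \ref{sec_pep}, I would first reduce to $b=0$ and $x^\star=z^\star=0$, so that $-A^T\lambda^\star\in\partial f(0)$ and $\nabla g(0)=-B^T\lambda^\star$. The optimality conditions \eqref{OPT_R} then yield $\xi^{k+1}:=-A^T\lambda^{k+1}+tA^TB(z^{k+1}-z^k)\in\partial f(x^{k+1})$ and $\nabla g(z^{k+1})=-B^T\lambda^{k+1}$, and, crucially since $k\geq 1$, also $\nabla g(z^k)=-B^T\lambda^k$.

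I would then assemble three key inequalities. Relative $c_1$-strong convexity of $f$ via \eqref{interp} applied to $(x^{k+1},x^\star)$ gives
\begin{equation*}
c_1\|x^{k+1}\|_A^2\leq -\langle\lambda^{k+1}-\lambda^\star, Ax^{k+1}\rangle+t\langle B(z^{k+1}-z^k), Ax^{k+1}\rangle.
\end{equation*}
Co-coercivity of the $L$-smooth convex $g$ at $(z^{k+1},z^\star)$, combined with $\|B^T\mu\|^2\geq\sigma\|\mu\|^2$ where $\sigma=\lambda_{\min}(BB^T)>0$, gives
\begin{equation*}
\tfrac{\sigma}{L}\|\lambda^{k+1}-\lambda^\star\|^2\leq -\langle\lambda^{k+1}-\lambda^\star, Bz^{k+1}\rangle,
\end{equation*}
and analogously at $(z^k,z^\star)$ --- this is precisely where the hypothesis $k\geq 1$ is used ---
\begin{equation*}
\tfrac{\sigma}{L}\|\lambda^k-\lambda^\star\|^2\leq -\langle\lambda^k-\lambda^\star, Bz^k\rangle.
\end{equation*}
These will be combined with the identity
\begin{equation*}
\|\lambda^k-\lambda^\star\|^2=\|\lambda^{k+1}-\lambda^\star\|^2-2t\langle\lambda^{k+1}-\lambda^\star,Ax^{k+1}+Bz^{k+1}\rangle+t^2\|Ax^{k+1}+Bz^{k+1}\|^2,
\end{equation*}
which follows from $\lambda^{k+1}-\lambda^k=t(Ax^{k+1}+Bz^{k+1})$.

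The final step would be to weight the three inequalities by non-negative scalars $\mu_A,\mu_1,\mu_0$ (depending on $t,L,\sigma$), substitute the identity, and match coefficients in $L^2 V^k-(L+t\sigma)^2 V^{k+1}$. Completing the square in $Ax^{k+1}$, $Bz^k$ and $B(z^{k+1}-z^k)$ should leave only non-negative quadratic terms, yielding $(L+t\sigma)^2 V^{k+1}\leq L^2 V^k$. The hard part will be identifying these multipliers and verifying non-negativity of the residual quadratic form: the contraction factor $(L/(L+t\sigma))^2$ is \emph{quadratic} in $t\sigma/L$, so a single application of co-coercivity (which yields only the weaker linear improvement $(1+2t\sigma/L)^{-1}$) does not suffice, which is why both instances are needed and why $k\geq 1$ appears in the statement. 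The step-size restriction $t\leq\min\{c_1/2,L/(2\sigma)\}$ is expected to be precisely what makes the residual positive semidefinite --- the bound $t\leq c_1/2$ to absorb the cross term $t\langle B(z^{k+1}-z^k),Ax^{k+1}\rangle$ into $c_1\|x^{k+1}\|_A^2$, and $t\leq L/(2\sigma)$ to allow the two co-coercivity inequalities to dominate the residual quadratic $t^2\|Ax^{k+1}+Bz^{k+1}\|^2$ coming from the dual-update identity.
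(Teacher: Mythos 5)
Your overall architecture --- reduce to $b=0$, $x^\star=z^\star=0$, aggregate interpolation inequalities from Theorem \ref{T1} with non-negative multipliers, pass from $\|B^T\mu\|^2$ to $\lambda_{\min}(BB^T)\|\mu\|^2$, substitute the dual update, and complete squares --- is exactly the paper's, and both your strong-monotonicity inequality for $f$ and your first co-coercivity inequality (at the pair $(z^{k+1},z^\star)$) coincide with the sums of the corresponding pairs of interpolation inequalities used there. The gap is in your second co-coercivity inequality. The paper does \emph{not} use the pair $(z^k,z^\star)$; it uses co-coercivity of $\nabla g$ between the \emph{consecutive} iterates $z^k$ and $z^{k+1}$, namely $\tfrac{1}{L}\|B^T(\lambda^{k+1}-\lambda^{k})\|^2\le\langle \lambda^{k+1}-\lambda^{k},B(z^{k}-z^{k+1})\rangle$, and that is where $k\ge1$ enters. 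This is not a cosmetic difference: your three inequalities together with the update identity do not imply the claimed contraction, so no choice of multipliers $\mu_A,\mu_1,\mu_0$ and no completion of squares can close the argument as you have set it up.

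Concretely, set $\sigma=\lambda_{\min}(BB^T)$ and $d=L/\sigma$, and consider (in the aggregated variables) $Ax^{k+1}=0$, $\lambda^{k}-\lambda^\star=p\neq0$, $Bz^{k}=-p/d$, $Bz^{k+1}=-p/t$, so that $\lambda^{k+1}-\lambda^\star=p+t(Ax^{k+1}+Bz^{k+1})=0$. Your inequality for $f$ and your co-coercivity at $(z^{k+1},z^\star)$ hold trivially ($0\le0$), and the one at $(z^{k},z^\star)$ holds with equality. Yet $V^{k}=(1+t^2/d^2)\|p\|^2$ while $V^{k+1}=t^2\|Bz^{k+1}\|^2=\|p\|^2$, so $V^{k+1}/V^{k}=d^2/(d^2+t^2)>d^2/(d+t)^2$, violating \eqref{RL_B2}. (This configuration does violate the consecutive-iterate co-coercivity, since $\langle\lambda^{k+1}-\lambda^{k},B(z^{k}-z^{k+1})\rangle=\|p\|^2(1/d-1/t)<0$ for $t<d$, which confirms that inequality is the essential missing ingredient.) Your heuristic that two applications of co-coercivity are needed because the factor is quadratic in $t\sigma/L$ is on the right track, but the second application must be taken at $(z^{k},z^{k+1})$, not at $(z^{k},z^\star)$; with that replacement the multiplier search does close, as in the paper's proof.
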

\begin{proof}
 Analogous to the proof of Proposition \ref{RL_p1}, we assume that $x^\star=0$, $z^\star=0$ and $b=0$. Due to the optimality conditions, we have
 \begin{align*}
& \xi^{k+1}=-A^T\left(\lambda^{k}+tAx^{k+1}+tBz^k\right),
&&  \xi^\star=-A^T\lambda^\star,\\
& \nabla g(z^k)=-B^T\lambda^k, \ \ \ \ \ \ \
 \nabla g(z^{k+1})=-B^T\lambda^{k+1},
&& \nabla g(z^\star)=-B^T\lambda^\star,
\end{align*}
for some $\xi^{k+1}\in\partial f(x^{k+1})$ and $\xi^\star\in\partial f(x^\star)$.  Suppose that  $d=\tfrac{L}{\lambda_{\min}(BB^T)}$ and $\alpha=\tfrac{2dt}{d+t}$. By Theorem \ref{T1}, we obtain
{\small{
\begin{align*}
&
 \frac{\alpha\left(d^2+t^2\right)}{d^2-t^2}
 \left(f^\star-f(x^{k+1})-\left\langle \lambda^{k}+tAx^{k+1}+tBz^k, Ax^{k+1}\right\rangle-\tfrac{c_1}{2}\left\|x^{k+1} \right\|_A^2\right)+\\
 &
   \frac{\alpha\left(d^2+t^2\right)}{d^2-t^2}
   \left(f(x^{k+1})-f(x^\star)+\left\langle \lambda^\star, Ax^{k+1}\right\rangle-\tfrac{c_1}{2}\left\|x^{k+1} \right\|_A^2\right)+\\
 &
 \alpha
 \left(g(z^{k+1})-g^\star+\left\langle \lambda^\star, Bz^{k+1}\right\rangle-\tfrac{1}{2L}\left\|B^T\left(\lambda^\star-\lambda^{k+1} \right) \right\|^2\right)+\\
  &
   \alpha
    \left(g^\star-g(z^{k+1})-\left\langle \lambda^{k+1}, Bz^{k+1}\right\rangle-\tfrac{1}{2L}\left\|B^T\left(\lambda^\star-\lambda^{k+1} \right) \right\|^2\right)+\\
  &
   \alpha
    \left(g(z^{k})-g(z^{k+1})+\left\langle \lambda^{k+1}, Bz^{k}-Bz^{k+1}\right\rangle-\tfrac{1}{2L}\left\|B^T\left(\lambda^{k+1}-\lambda^{k} \right) \right\|^2\right)+\\
  &
   \alpha
    \left(g(z^{k+1})-g(z^{k})+\left\langle \lambda^{k}, Bz^{k+1}-Bz^{k}\right\rangle-\tfrac{1}{2L}\left\|B^T\left(\lambda^{k+1}-\lambda^{k} \right) \right\|^2\right)\geq 0.
\end{align*}
}}
By employing  $\|B^T\lambda\|^2\geq \tfrac{L}{d}\|\lambda\|^2$ and $\lambda^{k+1}=\lambda^k+tAx^{k+1}+tBz^{k+1}$,  the aforementioned inequality can be expressed as follows after some algebraic manipulation,
{\small{
\begin{align*}
&
  \tfrac{-\alpha^2}{4}\left\| \left(\frac{2t^2}{d^2-dt}\right)Ax^{k+1}+Bz^k-\left(1+\tfrac{t}{d}\right)Bz^{k+1}
  \right\|^2-\frac{2 t \left(d^2+t^2\right) \left(cd^2-d t (c+t)-t^3\right)}{\left(d^2-t^2\right)^2}\\
&
 \left\|Ax^{k+1}\right\|^2-\tfrac{\alpha^2}{4d^2}\left\| \lambda^k-\lambda^\star+\left(\frac{2 d^2-(d-t)^2}{d-t}\right)Ax^{k+1}+\left(d+t\right)Bz^{k+1} \right\|^2\\
&+\left(\tfrac{d}{d+t} \right)^2\left( \left\|\lambda^k-\lambda^\star\right\|^2+t^2\left\|Bz^k\right\|^2\right)- \left(\left\|\lambda^{k+1}-\lambda^\star\right\|^2+t^2\left\|Bz^{k+1}\right\|^2\right)\geq 0.
\end{align*}
}}
Hence, we have
$$
V^{k+1}\leq \left(\tfrac{d}{d+t} \right)^2 V^k,
$$
and the proof is complete.
\end{proof}

As the sequence $\{V^k\}$ is not increasing \cite[Convergence Proof]{boyd2011}, we have $V^1\leq V^0$. Thus,  by using Theorem \ref{Th.M} and Proposition \ref{RL_p2}, one can infer the following theorem.

\begin{theorem}\label{Th.RL2}
Let $f\in\mathcal{F}_{c_1}^A(\mathbb{R}^n)$  with $c_1>0$ and let $g\in\mathcal{F}_{0}(\mathbb{R}^m)$ be  $L$-smooth. Assume that $N\geq 5$ and $B$ has full row rank. If
$t<\min\{\tfrac{c_1}{2}, \tfrac{L}{2\lambda_{\min}(BB^T)} \}$, then
\begin{align}\label{RL_B2}
D(\lambda^\star)-D(\lambda^N)\leq \rho\left(\tfrac{L}{L+t\lambda_{\min}(BB^T)} \right)^{2N},
\end{align}
where $\rho=\tfrac{V^0}{16t}\left(\tfrac{L}{L+t\lambda_{\min}(BB^T)} \right)^{-10}$.
\end{theorem}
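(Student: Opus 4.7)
The plan is to combine the $O(1/N)$ non-ergodic dual-value bound of Theorem \ref{Th.M} with the geometric decay of the Lyapunov sequence $\{V^k\}$ furnished by Proposition \ref{RL_p2}. Since the ADMM iteration is memoryless, the tail $(\lambda^k,z^k)_{k\geq m}$ may itself be viewed as an ADMM run initialized at $(\lambda^m,z^m)$. I would apply Theorem \ref{Th.M} to the last four iterations, taking $m=N-4$, to obtain
\[
D(\lambda^\star)-D(\lambda^N)\leq \frac{\|\lambda^{N-4}-\lambda^\star\|^2+t^2\|z^{N-4}-z^\star\|_B^2}{4\cdot 4\cdot t}=\frac{V^{N-4}}{16t}.
\]
The assumption $N\geq 5$ is precisely what makes $N-4\geq 1$, matching the admissible index range for the contraction used in the next step.

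Next, I would invoke the linear decay of $V^k$. The step-size hypothesis $t<\min\{c_1/2,\,L/(2\lambda_{\min}(BB^T))\}$ is exactly what Proposition \ref{RL_p2} requires, so setting $\gamma:=L/(L+t\lambda_{\min}(BB^T))$ we get $V^{k+1}\leq \gamma^{2}V^k$ for every $k\geq 1$. Iterating this contraction from $k=1$ up to $k=N-5$, and combining it with the standard monotonicity $V^1\leq V^0$ of the Lyapunov function recalled just before the theorem statement (via \cite{boyd2011}), yields
\[
V^{N-4}\leq \gamma^{2(N-5)}V^{1}\leq \gamma^{2(N-5)}V^{0}.
\]

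The remaining step is purely arithmetic: substituting into the first inequality and pulling out a factor of $\gamma^{-10}$ gives
\[
D(\lambda^\star)-D(\lambda^N)\leq \frac{V^0}{16t}\,\gamma^{2(N-5)}=\frac{V^0}{16t}\gamma^{-10}\cdot\gamma^{2N}=\rho\,\gamma^{2N},
\]
which is the stated bound with the advertised $\rho$. No new inequality has to be derived; the task is one of assembly. The only points requiring attention are verifying that the step-size restriction simultaneously implies $t<c_1$ (so Theorem \ref{Th.M} applies) and the pair $t\leq c_1/2,\ t\leq L/(2\lambda_{\min}(BB^T))$ (so Proposition \ref{RL_p2} applies), together with the index bookkeeping that forces $N\geq 5$ so that at least one application of the contraction is possible on top of the base case $V^1\leq V^0$. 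Both checks are immediate, so there is no substantive obstacle; the theorem is a direct corollary, as the authors advertise in the sentence preceding the statement.
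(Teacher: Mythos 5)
Your proposal is correct and follows exactly the route the paper intends: apply Theorem \ref{Th.M} with four iterations to the tail started at $(\lambda^{N-4},z^{N-4})$ to get $D(\lambda^\star)-D(\lambda^N)\leq V^{N-4}/(16t)$, then contract $V^k$ via Proposition \ref{RL_p2} for $k\geq 1$ and bridge to $V^0$ with the monotonicity $V^1\leq V^0$, which is precisely why the paper records that fact just before the statement and why the exponent offset is $-10$ rather than $-8$. Nothing further is needed.
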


In the same line, one can infer the R-linear convergence in terms of primal and dual residuals under the assumptions of Theorem \ref{Th.RL1} and Theorem \ref{Th.RL2}. In this section, we proved the linear convergence of $\{V^k\}$ under two scenarios (S1) and (S2). By \eqref{Con_C}, it is readily seen that function $-D$ is strongly convex under the hypotheses of both scenarios (S1) and (S2). Therefore, both scenarios imply the  P\L\ inequality. One may wonder that if the  P\L\ inequality and the strong convexity of $f$ imply the linear of $\{V^k\}$. By using performance estimation, we could not establish such an implication.

As mentioned above, function $-D$  under both scenarios are $\mu$-strongly convex. Hence, the optimal solution set of the dual problem is unique and one can infer the R-linear convergence of $\lambda^N$ by using Theorem \ref{Th.RL1} (Theorem \ref{Th.RL2}) and the known inequality,
$$
\tfrac{\mu}{2}\left\|\lambda^N-\lambda^\star\right\|^2\leq D(\lambda^\star)-D(\lambda^N).
$$

\section*{Concluding remarks}
In this paper we developed performance estimation framework to handle dual-based methods. Thanks to this framework, we could obtain some tight convergence rates for ADMM. This framework may be exploited for the analysis of other variants of ADMM in the ergodic and non-ergodic sense. Moreover, similarly to \cite{kim2016optimized}, one can apply this framework for introducing and analyzing new accelerated ADMM variants. Moreover, most results hold for any arbitrary positive step length, $t$, but we managed to get closed form formulas for some interval of positive numbers.
%
%

\bibliographystyle{spmpsci}      
\bibliography{references}
\appendix
\section{Appendix}\label{appen1}

\begin{lemma}\label{Lemma2}
Let $N\geq 4$ and $t, c_1\in\mathbb{R}$. Let $D(t, c_1)$ be $N\times N$ symmetric matrix given in Theorem \ref{Th.F}. If $c_1>0$ is given, then
$$
[0, c_1]\subseteq \{t: D(t, c_1)\succeq 0\}.
$$
\end{lemma}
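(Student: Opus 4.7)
The plan is to mirror the two-point strategy used in the proof of Lemma \ref{Lemma1}. Since $\{t : D(t, c_1) \succeq 0\}$ is convex (it is the preimage of the positive semidefinite cone under the affine map $t \mapsto D(t, c_1)$), it suffices to verify positive semidefiniteness at the two endpoints $t = 0$ and $t = c_1$.

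For $t = 0$, I would argue that $D(0, c_1)$ is diagonally dominant with non-negative diagonal entries, hence positive semidefinite by Gershgorin's theorem. In each interior row $k$ with $3 \leq k \leq N-1$, the diagonal is $2(2k^2-3k+1)c_1$ while the sum of absolute off-diagonal entries is $(2(k-1)^2-(k-1)-1)c_1 + (2k^2-k-1)c_1 = (4k^2-6k+1)c_1$, and rows $1$, $2$, $N$ admit analogous (easier) checks using that $(2N^2-4N+4) - (2N^2-5N+2) - 1 = N+1 > 0$ for $N\geq 4$.

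For $t = c_1$, scaling out $c_1$ reduces the problem to showing $D(1, 1) \succeq 0$. Because the $(1, N)$ entry $t - c_1$ vanishes at $t = c_1$, the first row and column of $D(1,1)$ are zero apart from the $(1,1)$ entry $2$; thus $D(1, 1)$ splits as $2 \oplus M$, and only $M \succeq 0$ needs verification. To establish this, I would perform elementary row operations echoing those of Lemma \ref{Lemma1}: add row $k$ to row $k+1$ for $k = 2, \ldots, N-2$, while simultaneously adding appropriate multiples of each row to the last row in order to eliminate its left-of-diagonal entries. The identity $\beta_{k-1} + \alpha_k + \beta_k = 0$ at $t = c_1$, which is straightforward from $\alpha_k = 4k^2-10k+3$ and $\beta_k = -(2k^2-3k-1)$, causes the tridiagonal core to telescope into upper triangular form. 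Since row operations of this kind preserve leading principal minors, it suffices to check that the resulting diagonal entries are all strictly positive.

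The main obstacle, exactly as in Lemma \ref{Lemma1}, is to control the final $(N, N)$ diagonal entry once it has absorbed the residuals from eliminating its row. This leads to a combinatorial inequality of the form $\alpha_N - \frac{N^2}{(N-1)^2} - \sum_{i=4}^{N-1} \tfrac{(i-c)^2}{2i^2-3i-1} > 0$ for all $N \geq 4$, which I would dispatch using a bound of the type $\tfrac{(i-c)^2}{2i^2-3i-1} \leq \tfrac{1}{2} + O(1/i^2)$ in the spirit of the estimate \eqref{Lem_inq_pos}; the extra $-\tfrac{N^2}{(N-1)^2}$ term (absent in Lemma \ref{Lemma1}) stays bounded by $2$ for $N \geq 4$, so the surplus margin already present in the Lemma \ref{Lemma1} calculation is more than enough to absorb it. The remaining diagonal entries produced by the reduction, indexed $2,\ldots, N-1$, are manifestly positive, completing the argument.
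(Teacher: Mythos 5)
Your plan reproduces the paper's proof essentially step for step: convexity of $\{t: D(t,c_1)\succeq 0\}$ reduces the claim to the endpoints, $D(0,c_1)$ is handled by diagonal dominance, and $D(c_1,c_1)=c_1D(1,1)$ is brought to upper-triangular form by the same telescoping row operations driven by the identity $\beta_{k-1}+\alpha_k+\beta_k=0$ at $t=c_1$, with positivity of the last diagonal entry controlled via the estimate \eqref{Lem_inq_pos}. The only cosmetic differences are that the paper checks $N=4$ separately before running the elimination for $N\geq 5$, and its final diagonal entry carries the explicit residual $\tfrac{(2N^2-8N+9)^2}{2N^2-7N+4}$ from the last elimination step (the analogue of $\tfrac{(N-1)^2}{3N-5}$ in Lemma \ref{Lemma1}) rather than a generically bounded correction, but the argument is the same.
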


\begin{proof}
The argument proceeds in the same manner as in Lemma \ref{Lemma1}. Due to the convexity of  $\{t: D(t, c_1)\succeq 0\}$, is sufficient to establish the  positive semidefiniteness of $D(0, c_1)$ and $D(c_1, c_1)$. As $D(0, c_1)$ is diagonally dominant, it is positive semidefinite. Next, we proceed to demonstrate the positive definiteness of the matrix $K = D(1,1)$ by computing its leading principal minors. One can show that the claim holds for $N=4$. So we investigate $N\geq 5$. To accomplish this, we perform the following elementary row operations on matrix $D$:
\begin{enumerate}[i)]
  \item
   Add the second row to the third row;
   \item
    Add the second row to the last row;
  \item
  Add the third row to the forth row;
  \item
  For $i=4:N-2$
  \begin{itemize}
    \item
     Add $i-th$ row to $(i+1)-th$ row;
    \item
     Add $\tfrac{3-i}{2i^2-3i-1}$ times of $i-th$ row to the last row;
  \end{itemize}
  \item
  Add $\frac{2N^2-8N+9}{2N^2-7N+4}$ times of $(N-1)-th$ row to $N-th$ row.
\end{enumerate}
By executing these operations, we transform $K$ into an upper triangular matirx $J$ with diagonal
\[
J_{k, k}=\begin{cases}
  2, & {k=1}\\
  2k^2-3k-1, & {2\leq k\leq N-1}\\
  2N^2- 7N + 8 - \frac{N^2}{(N - 1)^2} -\frac{(2N^2-8N+9)^2}{2N^2-7N+4}-\sum_{i=4}^{N-2}\tfrac{(i-3)^2}{2i^2-3i-1}, & {k=N}. \end{cases}
\]
 It is seen all first $(N-1)$ diagonal elements of $J$ are positive. We show that $J_{N, N}$ is also positive.  By using inequality \eqref{Lem_inq_pos}, we get
\begin{align*}
   &2N^2- 7N + 8 - \frac{N^2}{(N - 1)^2} -\frac{(2N^2-8N+9)^2}{2N^2-7N+4}-\sum_{i=4}^{N-2}\frac{(i-3)^2}{2i^2-3i-1}\geq\\
   &2N^2- 7N + 8 - \tfrac{25}{16} -(2N^2-8N+9)-\tfrac{N-5}{2}-1+\tfrac{2}{N-3}\geq \tfrac{N}{2}-\tfrac{17}{16}>0,
\end{align*}
for $N\geq 5$, which implies $J_{N, N}>0$. Hence,  $ D(c_1, c_1)\succeq 0$ and the proof is complete.
\end{proof}
\end{document}